\journal{Journal}
\newcommand{\G}{{\mathfrak{g}}}
\newcommand{\h}{{\mathfrak{h}}}
\newcommand{\ad}{{\mathrm{ad}}}
\newcommand{\Ad}{{\mathrm{Ad}}}
\font\bb=msbm10
\def\R{\hbox{\bb R}}
\newtheorem{theorem}{Theorem}[section]
\newtheorem{lemma}[theorem]{Lemma}
\newtheorem{definition}[theorem]{Definition}
\newtheorem{example}[theorem]{Example}
\newtheorem{proposition}[theorem]{Proposition}
\newtheorem{remark}[theorem]{Remark}
\newtheorem{corollary}[theorem]{Corollary}
\numberwithin{equation}{section}
\newcommand{\dreqno}{\let\veqno\eqno}
\newcounter{cases}
\newcounter{subcases}[cases]
\begin{document}	
	\begin{frontmatter}
		
		\title{Invariant Poisson Structures on Homogeneous Manifolds: Algebraic Characterization, Symplectic Foliation, and Contravariant Connections\vskip 0.5cm \today}
		
		\author[label1]{Abdelhak Abouqateb}
		\address[label1]{Cadi Ayyad University, UCA, Faculty of Sciences and Technologies, Department of Mathematics B.P.549 Gueliz Marrkesh Morocco}
		\ead{a.abouqateb@uca.ac.ma}
		
		\author[label2]{Charif Bourzik}
		\address[label2]{Higher Institute of Nursing and Health Techniques Guelmim Morocco}
		\ead{bourzikcharif@gmail.com}
		
		
		\begin{abstract} 
	 In this paper, we study invariant Poisson structures on homogeneous manifolds, which serve as a natural generalization of homogeneous symplectic manifolds previously explored in the literature. Our work  begins by providing an algebraic characterization of invariant Poisson structures on homogeneous manifolds. More precisely, we establish a connection between these structures and solutions to a specific type of classical Yang-Baxter equation. This leads us to explain a bijective correspondence between invariant Poisson tensors and class of Lie subalgebras: For a connected Lie group  $G$  with lie algebra $\mathfrak{g}$, and $H$ a connected closed subgroup with Lie algebra $\h$, we demonstrate that the class of $G$-invariant Poisson tensors on $G/H$ is in bijective correspondence with the class of Lie subalgebras $\mathfrak{a} \subset \mathfrak{g}$ containing $\mathfrak{h}$, equipped with a $2$-cocycle $\omega$ satisfying $\mathrm{Rad}(\omega) = \mathfrak{h}$. Then, we explore numerous examples of invariant Poisson structures, focusing on reductive and symmetric pairs. Furthermore, we show that the symplectic foliation associated with invariant Poisson structures consists of homogeneous symplectic manifolds. Finally, we investigate invariant contravariant connections on homogeneous spaces endowed with invariant Poisson structures. This analysis extends the study by K. Nomizu of invariant covariant connections on homogeneous spaces.\\
	\noindent{\bf Mathematics Subject Classification 2020:} 
	 17B45, 22F30, 53D17, 70G45.\\
	  {\bf Keywords:}  Lie groups, Homogeneous spaces, Poisson manifolds, Poisson connections.
		\end{abstract}
	
	\end{frontmatter}
	\section{Introduction}
The study of invariant symplectic structures on homogeneous manifolds has motivated numerous significant contributions in the literature; see, for instance, \cite{Baues, CHU, Boothby,Guan}. Poisson manifolds, which constitute a larger class than the symplectic manifolds, play a fundamental role in modern geometry and have been extensively developed in various works \cite{ VAISMAN, Dufour, Camille}. Given this context, it is natural to investigate invariant Poisson structures on homogeneous manifolds, which is the primary focus of this paper. More precisely, we provide a precise algebraic characterization of invariant Poisson structures on homogeneous manifolds and demonstrate that the symplectic leaves of an invariant Poisson structure are themselves homogeneous symplectic manifolds. Additionally, we explore in this context a precise description of invariant contravariant connections which are an important tool in Poisson geometry as explained in \cite {FERNANDES}. The motivation for this work stems from the study of invariant Koszul-Vinberg structures \cite{abb}, which share many similarities with Poisson structures despite arising in distinct contexts. While Poisson structures concern antisymmetric contravariant tensors, Koszul-Vinberg structures involve symmetric contravariant tensors on affine manifolds. This duality highlights the rich interaction between these two frameworks and underlines the importance of understanding these structures on homogeneous manifolds.
 
A Poisson structure on a manifold $M$ is a skew-symmetric bivector field $\pi\in\Gamma(\wedge^2 TM)$ satisfying, $[\pi,\pi]_{S}=0$, where $[\cdot,\cdot]_{S}$ is the Schouten-Nijenhuis bracket, such bivector field induce a Lie bracket on $\Omega^1(M)$ 
	\begin{equation*}
		[\alpha,\beta]_{\pi}=\mathcal{L}_{\alpha^{\#}}\beta-\mathcal{L}_{\beta^{\#}}\alpha-d\pi(\alpha,\beta),
	\end{equation*}
where $\alpha^{\#}$ is the vector field on $M$ given by $\langle\beta,\alpha^{\#}\rangle=\pi(\alpha,\beta)$.  
	
We recall that a $G$-homogeneous space $M$ is a manifold on which a Lie group $G$ acts smoothly and transitively, in a such case, there exists a closed subgroup $H$ of $G$ such that the quotient $G/H$ is identified with $M$ through a $G$-equivariant diffeomorphism (the subgroup $H$ is the isotropy subgroup at a point $o\in M$). For any $g\in G$, we will denote by  $\lambda_g:G/H\to G/H$ the diffeomorphism given by $\lambda_g(aH)=gaH$ for $a\in G$.
A {\it{$G$-invariant Poisson structure}} on a $G$-homogeneous manifold $G/H$ is a Poisson structure $\pi$ on $G/H$  such that the action of $G$ on $G/H$ preserves the bivector field $\pi$. This means that  for any $g \in G$ and one forms $\alpha, \beta \in \Omega^{1}(G/H)$,
$$
\pi\left( \lambda_{g}^*
\alpha,\lambda_{g}^*
\beta\right)=\pi(\alpha, \beta) \circ \lambda_{g} .$$
In other words, if we denote by $\pi_{\#}:T^*M\rightarrow TM,\alpha\mapsto \alpha^{\#}$ the bundle map given by $\pi_{\#}(\alpha)=\alpha^{\#}$, then the bivector field $\pi$ is $G$-invariant if and only if for any $g \in G,$   $$\pi_{\#, \bar{g}}=T_{\bar{e}}\left(\lambda_{g}\right) \circ \pi_{\#, \bar{e}} \circ\left(T_{\bar{e}}\left(\lambda_{g}\right)\right)^{*},$$
where $\bar{g}=gH$ and $\bar{e}=H$.
This implies that $\pi$ is a regular Poisson structure, i.e. all associated symplectic leaves have the same dimension.

\noindent 
	
	  We will now elucidate the connection between  this work and the concept of {\it {Poisson homogeneous  spaces}} as studied in the existing literature. Indeed,  recall that a  {\it {Poisson Lie}} $(G,\pi_G)$ is a Lie group endowed with a Poisson structure $\pi_G$ such that the multiplication map $G\times G\rightarrow G$ is a Poisson map, where $G\times G$ is equipped with the product Poisson structures. An action of a Poisson Lie group $(G,\pi_G)$ on a Poisson manifold $(M,\pi)$  is called a {\it {Poisson action}} if the action map $G \times M \rightarrow M$ is a Poisson map, where $G \times M$ is equipped with the product Poisson structure. A  {\it {Poisson homogeneous space}} is a Poisson manifold $M$ on which acts transitively a Poisson Lie group $G$ such that the action $G \times M \rightarrow M$ is Poisson action. Such structures have been extensively studied in \cite{Lu,Diatta,Drinfeld}. In this paper, we focus on a specific class of Poisson homogeneous spaces, those where $\pi_G$ is the trivial Poisson structure. Surprisingly, to our knowledge, a comprehensive study of such structures has not yet been undertaken in the literature.  One motivation for investigating these structures lies in their potential to serve as a natural generalization of symplectic homogeneous spaces, as hinted at earlier in this discussion.

	We now provide an overview of the main structural sections of this paper.

	Section \ref{Section2} is devoted to reviewing some basic facts about homogeneous spaces by specifying the notations that will be used throughout the paper.
	
	 In Section \ref{Section3}, we investigate $G$-invariant Poisson bivector fields on homogeneous manifold of the form $G/H$. Specifically, we establish a one-to-one correspondence between such Poisson structures and what we will refer to (cf. Definition \ref{matrices}) as $r$-matrices of the pair $(\G,\mathfrak{h})$ where $\G$ (resp. $\mathfrak{h}$) is the Lie algebra of $G$ (resp. of $H$). Indeed, if $\pi$ is a $G$-invariant skew-symmetric bivector field on $G/H$ and if $r\in\wedge^{2}(\mathfrak{g}/\mathfrak{h})$ is  its associated $\Ad(H)$-invariant bivector,
then we will associate a tensor $\llbracket r,r\rrbracket\in\otimes^3(\mathfrak{g}/\mathfrak{h})$ such that $	\llbracket r,r\rrbracket =0$ if and only if $\pi$ is a Poisson tensor on $G/H$ (cf. Theorem \ref{Poisson theorem}). This leads us to show that there exists a bijective correspondence between invariant Poisson tensors on $G/H$ and class of Lie subalgebras: For a connected Lie group  $G$  with lie algebra $\mathfrak{g}$, and $H$ a connected closed subgroup with Lie algebra $\h$, we demonstrate that the class of $G$-invariant Poisson tensors on $G/H$ is in bijective correspondence with the class of Lie subalgebras $\mathfrak{a} \subset \mathfrak{g}$ containing $\mathfrak{h}$, equipped with a $2$-cocycle $\omega$ satisfying $\mathrm{Rad}(\omega) = \mathfrak{h}$ (cf. Corollary \ref{description}).\\
Of particular interest is the case where $G/H$ is reductive space, i.e., there exists a vector space decomposition $\mathfrak{g}=\mathfrak{h}\oplus\mathfrak{m}$ such that $Ad(H)(\mathfrak{m})=\mathfrak{m}$. In this setting, we provide a one-to-one correspondence between
 $G$-invariant Poisson bivector fields on $G/H$ and a class of $\Ad(H)$-invariant skew-symmetric bivectors $r \in \wedge^{2} \mathfrak{m}$ (Theorem \ref{Poisson reductive}). Notably, when $(G,H)$ is symmetric pair, 
we show that this class is precisely the vector space $\left(\wedge^{2}\mathfrak{m}\right)^{\mathrm{Ad}(H)}$. As applications, we present twos illustrative examples.  In the first example, we prove that there does not exist any non-trivial $\mathrm{GL}_{n}^{+}(\mathbb{R})$-invariant Poisson structure on  the space $\mathcal{S}^{++}_{n}(\mathbb{R})$ of real symmetric positive definite $n\times n$-matrices in the case when $n=4k$, or $n=4k+3$ (Theorem \ref{S_n}).  In the second example (\ref{Grassman manifold}), we compute all $\mathrm{SO}_{4}(\mathbb{R})$-invariant Poisson structures on the oriented Grassmann manifold $\mathrm{G}^{+}_{2}(\mathbb{R}^4)$.

 Moreover, inspired by Nomizu's theorem on invariant affine connections \cite{Nomizu}, in Section \ref{Section4} we investigate $G$-invariant contravariant connections (in the meaning of \cite{FERNANDES}).
More precisely, we were interested in giving an algebraic characterization of such structure on homogeneous manifold  with an invariant Poisson structure. Our main result in this part will be Theorem \ref{ILCC} where we give a one-to-one correspondence between $G$-invariant contravariant connections on $(G/H,\pi)$ and  $\mathrm{Ad}(H)$-invariant bilinear maps $\mathfrak{b}:\mathfrak{m}^*\times\mathfrak{m}^* \rightarrow\mathfrak{m}^*$. 
This process is a powerful algebraic tool for constructing contravariant connections on homogeneous manifolds with an invariant Poisson structure.

  In Section \ref{Section5}, we give a geometric description of the regular symplectic foliation induced by an invariant Poisson structure on $G/H$. We show that the symplectic leaves are homogeneous symplectic manifolds in the sense of \cite{CHU}.

\section{Preliminaries and Notations}\label{Section2}
 We will adopt the same notations and terminology as \cite{abb} but in order to have a self contained document we specify the main tools. Throughout this paper, $G$ will be a connected Lie group with Lie algebra $\mathfrak{g}$, and $H$ a closed subgroup of $G$ with Lie algebra $\mathfrak{h}$, $M:=G / H$. Denote  by 
 $$p: G \rightarrow M, g\mapsto p(g)=\bar{g}:=g H\ ;\;\; q:\mathfrak{g}\rightarrow\mathfrak{g}/\mathfrak{h},\; u\mapsto u+\mathfrak{h}$$ the canonical projections and $\bar{e}:=H$. The action of $G$ on $M$ is defined as follows,
\begin{equation}\label{HB}
	\lambda: G \times M \rightarrow M, \quad\left(g, \overline{g^{\prime}}\right) \mapsto g \cdot \overline{g^{\prime}}=\lambda_{g}\left(\overline{g^{\prime}}\right)=\overline{g g^{\prime}} 
\end{equation}
Furthermore, the tangent linear map $T_{e} p: \mathfrak{g} \rightarrow T_{\bar{e}} M$ is surjective, inducing a linear isomorphism
\begin{equation}
	\Phi_{e}: \mathfrak{g}/\mathfrak{h} \xrightarrow{\cong} T_{\bar{e}} M, \quad \Phi_{e}(u+\mathfrak{h})=T_{e} p(u). 
\end{equation}
Generally, for any $g \in G$,
\begin{equation}
	\Phi_{g}: \mathfrak{g}/\mathfrak{h} \xrightarrow{\cong} T_{\bar{g}} M, \quad \Phi_{g}(u+\mathfrak{h})=T_{\bar{e}} \lambda_{g} \circ T_{e} p(u). 
\end{equation}
This leads to the bundle isomorphism 
\[
\Phi: G \times_H {\mathfrak{g}/\mathfrak{h}}  \xrightarrow{\cong} TM, \quad[g, u+\mathfrak{h}] \mapsto \Phi_{g}(u+\mathfrak{h}),
\]
where $G \times_H {\mathfrak{g}/\mathfrak{h}}$ is the orbit space of $G \times \mathfrak{g}/\mathfrak{h}$ under the right action of $H$ given by $(g, u+\mathfrak{h}) \cdot a=\left(g a, \operatorname{Ad}_{a^{-1}}(u)+\mathfrak{h}\right)$. In other words, the tangent bundle $TM$ is identified with the vector bundle associated to the principal bundle $p:G\to G/H$ and the linear representation
$\overline{\Ad}:H\rightarrow\mathrm{GL}(\mathfrak{g}/\mathfrak{h})$ given by
$$
\overline{\Ad}_a(u+\mathfrak{h})=\Ad_a(u)+\mathfrak{h},\quad \text{for}\;\; a\in H.
$$
The associated dual representation   $H\rightarrow\mathrm{GL}((\mathfrak{g}/\mathfrak{h})^*)$ is defined by
$
 a\cdot \alpha= \overline{\Ad}_{a^{-1}}^{*}\alpha$ for $\alpha\in (\mathfrak{g}/\mathfrak{h})^*$, and its derivative representation  $\mathfrak{h}\rightarrow\mathrm{End}((\mathfrak{g}/\mathfrak{h})^*)$ is given by
  	$
  u\cdot \alpha=- \overline{\mathrm{ad}}_{u}^{*} \alpha$, for $u\in \mathfrak{h}$, i.e., $\langle u\cdot \alpha,v+\mathfrak{h} \rangle=-\langle \alpha,[u,v]+\mathfrak{h} \rangle$ for any $v\in \G$.
\subsection*{Invariant skew-symmetric bivector fields}
Let $\pi$ be a skew-symmetric bivector field on $M$ and denote by $\pi_{\#}:T^*M\rightarrow TM,\alpha\mapsto \alpha^{\#}$ the bundle map given by  $\langle\beta,\alpha^{\#}\rangle:=\pi(\alpha,\beta)$. 
Then it is easy to see that the following conditions are equivalent:
\begin{enumerate}
	\item $\pi$ is $G$-invariant, i.e., for any $g \in G$ and one forms $\alpha, \beta \in \Omega^{1}(M)$,
	$$
	\pi\left( \lambda_{g}^*
	\alpha,\lambda_{g}^*
	\beta\right)=\pi(\alpha, \beta) \circ \lambda_{g} .$$
	\item For any $g \in G$,   $\pi_{\#, \bar{g}}=T_{\bar{e}}\left(\lambda_{g}\right) \circ \pi_{\#, \bar{e}} \circ\left(T_{\bar{e}}\left(\lambda_{g}\right)\right)^{*}$.
	\item For any $u \in \mathfrak{g}$, $\mathcal{L}_{u^{*}} \pi=0$. \\ Here
	$u^*\in \Gamma(TM)$ is the fundamental vector field induced by $\exp(-tu)$, which is a vector field on $M$, associated to $u\in \mathfrak{g}$.
\end{enumerate}
Recall that there is a one-to-one correspondence between $G$-invariant skew-symmetric bivector fields on $M$ and $\Ad(H)$-invariant skew-symmetric bivector $r \in \wedge^{2}(\mathfrak{g}/\mathfrak{h})$. Here the $\Ad(H)$-invariance of the bivector $r$ means that one of the following equivalent conditions is satisfied:
\begin{enumerate}
	\item For any $\alpha,\beta\in(\mathfrak{g}/\mathfrak{h})^{*}$ and $a \in H,$
	\begin{equation}\label{Carac1}
		r(\overline{\mathrm{Ad}}_{a}^{*}\alpha,\overline{\mathrm{Ad}}_{a}^{*}\beta)=
		r(\alpha,\beta),
	\end{equation}
or equivalently
\begin{equation}\label{Carac2}
	(\overline{\mathrm{Ad}}_{a}^{*}\alpha)^{\#}=	\overline{\mathrm{Ad}}_{a^{-1}}(\alpha^{\#}),
\end{equation}where $\alpha^{\#}\in\mathfrak{g}/\mathfrak{h}$  is given by $\langle \beta,\alpha^{\#}\rangle:=r(\alpha,\beta).$
	\item If $H$ is connected, for any $\alpha,\beta\in(\mathfrak{g}/\mathfrak{h})^{*}$ and $u \in \mathfrak{h}$,
	\begin{equation}\label{Cara3}
		r(\overline{\mathrm{ad}}_{u}^{*}\alpha, \beta)+r( \alpha,\overline{\mathrm{ad}}_{u}^{*}\beta)=0, 
	\end{equation} 
or equivalently,
    	\begin{equation}\label{Cara4}
    	(\overline{\mathrm{ad}}_{u}^{*}\alpha)^{\#}=-\overline{\mathrm{ad}}_{u}(\alpha^{\#}). 
    \end{equation} 
\end{enumerate}

\section{Invariant Poisson structures on $G/H$}\label{Section3}

Let $\pi$ be a $G$-invariant skew-symmetric bivector field on $G/H$ and $r\in(\wedge^{2}(\mathfrak{g}/\mathfrak{h}))^{\Ad(H)}$ its associated $\Ad(H)$-invariant bivector. Let $\widetilde{r}\in\wedge^{2}\mathfrak{g}$ be any bivector satisfying $\wedge^{2}q (\widetilde{r})=r$, this means that for any $\bar{\eta},\bar{\xi}\in(\mathfrak{g}/\mathfrak{h})^*$, we have $\widetilde{r}(q^*\bar{\eta},q^*\bar{\xi})=r(\bar{\eta},\bar{\xi})$. \\ For any $\eta\in\mathfrak{g}$ denote by $\eta^{\widetilde{\#}}$ the vector in $\mathfrak{g}$ given by $\langle \xi, \eta^{\widetilde{\#}}\rangle:=\widetilde{r}(\eta,\xi),\,\forall\xi\in\mathfrak{g}^*$.  In what follows,  $\mathfrak{h}^{\circ}$ will be the annihilator of $\mathfrak{h}$ in $\mathfrak{g}^*$, i.e., the set of linear functionals on $\mathfrak{g}$ which vanish on $\mathfrak{h}$. A canonical isomorphism from $(\mathfrak{g}/\mathfrak{h})^*$ to $\mathfrak{h}^{\circ}$  is given by $\alpha\mapsto q^*(\alpha)$.

\begin{lemma}
	For any $a\in H$ and $\eta,\xi\in\mathfrak{h}^{\circ}$ we have 
	\begin{eqnarray}
		\widetilde{r}(\mathrm{Ad}_{a}^{*}\eta,\mathrm{Ad}_{a}^{*}\xi)=\widetilde{r}(\eta,\xi),\label{Ad invariance 1}\\
		(\mathrm{Ad}_{a}^{*}\eta)^{\widetilde{\#}}-\mathrm{Ad}_{a^{-1}}(\eta^{\widetilde{\#}})\in\mathfrak{h},\label{Ad invariance 2}
	\end{eqnarray}
and for any $u\in\mathfrak{h}$,
\begin{eqnarray}
\widetilde{r}(\mathrm{ad}_{u}^{*}\eta,\xi)+\widetilde{r}(\eta,\mathrm{ad}_{u}^{*}\xi)=0,\label{ad invariance 1}\\	(\mathrm{ad}_{u}^{*}\eta)^{\widetilde{\#}}+\mathrm{ad}_{u}(\eta^{\widetilde{\#}})\in\mathfrak{h}.\label{ad invariance 2}
\end{eqnarray}
\end{lemma}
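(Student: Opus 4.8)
The plan is to reduce all four identities to the single defining relation $\widetilde{r}(q^*\alpha,q^*\beta)=r(\alpha,\beta)$ together with the $\Ad(H)$-invariance of $r$, and then to obtain the two infinitesimal statements by differentiating the two group statements. The first thing I would record is the equivariance of the canonical isomorphism $q^*\colon(\mathfrak{g}/\mathfrak{h})^*\to\mathfrak{h}^{\circ}$. Since $a\in H$ implies $\Ad_a(\mathfrak{h})=\mathfrak{h}$, the projection intertwines the two adjoint actions, $q\circ\Ad_a=\overline{\Ad}_a\circ q$, and dualizing gives $\Ad_a^*\circ q^*=q^*\circ\overline{\Ad}_a^*$. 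In particular $\Ad_a^*$ preserves $\mathfrak{h}^{\circ}$ for every $a\in H$, so if $\eta=q^*\alpha$ and $\xi=q^*\beta$ lie in $\mathfrak{h}^{\circ}$, then $\Ad_a^*\eta=q^*(\overline{\Ad}_a^*\alpha)$ and $\Ad_a^*\xi=q^*(\overline{\Ad}_a^*\beta)$ again lie in $\mathfrak{h}^{\circ}$.

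With this in hand, \eqref{Ad invariance 1} is immediate: by the defining relation, $\widetilde{r}(\Ad_a^*\eta,\Ad_a^*\xi)=\widetilde{r}(q^*\overline{\Ad}_a^*\alpha,q^*\overline{\Ad}_a^*\beta)=r(\overline{\Ad}_a^*\alpha,\overline{\Ad}_a^*\beta)$, and the $\Ad(H)$-invariance \eqref{Carac1} of $r$ turns the right-hand side into $r(\alpha,\beta)=\widetilde{r}(\eta,\xi)$. For \eqref{Ad invariance 2} I would test the claimed difference against the annihilator: because $\mathfrak{h}=(\mathfrak{h}^{\circ})^{\circ}$, it suffices to check that $\langle\zeta,(\Ad_a^*\eta)^{\widetilde{\#}}\rangle=\langle\zeta,\Ad_{a^{-1}}(\eta^{\widetilde{\#}})\rangle$ for all $\zeta\in\mathfrak{h}^{\circ}$. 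Writing $\zeta=\Ad_a^*(\Ad_{a^{-1}}^*\zeta)$ with $\Ad_{a^{-1}}^*\zeta\in\mathfrak{h}^{\circ}$ and applying \eqref{Ad invariance 1} moves $\Ad_a^*$ off the first slot, giving $\widetilde{r}(\Ad_a^*\eta,\zeta)=\widetilde{r}(\eta,\Ad_{a^{-1}}^*\zeta)=\langle\Ad_{a^{-1}}^*\zeta,\eta^{\widetilde{\#}}\rangle=\langle\zeta,\Ad_{a^{-1}}(\eta^{\widetilde{\#}})\rangle$, which is exactly the required identity.

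The infinitesimal statements then follow by differentiating along $a=\exp(tu)$ with $u\in\mathfrak{h}$ at $t=0$, using $\frac{d}{dt}\big|_{t=0}\Ad_{\exp(tu)}^*=\ad_u^*$ and $\frac{d}{dt}\big|_{t=0}\Ad_{\exp(tu)}=\ad_u$. Differentiating \eqref{Ad invariance 1} yields \eqref{ad invariance 1} at once. For \eqref{ad invariance 2} the key point is that the whole curve $t\mapsto(\Ad_{\exp(tu)}^*\eta)^{\widetilde{\#}}-\Ad_{\exp(-tu)}(\eta^{\widetilde{\#}})$ lies in the fixed linear subspace $\mathfrak{h}$ by \eqref{Ad invariance 2}, hence so does its derivative at $t=0$, which computes (by linearity of $\widetilde{\#}$) to $(\ad_u^*\eta)^{\widetilde{\#}}+\ad_u(\eta^{\widetilde{\#}})$.

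I do not expect a serious obstacle here; the only thing to watch is that $\widetilde{r}$, and hence $\eta^{\widetilde{\#}}$, depends on the chosen lift of $r$. However, \eqref{Ad invariance 1} and \eqref{ad invariance 1} only involve $\widetilde{r}$ evaluated on $\mathfrak{h}^{\circ}\times\mathfrak{h}^{\circ}$, where it agrees with $r$, while \eqref{Ad invariance 2} and \eqref{ad invariance 2} are membership-in-$\mathfrak{h}$ assertions; since two admissible lifts differ by a bivector killing $\mathfrak{h}^{\circ}\times\mathfrak{h}^{\circ}$, the resulting ambiguity in $\eta^{\widetilde{\#}}$ for $\eta\in\mathfrak{h}^{\circ}$ lands precisely in $(\mathfrak{h}^{\circ})^{\circ}=\mathfrak{h}$, so all four statements are well posed and the argument is insensitive to the choice of $\widetilde{r}$.
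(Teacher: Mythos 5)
Your proposal is correct and follows essentially the same route as the paper: both reduce (\ref{Ad invariance 1}) to the defining relation $\widetilde{r}(q^*\cdot,q^*\cdot)=r(\cdot,\cdot)$ together with the intertwining $\mathrm{Ad}_a^*\circ q^*=q^*\circ\overline{\mathrm{Ad}}_a^*$ and the invariance of $r$, and both obtain the infinitesimal identities by differentiating along $\exp(tu)$. The only cosmetic difference is in (\ref{Ad invariance 2}), where the paper applies $q$ to both sides and invokes the characterization (\ref{Carac2}), while you pair against $\mathfrak{h}^{\circ}$ and deduce it from (\ref{Ad invariance 1}) via $(\mathfrak{h}^{\circ})^{\circ}=\mathfrak{h}$ — an equivalent argument.
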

\begin{proof}
	Let $a\in H$, $\eta,\xi\in\mathfrak{h}^{\circ}$ and $\bar{\eta},\bar{\xi}\in(\mathfrak{g}/\mathfrak{h})^*$ such that $q^*\bar{\eta}=\eta$ and $q^*\bar{\xi}=\xi$. For identity (\ref{Ad invariance 1}), just write: 
	\begin{align*}
	\displaystyle	\widetilde{r}(\mathrm{Ad}_{a}^{*}\eta,\mathrm{Ad}_{a}^{*}\xi)
		&=	\widetilde{r}(\mathrm{Ad}_{a}^{*}q^*\bar{\eta},\mathrm{Ad}_{a}^{*}q^*\bar{\xi})\\
		&=\widetilde{r}(q^*\overline{\mathrm{Ad}}_{a}^{*}\bar{\eta},q^*\overline{\mathrm{Ad}}_{a}\bar{\xi})\\
		&=r(\overline{\mathrm{Ad}}_{a}^{*}\bar{\eta},\overline{\mathrm{Ad}}_{a}^{*}\bar{\xi})\\
		&\stackrel{(\ref{Carac1})}{=}r(\bar{\eta},\bar{\xi})
		\\
		&=\widetilde{r}(\eta,\xi).
	\end{align*}
For  (\ref{Ad invariance 2}), it follows from the following:
	\begin{align*}
		q((\mathrm{Ad}_{a}^{*}\eta)^{\widetilde{\#}})&=	q((\mathrm{Ad}_{a}^{*}q^*\bar{\eta})^{\widetilde{\#}})\\
		&=q((q^*\overline{\mathrm{Ad}}_{a}^{*}\bar{\eta})^{\widetilde{\#}})\\
		&=(\overline{\mathrm{Ad}}_{a}^{*}\bar{\eta})^{\#}\\
		&\stackrel{(\ref{Carac2})}{=}\overline{\mathrm{Ad}}_{a^{-1}}(\bar{\eta}^{\#})\\
		&= \overline{\mathrm{Ad}}_{a^{-1}}(q(\eta^{\widetilde{\#}}))\\
		&=q(\mathrm{Ad}_{a^{-1}}(\eta^{\widetilde{\#}})).
	\end{align*}
Equations (\ref{ad invariance 1}) and (\ref{ad invariance 2}) are a direct consequences of (\ref{Ad invariance 1}) and (\ref{Ad invariance 2}) respectively.
\end{proof}

Define a bracket $[\cdot,\cdot]_{\widetilde{r}}$ on $\mathfrak{g}^{*}$  by setting 
\begin{equation}\label{Bracket}
	\displaystyle
	[\eta,\xi]_{\widetilde{r}}:=-\ad_{\eta^{\widetilde{\#}}}^{*}\xi+\ad_{\xi^{\widetilde{\#}}}^{*}\eta.
\end{equation} 
\begin{lemma} For any $\eta,\xi\in\mathfrak{h}^{\circ}$, we have
	\begin{equation}\label{rbracket}
	[\eta,\xi]_{\widetilde{r}}\in\mathfrak{h}^{\circ},
	\end{equation} 
\end{lemma}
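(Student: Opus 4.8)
The plan is to verify membership in $\mathfrak{h}^{\circ}$ directly: it suffices to check that $\langle [\eta,\xi]_{\widetilde{r}}, u\rangle = 0$ for every $u \in \mathfrak{h}$. Expanding the definition \eqref{Bracket} and using the coadjoint-action convention $\langle \ad_{X}^{*}\alpha, v\rangle = \langle \alpha, [X,v]\rangle$ fixed in Section~\ref{Section2}, this pairing becomes
$$\langle [\eta,\xi]_{\widetilde{r}}, u\rangle = -\langle \xi, [\eta^{\widetilde{\#}}, u]\rangle + \langle \eta, [\xi^{\widetilde{\#}}, u]\rangle .$$
Thus the whole statement reduces to controlling the two terms $\langle \xi, [\eta^{\widetilde{\#}}, u]\rangle$ and $\langle \eta, [\xi^{\widetilde{\#}}, u]\rangle$.

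The key idea is that each of these terms can be rewritten, \emph{up to a harmless error living in} $\mathfrak{h}$, in terms of $\widetilde{r}$ and $\ad_{u}^{*}$. Indeed, writing $[\eta^{\widetilde{\#}}, u] = -\ad_{u}(\eta^{\widetilde{\#}})$ and invoking \eqref{ad invariance 2}, which states that $(\ad_{u}^{*}\eta)^{\widetilde{\#}} + \ad_{u}(\eta^{\widetilde{\#}}) \in \mathfrak{h}$, I can substitute $-(\ad_{u}^{*}\eta)^{\widetilde{\#}}$ for $\ad_{u}(\eta^{\widetilde{\#}})$ modulo $\mathfrak{h}$. This is exactly where the hypothesis $\xi \in \mathfrak{h}^{\circ}$ earns its keep: pairing with $\xi$ annihilates the $\mathfrak{h}$-valued error, leaving $\langle \xi, [\eta^{\widetilde{\#}}, u]\rangle = \langle \xi, (\ad_{u}^{*}\eta)^{\widetilde{\#}}\rangle = \widetilde{r}(\ad_{u}^{*}\eta, \xi)$ by the defining property of $\widetilde{\#}$. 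The symmetric manipulation with $\eta \in \mathfrak{h}^{\circ}$ gives $\langle \eta, [\xi^{\widetilde{\#}}, u]\rangle = \widetilde{r}(\ad_{u}^{*}\xi, \eta) = -\widetilde{r}(\eta, \ad_{u}^{*}\xi)$ after using the skew-symmetry of $\widetilde{r}$.

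Substituting these two identities back yields $\langle [\eta,\xi]_{\widetilde{r}}, u\rangle = -\bigl(\widetilde{r}(\ad_{u}^{*}\eta, \xi) + \widetilde{r}(\eta, \ad_{u}^{*}\xi)\bigr)$, and this vanishes by the infinitesimal invariance \eqref{ad invariance 1}, completing the argument. I expect the main difficulty to be bookkeeping rather than conceptual. Two points demand care: one must keep the coadjoint sign convention consistent with the one under which \eqref{ad invariance 1} and \eqref{ad invariance 2} were established, and one must remember that \eqref{ad invariance 2} only holds \emph{modulo} $\mathfrak{h}$. It is therefore essential that \emph{both} $\eta$ and $\xi$ belong to $\mathfrak{h}^{\circ}$: each annihilator condition is used precisely to kill the $\mathfrak{h}$-ambiguity produced in the term governed by the other vector.
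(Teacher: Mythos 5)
Your proof is correct and follows essentially the same route as the paper's: expand the pairing $\langle[\eta,\xi]_{\widetilde r},u\rangle$ against $u\in\mathfrak h$, use \eqref{ad invariance 2} to replace the brackets by $(\ad_u^*\cdot)^{\widetilde\#}$ modulo $\mathfrak h$ (the annihilator hypotheses on $\eta$ and $\xi$ absorbing the ambiguity), and conclude by \eqref{ad invariance 1}. The only discrepancy is an overall sign in the first displayed line relative to the paper's computation, which is immaterial since the quantity is shown to vanish.
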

\begin{proof} Let $\eta,\xi\in\mathfrak{h}^{\circ}$ and  $u\in\mathfrak{h}$,
	\begin{align*}
	\langle [\eta,\xi]_{\widetilde{r}}, u\rangle &= +\langle \xi, [\eta^{\widetilde{\#}},u]\rangle-\langle \eta, [\xi^{\widetilde{\#}},u]\rangle \\
	&\stackrel{(\ref{ad invariance 2})}{=} -\langle \xi, (\ad_{u}^*\eta)^{\widetilde{\#}}\rangle+\langle \eta, (\ad_{u}^*\xi)^{\widetilde{\#}}\rangle\\
	&= -\widetilde{r}(\mathrm{ad}_{u}^{*}\eta,\xi)-\widetilde{r}(\eta,\mathrm{ad}_{u}^{*}\xi)\\
	&\stackrel{(\ref{ad invariance 1})}{=}0.
	\end{align*}
	Hence $[\eta,\xi]_{\widetilde{r}}\in\mathfrak{h}^{\circ}$.
\end{proof}
\begin{proposition}
	The element $\llbracket r,r\rrbracket\in\otimes^3(\mathfrak{g}/\mathfrak{h})$ given by 
	\begin{equation}
		\llbracket r,r\rrbracket(\eta,\xi,\varepsilon):=
		\langle \varepsilon,[\eta,\xi]_{\widetilde{r}}^{\widetilde{\#}}-[\eta^{\widetilde{\#}},\xi^{\widetilde{\#}}] \rangle,\qquad \forall \eta,\xi,\varepsilon \in \mathfrak{h}^{\circ}
	\end{equation}
	does not depend on $\widetilde{r}$.
\end{proposition}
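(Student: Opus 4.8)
The plan is to prove independence of the lift directly: I take two bivectors $\widetilde{r},\widetilde{r}'\in\wedge^2\mathfrak{g}$ with $\wedge^{2}q(\widetilde{r})=\wedge^{2}q(\widetilde{r}')=r$ and show that the two resulting numbers $\llbracket r,r\rrbracket(\eta,\xi,\varepsilon)$ agree for all $\eta,\xi,\varepsilon\in\mathfrak{h}^{\circ}$. First I would exploit that, since $q^{*}$ identifies $(\mathfrak{g}/\mathfrak{h})^{*}$ with $\mathfrak{h}^{\circ}$, the defining condition $\widetilde{r}(q^{*}\bar\eta,q^{*}\bar\xi)=r(\bar\eta,\bar\xi)$ constrains a lift only through its restriction to $\mathfrak{h}^{\circ}\times\mathfrak{h}^{\circ}$. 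Hence the difference $s:=\widetilde{r}'-\widetilde{r}$ satisfies $s(\eta,\xi)=0$ for all $\eta,\xi\in\mathfrak{h}^{\circ}$. Writing $\delta(\eta):=\eta^{\widetilde{\#}'}-\eta^{\widetilde{\#}}=s^{\#}(\eta)$, the relation $\langle\xi,\delta(\eta)\rangle=s(\eta,\xi)=0$ for every $\xi\in\mathfrak{h}^{\circ}$ forces $\delta(\eta)\in(\mathfrak{h}^{\circ})^{\circ}=\mathfrak{h}$ whenever $\eta\in\mathfrak{h}^{\circ}$.

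This single observation, namely $\delta(\mathfrak{h}^{\circ})\subseteq\mathfrak{h}$, is the engine of the whole proof: in the defining formula every quantity is ultimately paired against $\varepsilon\in\mathfrak{h}^{\circ}$, and $\varepsilon$ annihilates $\mathfrak{h}$, so all the $\delta(\cdot)$-corrections that land in $\mathfrak{h}$ will silently drop out. Concretely I would split the increment $\llbracket r,r\rrbracket'(\eta,\xi,\varepsilon)-\llbracket r,r\rrbracket(\eta,\xi,\varepsilon)$ into the ``bracket of sharps'' part and the ``sharp of bracket'' part. For the former, $[\eta^{\widetilde{\#}'},\xi^{\widetilde{\#}'}]=[\eta^{\widetilde{\#}}+\delta(\eta),\xi^{\widetilde{\#}}+\delta(\xi)]$ expands by bilinearity into the old term plus the three corrections $[\delta(\eta),\xi^{\widetilde{\#}}]$, $[\eta^{\widetilde{\#}},\delta(\xi)]$ and $[\delta(\eta),\delta(\xi)]$. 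For the latter, a direct computation from (\ref{Bracket}) gives $[\eta,\xi]_{\widetilde{r}'}=[\eta,\xi]_{\widetilde{r}}+c(\eta,\xi)$ with $c(\eta,\xi):=-\mathrm{ad}^{*}_{\delta(\eta)}\xi+\mathrm{ad}^{*}_{\delta(\xi)}\eta$; using that $\mathfrak{h}$ is a subalgebra one checks $c(\eta,\xi)\in\mathfrak{h}^{\circ}$ (and $[\eta,\xi]_{\widetilde{r}}\in\mathfrak{h}^{\circ}$ by (\ref{rbracket})), so applying the sharp $\widetilde{\#}'$ and pairing with $\varepsilon$ kills every $\delta(\cdot)\in\mathfrak{h}$ piece and leaves exactly $\langle\varepsilon,c(\eta,\xi)^{\widetilde{\#}}\rangle$.

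It then remains to match these two families of correction terms, which is the main computational step. Here I would invoke (\ref{ad invariance 2}) with $u=\delta(\eta)\in\mathfrak{h}$ and with $u=\delta(\xi)\in\mathfrak{h}$: the memberships $(\mathrm{ad}^{*}_{\delta(\eta)}\xi)^{\widetilde{\#}}+[\delta(\eta),\xi^{\widetilde{\#}}]\in\mathfrak{h}$ and $(\mathrm{ad}^{*}_{\delta(\xi)}\eta)^{\widetilde{\#}}+[\delta(\xi),\eta^{\widetilde{\#}}]\in\mathfrak{h}$, paired against $\varepsilon\in\mathfrak{h}^{\circ}$, convert the bracket cross-terms of the ``bracket of sharps'' part into precisely the negatives of the two sharp terms making up $\langle\varepsilon,c(\eta,\xi)^{\widetilde{\#}}\rangle$. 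After this cancellation the only surviving contribution is $-\langle\varepsilon,[\delta(\eta),\delta(\xi)]\rangle$, and this vanishes because $[\delta(\eta),\delta(\xi)]\in\mathfrak{h}$ ($\mathfrak{h}$ being a subalgebra) while $\varepsilon\in\mathfrak{h}^{\circ}$. Thus the total increment is $0$ and $\llbracket r,r\rrbracket$ is independent of $\widetilde{r}$.

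I expect the genuine obstacle to be purely clerical rather than conceptual: keeping the signs consistent while turning each bracket $[\delta(\cdot),(\cdot)^{\widetilde{\#}}]$ into a sharp $(\mathrm{ad}^{*}\cdot)^{\widetilde{\#}}$ through (\ref{ad invariance 2}), and making sure that the two corrections arising from $c(\eta,\xi)$ line up exactly with the two cross-terms from the expanded commutator. All the real content sits in the two structural facts $\delta(\mathfrak{h}^{\circ})\subseteq\mathfrak{h}$ and $\langle\varepsilon,\mathfrak{h}\rangle=0$, which make every spurious term either land in $\mathfrak{h}$ or cancel against its partner.
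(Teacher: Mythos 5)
Your argument is correct and is essentially the paper's own proof: your $\delta(\eta),\delta(\xi)$ are the paper's $u,v\in\mathfrak{h}$, and both arguments rest on the same three facts, namely that lifts of $r$ agree on $\mathfrak{h}^{\circ}\times\mathfrak{h}^{\circ}$ so their sharps differ by elements of $\mathfrak{h}$, that $\varepsilon\in\mathfrak{h}^{\circ}$ kills all such corrections, and that (\ref{ad invariance 2}) converts $(\mathrm{ad}^{*}_{\delta(\cdot)}\cdot)^{\widetilde{\#}}$ into the matching commutator cross-terms. The only difference is organizational: the paper telescopes with mixed indices $\widetilde{\#}_1,\widetilde{\#}_2$ while you expand symmetrically and cancel term by term, which changes nothing of substance.
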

\begin{proof}
 Let $\eta,\xi,\varepsilon\in\mathfrak{h}^{\circ}$ and $\widetilde{r}_{1},\widetilde{r}_{2}\in\wedge^{2}\mathfrak{g}$ such that $\wedge^{2}q(\widetilde{r}_{i})=r$. This allows us to consider
 $
  u:=\eta^{\widetilde{\#}_1}-\eta^{\widetilde{\#}_2}\in\mathfrak{h}\; \text{and}\;\; v:=\xi^{\widetilde{\#}_1}-\xi^{\widetilde{\#}_2}\in\mathfrak{h}.
 $
  Hence we have
	\begin{align*}
		\langle \varepsilon,[\eta,\xi]_{\widetilde{r}_{1}}^{\widetilde{\#}_1}-[\eta,\xi]_{\widetilde{r}_{2}}^{\widetilde{\#}_2}\rangle&
		\stackrel{(\ref{rbracket})}{=}\langle \varepsilon,[\eta,\xi]_{\widetilde{r}_{1}}^{\widetilde{\#}_1}-[\eta,\xi]_{\widetilde{r}_{2}}^{\widetilde{\#}_1}\rangle\\
		&\stackrel{(\ref{Bracket})}{=}\langle\varepsilon,-(\ad_{u}^{*}\xi)^{\widetilde{\#}_1}+(\ad_{v}^{*}\eta)^{\widetilde{\#}_1}\rangle \\
		&=\langle\varepsilon,-(\ad_{u}^{*}\xi)^{\widetilde{\#}_1}+(\ad_{v}^{*}\eta)^{\widetilde{\#}_2}\rangle\\
		&\stackrel{(\ref{ad invariance 2})}{=}\langle\varepsilon,[u,\xi^{\widetilde{\#}_1}]-[v,\eta^{\widetilde{\#}_2}]\rangle\\
		&=\langle\varepsilon,[\eta^{\widetilde{\#}_1},\xi^{\widetilde{\#}_1}]-[\eta^{\widetilde{\#}_2},\xi^{\widetilde{\#}_2}]\rangle.
	\end{align*}
	Which proves that $\llbracket r,r\rrbracket$ depend only on $r$.		
\end{proof}
Our main results in this section can be setting as follows.
\begin{theorem}\label{Poisson theorem}
	$\pi$ is a Poisson bivector field on $G/H$ if and only if
	\begin{equation}\label{Yang Baxter}
		\llbracket r,r\rrbracket=0.
	\end{equation}
\end{theorem}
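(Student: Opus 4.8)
The plan is to realise the Poisson obstruction $[\pi,\pi]_{S}$ on $G/H$ as the $p$-projection of a left-invariant trivector on $G$, and then to translate its vanishing into the stated algebraic identity. First I would observe that, $\pi$ being $G$-invariant and the Schouten--Nijenhuis bracket being natural under the diffeomorphisms $\lambda_{g}$, the trivector field $[\pi,\pi]_{S}$ is itself $G$-invariant. Consequently $\pi$ is a Poisson tensor if and only if $[\pi,\pi]_{S}$ vanishes at the single point $\bar{e}$, and the value $\big([\pi,\pi]_{S}\big)_{\bar{e}}$ is entirely determined by its evaluation on covectors in $(\mathfrak{g}/\mathfrak{h})^{*}$, which we identify with $\mathfrak{h}^{\circ}$ via $q^{*}$.

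Next I would fix a lift $\widetilde{r}\in\wedge^{2}\mathfrak{g}$ with $\wedge^{2}q(\widetilde{r})=r$ and let $\widetilde{r}^{\,L}$ be the associated left-invariant bivector field on $G$. The key point is that $\widetilde{r}^{\,L}$ is $p$-related to $\pi$: since $p\circ L_{g}=\lambda_{g}\circ p$ we have $T_{g}p\circ T_{e}L_{g}=T_{\bar{e}}\lambda_{g}\circ T_{e}p$, whence for every $g\in G$
\[
\wedge^{2}(T_{g}p)\big(\widetilde{r}^{\,L}_{g}\big)=\wedge^{2}(T_{\bar{e}}\lambda_{g})\,\wedge^{2}(T_{e}p)(\widetilde{r})=\wedge^{2}(T_{\bar{e}}\lambda_{g})(r)=\pi_{\bar{g}},
\]
the last equality being exactly the $G$-invariance of $\pi$ together with $\wedge^{2}(T_{e}p)(\widetilde{r})=\wedge^{2}q(\widetilde{r})=r$. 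Invoking the naturality of the Schouten--Nijenhuis bracket for $p$-projectable multivector fields, $[\widetilde{r}^{\,L},\widetilde{r}^{\,L}]_{S}$ is then $p$-related to $[\pi,\pi]_{S}$, so that for all $\eta,\xi,\varepsilon\in\mathfrak{h}^{\circ}$, writing $\eta=q^{*}\bar{\eta}$ and likewise for $\xi,\varepsilon$,
\[
\big([\pi,\pi]_{S}\big)_{\bar{e}}(\bar{\eta},\bar{\xi},\bar{\varepsilon})=\big([\widetilde{r}^{\,L},\widetilde{r}^{\,L}]_{S}\big)_{e}(\eta,\xi,\varepsilon).
\]

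It remains to identify the right-hand side algebraically. As $\widetilde{r}^{\,L}$ is left-invariant and the bracket of left-invariant vector fields is the bracket of $\mathfrak{g}$, the trivector $\big([\widetilde{r}^{\,L},\widetilde{r}^{\,L}]_{S}\big)_{e}$ is the algebraic Schouten bracket of $\widetilde{r}$ with itself in $\wedge^{3}\mathfrak{g}$, whose value on $\eta,\xi,\varepsilon\in\mathfrak{g}^{*}$ is, up to a fixed nonzero constant, the cyclic sum
\[
C(\eta,\xi,\varepsilon)=\langle\eta,[\xi^{\widetilde{\#}},\varepsilon^{\widetilde{\#}}]\rangle+\langle\xi,[\varepsilon^{\widetilde{\#}},\eta^{\widetilde{\#}}]\rangle+\langle\varepsilon,[\eta^{\widetilde{\#}},\xi^{\widetilde{\#}}]\rangle.
\]
Expanding $[\eta,\xi]_{\widetilde{r}}=-\mathrm{ad}^{*}_{\eta^{\widetilde{\#}}}\xi+\mathrm{ad}^{*}_{\xi^{\widetilde{\#}}}\eta$ and using $\langle\varepsilon,\zeta^{\widetilde{\#}}\rangle=-\langle\zeta,\varepsilon^{\widetilde{\#}}\rangle$ together with the coadjoint relation $\langle\mathrm{ad}^{*}_{a}\xi,b\rangle=\pm\langle\xi,[a,b]\rangle$, a direct manipulation gives $\langle\varepsilon,[\eta,\xi]_{\widetilde{r}}^{\widetilde{\#}}\rangle=\langle\xi,[\eta^{\widetilde{\#}},\varepsilon^{\widetilde{\#}}]\rangle-\langle\eta,[\xi^{\widetilde{\#}},\varepsilon^{\widetilde{\#}}]\rangle$, so that the defining expression of $\llbracket r,r\rrbracket$ equals $C$ up to an overall sign. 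In particular $\llbracket r,r\rrbracket$ is totally skew-symmetric and a nonzero multiple of the algebraic Schouten bracket. Chaining the three identities, $\pi$ is Poisson precisely when $\big([\pi,\pi]_{S}\big)_{\bar{e}}=0$, i.e. precisely when $C$ vanishes on $\mathfrak{h}^{\circ}$, i.e. precisely when $\llbracket r,r\rrbracket=0$.

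The step I expect to be the main obstacle is the rigorous justification of the second displayed equality, namely that $[\widetilde{r}^{\,L},\widetilde{r}^{\,L}]_{S}$ and $[\pi,\pi]_{S}$ are $p$-related. The bivectors are related only along $p$, whereas the Schouten bracket involves first derivatives, so one must either quote or prove the naturality of the Schouten--Nijenhuis bracket for $p$-projectable multivector fields under the submersion $p:G\to G/H$ (equivalently, compute the bracket using $p$-projectable representatives and check that the derivative terms descend). Once this is granted, the remaining work --- the Lie-algebraic identity relating $\llbracket r,r\rrbracket$ to $C$ and the bookkeeping of the identifications $T_{e}p=\Phi_{e}\circ q$ and $\mathfrak{h}^{\circ}\cong(\mathfrak{g}/\mathfrak{h})^{*}$ --- is routine.
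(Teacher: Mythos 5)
Your proposal is correct and follows essentially the same route as the paper's proof: reduce to the point $\bar{e}$ by $G$-invariance, show $\pi$ is $p$-related to the left-invariant bivector field of a lift $\widetilde{r}$, invoke naturality of the Schouten bracket under $p$-relatedness, and identify the resulting algebraic Schouten bracket on $\mathfrak{h}^{\circ}$ with $\llbracket r,r\rrbracket$ up to a nonzero factor. The step you flag as the main obstacle ($p$-relatedness of the Schouten brackets) is exactly the step the paper also asserts without further elaboration, so your treatment matches theirs.
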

\begin{proof}
		Since $\pi$ is a $G$-invariant skew-symmetric bivector field,  it follows that $[\pi, \pi]_S$ is also $G$-invariant. Therefore, $[\pi,\pi]_S=0$ if and only if $[\pi,\pi]_{S}(\bar{e})=0$. Denote by $\widetilde{r}^{+}$ the left invariant bivector filed on $G$ associated to $\widetilde{r}$. Then, for any $g\in G$ and $\alpha,\beta\in \Omega^{1}(M)$,
	$$
		\pi(\alpha,\beta)(\bar{g})=r\left(\Phi_{g}^{*} \alpha_{\bar{g}}, \Phi_{g}^{*} \beta_{\bar{g}}\right)
	=\widetilde{r}\left(q^*\Phi_{g}^{*} \alpha_{\bar{g}}, q^*\Phi_{g}^{*} \beta_{\bar{g}}\right)
	=\widetilde{r}\left({L}_g^*p^{*} \alpha_{\bar{g}}, {L}_g^*p^{*} \beta_{\bar{g}}\right),
	$$
	where $L_g$ is the left translation by $g$. Hence, $
	\pi(\alpha,\beta)(\bar{g})=\widetilde{r}^{+}\left( p^{*}\alpha,p^{*} \beta\right)(g)$. This shows that $\pi$ and $\widetilde{r}^{+}$ are $p$-related, and then $[\pi,\pi]_S$ and $[\widetilde{r}^{+}, \widetilde{r}^{+}]_S$ are also $p$-related. From this fact, we obtain
	$$
		[\pi, \pi]_S (\bar{e}) = \wedge (T_{e}p)\left([\widetilde{r}^{+}, \widetilde{r}^{+}]_S (\bar{e}) \right)
	= \wedge \Phi_{e} \left(\wedge q(\llbracket \widetilde{r}, \widetilde{r} \rrbracket_{\mathrm{AS}})\right),
	$$
	where $\llbracket \cdot, \cdot \rrbracket_{\mathrm{AS}}$ is the algebraic Schouten bracket on the Lie algebra $\mathfrak{g}$. \\ Now, since $\wedge \Phi_{e}$ is an isomorphism  it follows that $[\pi, \pi]_S = 0$ if and only if $\wedge q(\llbracket \widetilde{r}, \widetilde{r} \rrbracket_{\mathrm{AS}})=0$, which is equivalent to say that for any $\eta,\xi,\varepsilon\in\mathfrak{h}^{\circ}$,
	$\llbracket  \widetilde{r},  \widetilde{r} \rrbracket_{\mathrm{AS}}(\eta,\xi,\varepsilon)=0$. \\ Finally, the desired equivalence is deduced from the following fact
	\begin{align*}
		\displaystyle\frac{1}{2}\llbracket \widetilde{r}, \widetilde{r} \rrbracket_{\mathrm{AS}}(\eta,\xi,\varepsilon)&=-\langle \eta,[\xi^{\widetilde{\#}},\varepsilon^{\widetilde{\#}}]\rangle-\langle \xi,[\varepsilon^{\widetilde{\#}},\eta^{\widetilde{\#}}]\rangle-\langle \varepsilon,[\eta^{\widetilde{\#}},\xi^{\widetilde{\#}}]\rangle\\
		&=\langle \varepsilon,[\eta,\xi]_{\widetilde{r}}^{\widetilde{\#}}-[\eta^{\widetilde{\#}},\xi^{\widetilde{\#}}]\rangle\\
		&=\llbracket r,r\rrbracket(\eta,\xi,\varepsilon).
	\end{align*}
\end{proof}

As a first direct consequence of Theorem \ref{Poisson theorem} we get the well known correspondence in the case of Lie groups. More precisely, if $G$ is a connected Lie group, then there is a one-to-one correspondence between left invariant Poisson bivector fields on $G$ and bivectors $r \in \wedge^{2}\mathfrak{g}$ satisfying the classical Yang-Baxter equation: 	$[\eta,\xi]_{r}^{\#} = [\eta^{\#},\xi^{\#}]$, 	where $[\eta,\xi]_{r}:=\ad_{\eta^{\#}}^{*}\xi-\ad_{\xi^{\#}}^{*}\eta$. The solutions of such equations are called $r$-matrices (one can see \cite{Alioune} for a nice description). This motivates the following definition.
\begin{definition}\label{matrices} Let $r\in(\wedge^{2}(\mathfrak{g}/\mathfrak{h}))^{\Ad(H)}$, $\llbracket r,r\rrbracket=0$ is called equation of Yang-Baxter type. And  solutions of these equations are called $r$-matrices.  
\end{definition}

As we know in the cases of Lie algebra, if $r$ is an $r$-matrix in $\mathfrak{g}$ then $(\mathfrak{g}^*,[\cdot,\cdot]_r)$ is a Lie algebra and the linear map $r_{\#}:(\mathfrak{g}^*,[\cdot,\cdot]_r)\rightarrow (\mathfrak{g},[\cdot,\cdot])$ is a morphism of Lie algebras. Now we are going to generalize this property to the case of $r$-matrices associated to a pair $(\mathfrak{g},\mathfrak{h})$. In what follows $(\mathfrak{g}/\mathfrak{h})^H$  (resp. $(\mathfrak{h}^{\circ})^H$ ) will be the vector subspace of $\Ad(H)$-invariant elements of $\mathfrak{g}/\mathfrak{h}$ (resp. $\Ad^*(H)$-invariant elements of $\mathfrak{h}^{\circ}$), and $r\in(\wedge^{2}(\mathfrak{g}/\mathfrak{h}))^{\Ad(H)}$.
\begin{lemma}$\ $
\begin{enumerate}
		\item The vector space $(\mathfrak{g}/\mathfrak{h})^H$
	endowed with the bracket
		$[u+\mathfrak{h},v+\mathfrak{h}]_{\mathfrak{g}/\mathfrak{h}}:=[u,v]+\mathfrak{h}$,
	is a Lie algebra.
	\item For any $\eta,\xi\in(\mathfrak{h}^{\circ})^H$, we have $ [\eta,\xi]_{\widetilde{r}}\in(\mathfrak{h}^{\circ})^H$,  and the bracket $[\cdot,\cdot]_{r}=[\cdot,\cdot]_{\widetilde{r}}$ defined in $(\mathfrak{h}^{\circ})^H$ does not depend on $\widetilde{r}$.
\end{enumerate}
\end{lemma}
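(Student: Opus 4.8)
The plan is to treat the two assertions separately: the first is purely a statement about the pair $(\mathfrak{g},\mathfrak{h})$ and involves no $r$, while the second concerns the $r$-dependent bracket and is where the real work lies.

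For assertion (1), I would first check that $[u+\mathfrak{h},v+\mathfrak{h}]_{\mathfrak{g}/\mathfrak{h}}:=[u,v]+\mathfrak{h}$ is independent of the chosen representatives on $(\mathfrak{g}/\mathfrak{h})^H$. The key observation is that if $v+\mathfrak{h}\in(\mathfrak{g}/\mathfrak{h})^H$, then differentiating the invariance $\mathrm{Ad}_{\exp(tw)}v-v\in\mathfrak{h}$ (valid since $\exp(tw)\in H$ for $w\in\mathfrak{h}$) gives $[\mathfrak{h},v]\subseteq\mathfrak{h}$; hence if $u-u'\in\mathfrak{h}$ and $v-v'\in\mathfrak{h}$ with $u+\mathfrak{h},v+\mathfrak{h}$ invariant, then $[u,v]-[u',v']=[u-u',v]+[u',v-v']\in\mathfrak{h}$. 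Next I would verify closure: for $a\in H$ one has $\mathrm{Ad}_a u=u+h_1$, $\mathrm{Ad}_a v=v+h_2$ with $h_i\in\mathfrak{h}$, so $[\mathrm{Ad}_a u,\mathrm{Ad}_a v]\equiv[u,v]\pmod{\mathfrak{h}}$ using $[\mathfrak{h},u],[\mathfrak{h},v]\subseteq\mathfrak{h}$ and that $\mathfrak{h}$ is a subalgebra; thus $[u,v]+\mathfrak{h}\in(\mathfrak{g}/\mathfrak{h})^H$. Bilinearity and skew-symmetry are immediate, and the Jacobi identity descends verbatim from that in $\mathfrak{g}$ after projecting by $q$. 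This settles (1).

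For the independence of $\widetilde{r}$ in (2), I would argue exactly as in the Proposition preceding Theorem \ref{Poisson theorem}. Given $\widetilde{r}_1,\widetilde{r}_2$ with $\wedge^2 q(\widetilde{r}_i)=r$, set $u:=\eta^{\widetilde{\#}_1}-\eta^{\widetilde{\#}_2}\in\mathfrak{h}$ and $v:=\xi^{\widetilde{\#}_1}-\xi^{\widetilde{\#}_2}\in\mathfrak{h}$. From the definition (\ref{Bracket}) the difference $[\eta,\xi]_{\widetilde{r}_1}-[\eta,\xi]_{\widetilde{r}_2}$ equals $-\mathrm{ad}_u^*\xi+\mathrm{ad}_v^*\eta$. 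The point is that $\eta,\xi\in(\mathfrak{h}^{\circ})^H$ are $\mathrm{Ad}^*(H)$-invariant, so differentiating $\mathrm{Ad}^*_{\exp(tw)}\eta=\eta$ gives $\mathrm{ad}_w^*\eta=\mathrm{ad}_w^*\xi=0$ for every $w\in\mathfrak{h}$; since $u,v\in\mathfrak{h}$ the difference vanishes, so $[\cdot,\cdot]_r:=[\cdot,\cdot]_{\widetilde{r}}$ is well defined on $(\mathfrak{h}^{\circ})^H$.

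The main work, and the step I expect to be the real obstacle, is the closure $[\eta,\xi]_{\widetilde{r}}\in(\mathfrak{h}^{\circ})^H$. That $[\eta,\xi]_{\widetilde{r}}\in\mathfrak{h}^{\circ}$ is already (\ref{rbracket}), so only the $\mathrm{Ad}^*(H)$-invariance remains. First I would record that for $\eta\in(\mathfrak{h}^{\circ})^H$ relation (\ref{ad invariance 2}) together with $\mathrm{ad}_w^*\eta=0$ yields $[w,\eta^{\widetilde{\#}}]\in\mathfrak{h}$ for all $w\in\mathfrak{h}$ (equivalently $q(\eta^{\widetilde{\#}})\in(\mathfrak{g}/\mathfrak{h})^H$). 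Then, using the representation identity $\mathrm{ad}_w^*\,\mathrm{ad}_{x}^*=\mathrm{ad}_{[w,x]}^*+\mathrm{ad}_x^*\,\mathrm{ad}_w^*$, I would compute $\mathrm{ad}_w^*[\eta,\xi]_{\widetilde{r}}$ for $w\in\mathfrak{h}$: every resulting term carries either a factor $\mathrm{ad}_w^*\eta$ or $\mathrm{ad}_w^*\xi$ (which vanish), or a factor $\mathrm{ad}_{[w,\eta^{\widetilde{\#}}]}^*\xi$ or $\mathrm{ad}_{[w,\xi^{\widetilde{\#}}]}^*\eta$ with bracket in $\mathfrak{h}$ (which vanish by invariance of $\xi,\eta$ under $\mathfrak{h}$), so $\mathrm{ad}_w^*[\eta,\xi]_{\widetilde{r}}=0$. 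This is the infinitesimal invariance; for the full $\mathrm{Ad}^*(H)$-invariance I would instead run the same computation at the group level, combining the equivariance $\mathrm{Ad}_a^*\mathrm{ad}_x^*=\mathrm{ad}_{\mathrm{Ad}_{a^{-1}}x}^*\mathrm{Ad}_a^*$ with (\ref{Ad invariance 2}) to replace $\mathrm{Ad}_{a^{-1}}\eta^{\widetilde{\#}}$ by $\eta^{\widetilde{\#}}$ modulo $\mathfrak{h}$ and absorbing the $\mathfrak{h}$-correction through $\mathrm{ad}_{\mathfrak{h}}^*\xi=0$. The delicate point throughout is that $\widetilde{\#}$ is defined only modulo $\mathfrak{h}$ and that $\widetilde{r}$ itself is not $\mathrm{Ad}(H)$-invariant---only its projection $r$ is---so every manipulation must remain inside the annihilator $\mathfrak{h}^{\circ}$ and invoke the invariance relations (\ref{Ad invariance 1})--(\ref{ad invariance 2}) rather than any invariance of $\widetilde{r}$.
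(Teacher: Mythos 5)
Your proposal is correct and follows essentially the same route as the paper: part (1) via well-definedness, closure in $(\mathfrak{g}/\mathfrak{h})^H$, and descent of Jacobi; part (2) via the relation $\Ad_{a^{-1}}(\eta^{\widetilde{\#}})=\eta^{\widetilde{\#}}+x$ with $x\in\mathfrak{h}$ coming from (\ref{Ad invariance 2}) and $\Ad_a^*\eta=\eta$, absorbed by $\ad_{\mathfrak{h}}^*\xi=0$, and independence of $\widetilde{r}$ from $\ad_u^*\xi=\ad_v^*\eta=0$ for $u,v\in\mathfrak{h}$. Your operator-level phrasing $\Ad_a^*\circ\ad_x^*=\ad_{\Ad_{a^{-1}}x}^*\circ\Ad_a^*$ is just the paper's pairing computation written intrinsically, and your correct decision to work at the group level rather than rely only on the infinitesimal version (since $H$ is not assumed connected) matches the paper.
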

\begin{proof}
	\begin{enumerate}
			\item At first, we will show that the bracket \([\cdot,\cdot]_{\mathfrak{g}/\mathfrak{h}}\) on the space  $(\mathfrak{g}/\mathfrak{h})^H$ is well-defined and \(\text{Ad}(H)\)-invariant. The proof proceeds in two steps.\\
\underline{Step 1}: The bracket is well-defined.
Let \(u + \mathfrak{h}, v + \mathfrak{h} \in (\mathfrak{g}/\mathfrak{h})^H\), and let \(x, y \in \mathfrak{h}\). We need to show that
$
[u + x, v + y] - [u, v] \in \mathfrak{h}$.

1. Since \(u + \mathfrak{h}, v + \mathfrak{h} \in (\mathfrak{g}/\mathfrak{h})^H\), for any \(t \in \R\), we have:
\[
\text{Ad}_{\exp(ty)}(u) - u \in \mathfrak{h} \quad \text{and} \quad \text{Ad}_{\exp(tx)}(v) - v \in \mathfrak{h}.
\]
This implies that $
[y, u] \in \mathfrak{h} \quad \text{and} \quad [x, v] \in \mathfrak{h}$.

2. Expanding:
$
[u + x, v + y] = [u, v] + [u, y] + [x, v] + [x, y]$.
Since \([u, y], [x, v], [x, y] \in \mathfrak{h}\), it follows that: $
[u + x, v + y] - [u, v] \in \mathfrak{h}$.
Thus, the bracket is well-defined on $(\mathfrak{g}/\mathfrak{h})^H$.
\\
\underline{Step 2}: The bracket is \(\text{Ad}(H)\)-invariant.
Let \(u + \mathfrak{h}, v + \mathfrak{h} \in (\mathfrak{g}/\mathfrak{h})^H\). We are going to show that $[u+\mathfrak{h},v+\mathfrak{h}]_{\mathfrak{g}/\mathfrak{h}}$ is $\Ad(H)$-invariant. To do this, consider $a\in H$, then by definition, there exist \(x, y \in \mathfrak{h}\) such that:
$
\text{Ad}_a(u) = u + x$ and $\text{Ad}_a(v) = v + y$.
1. Applying \(\text{Ad}_a\) to the bracket \([u, v] + \mathfrak{h}\):
$
\overline{\text{Ad}_a}([u, v] + \mathfrak{h}) = \text{Ad}_a([u, v]) + \mathfrak{h}$.

2. Using the fact that \(\text{Ad}_a\) is a Lie algebra homomorphism:
\[
\text{Ad}_a([u, v]) = [\text{Ad}_a(u), \text{Ad}_a(v)] = [u + x, v + y].
\]

3. By expanding as before, we obtain: $
\overline{\text{Ad}_a}([u, v] + \mathfrak{h}) = [u, v] + \mathfrak{h}$.\\
Conclusion: The bracket \([\cdot,\cdot]_{\mathfrak{g}/\mathfrak{h}}\) is well-defined and \(\text{Ad}(H)\)-invariant. The Jacobi identity for \([\cdot,\cdot]_{\mathfrak{g}/\mathfrak{h}}\) follows directly from the Jacobi identity for the Lie bracket \([\cdot,\cdot]\) on \(\mathfrak{g}\). 
This completes the proof.
\item Let $\eta,\xi\in(\mathfrak{h}^{\circ})^H$ and $a\in H$. From $(\ref{Ad invariance 2})$ there exists $x\in\mathfrak{h}$ such that 
		\begin{equation*}
			\Ad_{a^{-1}}(\eta^{\widetilde{\#}})=(\Ad_{a}^{*}\eta)^{\widetilde{\#}}+x=\eta^{\widetilde{\#}}+x.
		\end{equation*}
	So for any $u\in\mathfrak{g}$,
	\begin{align*}
		\langle \Ad_{a}^{*}(\ad_{\eta^{\widetilde{\#}}}^{*}\xi),u\rangle &=\langle \xi,[\eta^{\widetilde{\#}},\Ad_{a}(u)] \rangle\\
		&=\langle \xi,\Ad_{a}([\Ad_{a^{-1}}(\eta^{\widetilde{\#}}),u]) \rangle\\
		&=\langle \xi,[\eta^{\widetilde{\#}},u]+[x,u] \rangle\\
		&=\langle \ad_{\eta^{\widetilde{\#}}}^{*}\xi,u \rangle.
	\end{align*}
		This implies that $[\eta,\xi]_{\widetilde{r}}=-\ad_{\eta^{\widetilde{\#}}}^{*}\xi+\ad_{\xi^{\widetilde{\#}}}^{*}\eta\in(\mathfrak{h}^{\circ})^H$.
		 
		Let $\widetilde{r}_{1},\widetilde{r}_{2}\in\wedge^{2}\mathfrak{g}$ such that $(\wedge^2q)\widetilde{r}_{i}=r$. We set $u:=\eta^{\widetilde{\#}_1}-\eta^{\widetilde{\#}_2}\in\mathfrak{h}$ and $v:=\xi^{\widetilde{\#}_1}-\xi^{\widetilde{\#}_2}\in\mathfrak{h}$. So we have
			$[\eta,\xi]_{\widetilde{r}_1}-[\eta,\xi]_{\widetilde{r}_2}=-\ad_{u}^{*}\xi+\ad_{v}^{*}\eta=0$,
		  hence $[\cdot,\cdot]_{r}$ depend only on $r$.
	\end{enumerate}
\end{proof}

If we identify the vector space $(\mathfrak{g}/\mathfrak{h})^{*}$ with $\mathfrak{h}^{\circ}$ then the linear map $r_{\#}:(\mathfrak{g}/\mathfrak{h})^{*}\rightarrow\mathfrak{g}/\mathfrak{h}$ can be seen as map from $\mathfrak{h}^{\circ}$ to $\mathfrak{g}/\mathfrak{h}$.
\begin{proposition}\label{morphism}
 Let $r\in(\wedge^{2}(\mathfrak{g}/\mathfrak{h}))^{\Ad(H)}$ be a solution of (\ref{Yang Baxter}). Then $((\mathfrak{h}^\circ)^H, [\cdot,\cdot]_{r})$ is a Lie algebra and $r_{\#}: ((\mathfrak{h}^\circ)^H, [\cdot,\cdot]_{r})\rightarrow ((\mathfrak{g}/\mathfrak{h})^H,[\cdot,\cdot]_{\mathfrak{g}/\mathfrak{h}})$ is a Lie algebras morphism.
\end{proposition}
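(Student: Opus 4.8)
The plan is to first record the concrete meaning of the hypothesis $\llbracket r,r\rrbracket=0$. Since
\[
\llbracket r,r\rrbracket(\eta,\xi,\varepsilon)=\langle\varepsilon,[\eta,\xi]_{\widetilde r}^{\widetilde\#}-[\eta^{\widetilde\#},\xi^{\widetilde\#}]\rangle\qquad\forall\,\eta,\xi,\varepsilon\in\mathfrak{h}^{\circ},
\]
and since $(\mathfrak{h}^{\circ})^{\circ}=\mathfrak{h}$ in finite dimension, the hypothesis is equivalent to the ``mod $\mathfrak{h}$'' identity
\[
[\eta,\xi]_{\widetilde r}^{\widetilde\#}-[\eta^{\widetilde\#},\xi^{\widetilde\#}]\in\mathfrak{h},\qquad\forall\,\eta,\xi\in\mathfrak{h}^{\circ}.
\]
This single relation drives both assertions. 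Skew-symmetry of $[\cdot,\cdot]_{r}$ is immediate from (\ref{Bracket}), and by the previous lemma the bracket is well-defined, independent of $\widetilde r$, and closed inside $(\mathfrak{h}^{\circ})^{H}$; so only the Jacobi identity and the morphism property remain.

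For the morphism property I would use the identification under which $r_{\#}(\eta)=q(\eta^{\widetilde\#})$ for $\eta\in\mathfrak{h}^{\circ}$ (checked by pairing $q(\eta^{\widetilde\#})$ against an arbitrary $\bar\xi\in(\mathfrak{g}/\mathfrak{h})^{*}$ and unravelling $\widetilde r(q^{*}\bar\eta,q^{*}\bar\xi)=r(\bar\eta,\bar\xi)$), together with the fact, coming from (\ref{Carac2}), that $r_{\#}$ carries $(\mathfrak{h}^{\circ})^{H}$ into $(\mathfrak{g}/\mathfrak{h})^{H}$. Then for $\eta,\xi\in(\mathfrak{h}^{\circ})^{H}$,
\[
r_{\#}([\eta,\xi]_{r})-[r_{\#}\eta,r_{\#}\xi]_{\mathfrak{g}/\mathfrak{h}}=q\big([\eta,\xi]_{\widetilde r}^{\widetilde\#}\big)-q\big([\eta^{\widetilde\#},\xi^{\widetilde\#}]\big)=q\big([\eta,\xi]_{\widetilde r}^{\widetilde\#}-[\eta^{\widetilde\#},\xi^{\widetilde\#}]\big),
\]
which vanishes by the reformulated hypothesis, since $\ker q=\mathfrak{h}$. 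This yields the morphism property with essentially no computation.

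The Jacobi identity is where the real work lies. Writing $a=\eta^{\widetilde\#}$, $b=\xi^{\widetilde\#}$, $c=\zeta^{\widetilde\#}$ and using the reformulated hypothesis to set $([\eta,\xi]_{r})^{\widetilde\#}=[a,b]+h_{\eta\xi}$ with $h_{\eta\xi}\in\mathfrak{h}$, I would expand $[[\eta,\xi]_{r},\zeta]_{r}$ via (\ref{Bracket}) and the standard identity $\ad_{[a,b]}^{*}=\ad_{b}^{*}\ad_{a}^{*}-\ad_{a}^{*}\ad_{b}^{*}$. Summing cyclically over $(\eta,\xi,\zeta)$, all six double-$\ad^{*}$ terms cancel in pairs, and the Jacobiator collapses to
\[
-\ad_{h_{\eta\xi}}^{*}\zeta-\ad_{h_{\xi\zeta}}^{*}\eta-\ad_{h_{\zeta\eta}}^{*}\xi.
\]
The decisive point is that each remaining term vanishes: for $h\in\mathfrak{h}$ and any $\mu\in(\mathfrak{h}^{\circ})^{H}$ one has $\ad_{h}^{*}\mu=0$, because the $\Ad^{*}(H)$-invariance of $\mu$ and the connectedness of $H$ give, through the infinitesimal form of the representation recorded in Section \ref{Section2}, $\langle\mu,[h,v]\rangle=0$ for all $v\in\mathfrak{g}$. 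Hence the Jacobiator is zero and $((\mathfrak{h}^{\circ})^{H},[\cdot,\cdot]_{r})$ is a Lie algebra.

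I expect the main obstacle to be the bookkeeping in the cyclic cancellation of the double-$\ad^{*}$ terms, together with the clean recognition that the leftover $\mathfrak{h}$-contributions are annihilated precisely because we restricted to the $\Ad^{*}(H)$-invariant subspace $(\mathfrak{h}^{\circ})^{H}$. Without this restriction the bracket would not even close, so the Jacobi identity genuinely uses the invariance hypothesis and not only the Yang--Baxter type equation.
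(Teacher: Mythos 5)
Your proposal is correct and follows essentially the same route as the paper: both use the Yang--Baxter relation to write $([\eta,\xi]_{\widetilde r})^{\widetilde\#}=[\eta^{\widetilde\#},\xi^{\widetilde\#}]+h$ with $h\in\mathfrak{h}$, kill the residual term via $\ad_{h}^{*}\mu=0$ for $\mu\in(\mathfrak{h}^{\circ})^{H}$ (the paper's step $\ad_{[\eta,\xi]_{\widetilde r}^{\widetilde\#}}^{*}\varepsilon=\ad_{[\eta^{\widetilde\#},\xi^{\widetilde\#}]}^{*}\varepsilon$), and obtain Jacobi by the same cyclic cancellation, while the morphism property falls out of applying $q$ to the reformulated hypothesis. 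The only cosmetic differences are that you phrase the cancellation with operator identities instead of pairing against a test vector $u\in\mathfrak{g}$, and you make explicit the morphism step that the paper leaves implicit; note also that $\ad_{h}^{*}\mu=0$ needs only $\exp(th)\in H$ and $\Ad^{*}(H)$-invariance of $\mu$, not connectedness of $H$.
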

\begin{proof}
Let $\widetilde{r}\in\wedge^2\mathfrak{g}$ satisfying $(\wedge^2q)\widetilde{r}=r$. Let $\eta,\xi,\varepsilon\in(\mathfrak{h}^{\circ})^H$, so from (\ref{Yang Baxter}) there exists $u_0\in\mathfrak{h}$ such that $	[\eta,\xi]_{\widetilde{r}}^{\widetilde{\#}}=[\eta^{\widetilde{\#}},\xi^{\widetilde{\#}}]+u_0$. Hence
 	$\ad_{[\eta,\xi]_{\widetilde{r}}^{\widetilde{\#}}}^{*}\varepsilon=\ad_{[\eta^{\widetilde{\#}},\xi^{\widetilde{\#}}]}^{*}\varepsilon$. Then for any $u\in\mathfrak{g}$,
\begin{align*}
	\langle [[\eta,\xi]_{\widetilde{r}},\varepsilon]_{\widetilde{r}},u\rangle&= \langle -\ad_{[\eta,\xi]_{\widetilde{r}}^{\widetilde{\#}}}^{*}\varepsilon+\ad_{\varepsilon^{\widetilde{\#}}}^{*}[\eta,\xi]_{\widetilde{r}},u\rangle\\
	&=-\langle\varepsilon,[[\eta^{\widetilde{\#}},\xi^{\widetilde{\#}}],u]\rangle-\langle\xi,[\eta^{\widetilde{\#}},[\varepsilon^{\widetilde{\#}},u]]\rangle+\langle\eta,[\xi^{\widetilde{\#}},[\varepsilon^{\widetilde{\#}},u]]\rangle.
\end{align*} 
In the same way we get
\begin{equation*}
	\langle [[\xi,\varepsilon]_{\widetilde{r}},\eta]_{\widetilde{r}},u\rangle=-\langle\eta,[[\xi^{\widetilde{\#}},\varepsilon^{\widetilde{\#}}],u]\rangle-\langle\varepsilon,[\xi^{\widetilde{\#}},[\eta^{\widetilde{\#}},u]]\rangle+\langle\xi,[\varepsilon^{\widetilde{\#}},[\eta^{\widetilde{\#}},u]]\rangle,
\end{equation*}
and
\begin{equation*}
	\langle [[\varepsilon,\eta]_{\widetilde{r}},\xi]_{\widetilde{r}},u\rangle=-\langle\xi,[[\varepsilon^{\widetilde{\#}},\eta^{\widetilde{\#}}],u]\rangle-\langle\eta,[\varepsilon^{\widetilde{\#}},[\xi^{\widetilde{\#}},u]]\rangle+\langle\varepsilon,[\eta^{\widetilde{\#}},[\xi^{\widetilde{\#}},u]]\rangle.
\end{equation*}
Hence we get 
\begin{equation*}
[[\eta,\xi]_{\widetilde{r}},\varepsilon]_{\widetilde{r}}+[[\xi,\varepsilon]_{\widetilde{r}},\eta]_{\widetilde{r}}+[[\varepsilon,\eta]_{\widetilde{r}},\xi]_{\widetilde{r}}=0.
\end{equation*}
Which proves that $[\cdot,\cdot]_{r}$ is a Lie bracket on $(\mathfrak{h}^{\circ})^H$ and $r_{\#}: ((\mathfrak{h}^\circ)^H, [\cdot,\cdot]_{r})\rightarrow ((\mathfrak{g}/\mathfrak{h})^H,[\cdot,\cdot]_{\mathfrak{g}/\mathfrak{h}})$ is a Lie algebras morphism.
\end{proof}

\subsection{Description of $r$-matrices}
Let $M{=}G/H$ be a $G$-homogeneous manifold endowed with a $G$-invariant Poisson structure $\pi$. Denote by $r\in\wedge^{2}(\mathfrak{g}/\mathfrak{h})$ the $\Ad(H)$-invariant bivector associated to $\pi$, and $\widetilde{r}\in\wedge^2\mathfrak{g}$ any bivector satisfying $q\circ \widetilde{r}_{\#}\circ q^*=r_{\#}$. Denote by $\mathfrak{a}_r=q^{-1}(Im(r_{\#}))$.
\begin{proposition}\label{Lie algebra}
	$\mathfrak{a}_r$ is a Lie subalgebra of $\mathfrak{g}$, which contains $\mathfrak{h}$.
\end{proposition}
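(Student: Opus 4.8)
The plan is to first dispose of the containment $\mathfrak{h}\subset\mathfrak{a}_r$ and then reduce the subalgebra property to a few bracket computations, of which only one will genuinely use the Poisson hypothesis. Since $q(\mathfrak{h})=0\in\mathrm{Im}(r_{\#})$, the inclusion $\mathfrak{h}\subset q^{-1}(\mathrm{Im}(r_{\#}))=\mathfrak{a}_r$ is immediate, and $\mathfrak{a}_r$ is clearly a linear subspace. The useful reformulation is to describe $\mathfrak{a}_r$ explicitly: using $q\circ\widetilde{r}_{\#}\circ q^*=r_{\#}$ together with the identification $(\mathfrak{g}/\mathfrak{h})^*\cong\mathfrak{h}^{\circ}$, one has $r_{\#}(\eta)=q(\eta^{\widetilde{\#}})$ for $\eta\in\mathfrak{h}^{\circ}$, hence $\mathrm{Im}(r_{\#})=q\big(\widetilde{r}_{\#}(\mathfrak{h}^{\circ})\big)$ and therefore
$$\mathfrak{a}_r=q^{-1}\big(q(\widetilde{r}_{\#}(\mathfrak{h}^{\circ}))\big)=\widetilde{r}_{\#}(\mathfrak{h}^{\circ})+\mathfrak{h}=V+\mathfrak{h},\qquad V:=\{\eta^{\widetilde{\#}}:\eta\in\mathfrak{h}^{\circ}\}.$$
So it suffices to show $[V+\mathfrak{h},V+\mathfrak{h}]\subset V+\mathfrak{h}$.

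Next I would extract the single consequence of $\pi$ being Poisson that is needed. By Theorem \ref{Poisson theorem}, $\llbracket r,r\rrbracket=0$, that is, $\langle\varepsilon,[\eta,\xi]_{\widetilde{r}}^{\widetilde{\#}}-[\eta^{\widetilde{\#}},\xi^{\widetilde{\#}}]\rangle=0$ for all $\eta,\xi,\varepsilon\in\mathfrak{h}^{\circ}$. Since a vector of $\mathfrak{g}$ annihilated by every $\varepsilon\in\mathfrak{h}^{\circ}$ must lie in $\mathfrak{h}$ (because $(\mathfrak{h}^{\circ})^{\circ}=\mathfrak{h}$), this says precisely
$$[\eta^{\widetilde{\#}},\xi^{\widetilde{\#}}]\equiv[\eta,\xi]_{\widetilde{r}}^{\widetilde{\#}}\pmod{\mathfrak{h}}\qquad(\eta,\xi\in\mathfrak{h}^{\circ}).$$
Combined with $[\eta,\xi]_{\widetilde{r}}\in\mathfrak{h}^{\circ}$ from (\ref{rbracket}), the right-hand side lies in $V$, so $[\eta^{\widetilde{\#}},\xi^{\widetilde{\#}}]\in V+\mathfrak{h}=\mathfrak{a}_r$; this settles the bracket $[V,V]$.

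Finally I would dispatch the two remaining pieces. We have $[\mathfrak{h},\mathfrak{h}]\subset\mathfrak{h}\subset\mathfrak{a}_r$ since $\mathfrak{h}$ is a subalgebra. For $[\mathfrak{h},V]$, take $u\in\mathfrak{h}$ and $\eta\in\mathfrak{h}^{\circ}$; then $\ad_u^*\eta\in\mathfrak{h}^{\circ}$, because $\langle\ad_u^*\eta,v\rangle=\langle\eta,[u,v]\rangle=0$ for $v\in\mathfrak{h}$ (as $[u,v]\in\mathfrak{h}$), and (\ref{ad invariance 2}) gives $[u,\eta^{\widetilde{\#}}]\equiv-(\ad_u^*\eta)^{\widetilde{\#}}\pmod{\mathfrak{h}}$, whose right-hand side lies in $V$; hence $[\mathfrak{h},V]\subset V+\mathfrak{h}=\mathfrak{a}_r$, and by skew-symmetry $[V,\mathfrak{h}]\subset\mathfrak{a}_r$ as well. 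Expanding $[V+\mathfrak{h},V+\mathfrak{h}]$ into these four brackets then yields $[\mathfrak{a}_r,\mathfrak{a}_r]\subset\mathfrak{a}_r$. The only step that is not pure bookkeeping is the $[V,V]$ computation, where the Yang-Baxter equation $\llbracket r,r\rrbracket=0$ is exactly what forces $[\eta^{\widetilde{\#}},\xi^{\widetilde{\#}}]$ back into $\mathfrak{a}_r$; I therefore expect that to be the crux, with everything else following from the $\Ad(H)$-invariance relations already established.
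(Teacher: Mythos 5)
Your proof is correct and follows essentially the same route as the paper: the paper writes $x=\eta^{\widetilde{\#}}+x_0$, $y=\xi^{\widetilde{\#}}+y_0$ and computes $q([x,y])$ in one pass, using exactly the same three ingredients you isolate — the Yang--Baxter identity $\llbracket r,r\rrbracket=0$ for the $[\eta^{\widetilde{\#}},\xi^{\widetilde{\#}}]$ term, relation (\ref{ad invariance 2}) for the cross terms, and the fact that $[\eta,\xi]_{\widetilde{r}}$ and $\ad_{x_0}^{*}\xi$ remain in $\mathfrak{h}^{\circ}$. Your decomposition $\mathfrak{a}_r=V+\mathfrak{h}$ with the four-bracket case split is just a more explicit bookkeeping of that same computation.
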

\begin{proof}
	Let $x,y\in\mathfrak{a}_r$. Then there exists  $\eta,\xi\in\mathfrak{h}^{\circ}$ such that $x=\eta^{\widetilde{\#}}+x_0$ and  $y=\xi^{\widetilde{\#}}+y_0$ where $x_0,y_0\in\mathfrak{h}$. Hence we have
	\begin{align*}
		\displaystyle	q([x,y])&=q([\eta^{\widetilde{\#}},\xi^{\widetilde{\#}}])+q(\ad_{x_0}(\xi^{\widetilde{\#}}))-q(\ad_{y_0}(\eta^{\widetilde{\#}}))\\
		&=q([\eta,\xi]_{\widetilde{r}}^{\widetilde{\#}})+q((\ad_{x_0}^{*}\xi)^{\widetilde{\#}})-q((\ad_{y_0}^{*}\eta)^{\widetilde{\#}})\\
		&=q\left(\left([\eta,\xi]_{\widetilde{r}}+\ad_{x_0}^{*}\xi-\ad_{y_0}^{*}\eta\right)^{\widetilde{\#}}\right).
	\end{align*} 
	Thus $[x,y]\in\mathfrak{a}_r$.
\end{proof}
We define on the Lie algebra $\mathfrak{a}_r$ the following skew-symmetric bilinear form  $$\omega_r:\mathfrak{a}_r\times\mathfrak{a}_r\rightarrow\mathbb{R},\,(x,y)\mapsto\omega_r(x,y)=\widetilde{r}(\eta,\xi),$$ 
where $\eta,\xi\in\mathfrak{h}^{\circ}$ are any elements satisfying $q(\eta^{\widetilde{\#}})=q(x)$ and $q(\xi^{\widetilde{\#}})=q(y)$.   \begin{proposition}\label{2-cocycle}$\ $
	\begin{enumerate}
		\item $\omega_r$ is well defined and depend only on $r$,
		\item $\omega_r$ is $\Ad(H)$-invariant,
		\item $\omega_r$ is $2$-cocycle of $\mathfrak{a}_r$, i.e.,\begin{equation*}
			\omega_r([x,y],z)+\omega_r([y,z],x)+\omega_r([z,x],y)=0,
		\end{equation*} 
		\item $\mathrm{Rad}(\omega_r)=\{x\in\mathfrak{a}_r\,|\, i_{x}\omega_r=0\}=\mathfrak{h}$.
	\end{enumerate}
\end{proposition}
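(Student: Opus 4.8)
The plan is to handle the four assertions in turn, in each case reducing to the algebraic identities (\ref{Carac2}), (\ref{Ad invariance 1})--(\ref{ad invariance 1}) already proved for the lift $\widetilde r$, and ultimately to the Yang--Baxter equation $\llbracket r,r\rrbracket=0$ supplied by Theorem \ref{Poisson theorem}. The first thing I would record, and use throughout, are the two simplified formulas
\[
\omega_r(x,y)=\langle\xi,x\rangle\ \ (q(\xi^{\widetilde{\#}})=q(y)),\qquad \omega_r(x,y)=-\langle\eta,y\rangle\ \ (q(\eta^{\widetilde{\#}})=q(x)),
\]
valid for $\eta,\xi\in\mathfrak h^{\circ}$. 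Both follow because $q(\eta^{\widetilde{\#}})=q(x)$ forces $\eta^{\widetilde{\#}}-x\in\mathfrak h$, and $\xi\in\mathfrak h^{\circ}$ kills $\mathfrak h$ inside $\widetilde r(\eta,\xi)=\langle\xi,\eta^{\widetilde{\#}}\rangle=-\langle\eta,\xi^{\widetilde{\#}}\rangle$. From these, part (1) is immediate: changing the auxiliary $\eta,\xi$ alters them by covectors whose $\widetilde r_{\#}$-image lies in $\mathfrak h$, which are annihilated by $\mathfrak h^{\circ}$; and independence of the lift follows since $q(\eta^{\widetilde{\#}})=r_{\#}(\eta)$ depends only on $r$ (via $q\circ\widetilde r_{\#}\circ q^{*}=r_{\#}$), so one auxiliary $\eta,\xi$ serves every lift, while writing $\eta=q^{*}\bar\eta,\xi=q^{*}\bar\xi$ gives $\widetilde r(\eta,\xi)=r(\bar\eta,\bar\xi)$.

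For part (2) I would first note that $\mathfrak a_r$ is $\mathrm{Ad}(H)$-stable because $\mathrm{Im}(r_{\#})$ is $\overline{\mathrm{Ad}}(H)$-invariant by (\ref{Carac2}). Given $a\in H$ and $x,y\in\mathfrak a_r$, I would take $\mathrm{Ad}_{a^{-1}}^{*}\eta,\mathrm{Ad}_{a^{-1}}^{*}\xi$ as auxiliary covectors for $\mathrm{Ad}_a x,\mathrm{Ad}_a y$: these lie in $\mathfrak h^{\circ}$ (since $\mathfrak h$ is $\mathrm{Ad}(H)$-invariant) and satisfy $q\big((\mathrm{Ad}_{a^{-1}}^{*}\eta)^{\widetilde{\#}}\big)=q(\mathrm{Ad}_a x)$ by (\ref{Ad invariance 2}) applied to $a^{-1}$. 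Then $\omega_r(\mathrm{Ad}_a x,\mathrm{Ad}_a y)=\widetilde r(\mathrm{Ad}_{a^{-1}}^{*}\eta,\mathrm{Ad}_{a^{-1}}^{*}\xi)=\widetilde r(\eta,\xi)=\omega_r(x,y)$ by (\ref{Ad invariance 1}).

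The cocycle identity (3) is the main obstacle. Using the formula of part (1), the cyclic sum becomes
\[
S=\langle\varepsilon,[x,y]\rangle+\langle\eta,[y,z]\rangle+\langle\xi,[z,x]\rangle,\qquad r_{\#}(\eta)=q(x),\ r_{\#}(\xi)=q(y),\ r_{\#}(\varepsilon)=q(z).
\]
The crux is that $S$ does not change when any of $x,y,z$ is modified by an element of $\mathfrak h$: replacing $x$ by $x+h$ (which fixes $\eta,\xi,\varepsilon$ since $q$ is unchanged) alters $S$ by $\langle\varepsilon,[h,y]\rangle+\langle\xi,[z,h]\rangle$, and rewriting these as $\widetilde r(\xi,\mathrm{ad}_h^{*}\varepsilon)-\widetilde r(\varepsilon,\mathrm{ad}_h^{*}\xi)$ and applying (\ref{ad invariance 1}) shows the increment vanishes. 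I may therefore choose the distinguished representatives $x=\eta^{\widetilde{\#}},y=\xi^{\widetilde{\#}},z=\varepsilon^{\widetilde{\#}}$, for which $S$ is precisely the cyclic expression $\langle\varepsilon,[\eta^{\widetilde{\#}},\xi^{\widetilde{\#}}]\rangle+\langle\eta,[\xi^{\widetilde{\#}},\varepsilon^{\widetilde{\#}}]\rangle+\langle\xi,[\varepsilon^{\widetilde{\#}},\eta^{\widetilde{\#}}]\rangle$ whose vanishing on $\mathfrak h^{\circ}$ is, by the computation in the proof of Theorem \ref{Poisson theorem}, equivalent to $\llbracket r,r\rrbracket=0$; since $\pi$ is Poisson, $S=0$. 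The delicate point is solely the representative-independence, where one must track that modifying a single variable affects two of the three terms.

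Finally, for (4) I would prove both inclusions. If $h\in\mathfrak h$ then $\omega_r(h,y)=\langle\xi,h\rangle=0$ for every $y$ because $\xi\in\mathfrak h^{\circ}$, so $\mathfrak h\subseteq\mathrm{Rad}(\omega_r)$. Conversely, if $x\in\mathrm{Rad}(\omega_r)$, choose $\eta\in\mathfrak h^{\circ}$ with $r_{\#}(\eta)=q(x)$; by the formula $\omega_r(x,y)=-\langle\eta,y\rangle$ the hypothesis says $\eta$ annihilates $\mathfrak a_r$. Since $\beta^{\widetilde{\#}}\in\mathfrak a_r$ for every $\beta\in\mathfrak h^{\circ}$ (because $q(\beta^{\widetilde{\#}})\in\mathrm{Im}(r_{\#})$ and $\mathfrak h\subseteq\mathfrak a_r$), skew-symmetry gives $\langle\beta,\eta^{\widetilde{\#}}\rangle=-\langle\eta,\beta^{\widetilde{\#}}\rangle=0$ for all $\beta\in\mathfrak h^{\circ}$, whence $\eta^{\widetilde{\#}}\in(\mathfrak h^{\circ})^{\circ}=\mathfrak h$. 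Therefore $q(x)=r_{\#}(\eta)=q(\eta^{\widetilde{\#}})=0$, i.e. $x\in\mathfrak h$, which closes the argument.
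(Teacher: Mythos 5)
Your proof is correct and follows essentially the same route as the paper: the same reduction of $\omega_r$ to pairings $\langle\xi,x\rangle$ against covectors in $\mathfrak{h}^{\circ}$, the use of (\ref{Ad invariance 1})--(\ref{ad invariance 1}) for well-definedness and invariance, the Yang--Baxter equation for the cocycle identity, and the duality $(\mathfrak{h}^{\circ})^{\circ}=\mathfrak{h}$ for the radical. The only cosmetic difference is in item (3), where you first prove representative-independence of the cyclic sum and then evaluate at the distinguished representatives $\eta^{\widetilde{\#}},\xi^{\widetilde{\#}},\varepsilon^{\widetilde{\#}}$, whereas the paper expands each term with the correction covectors $\ad_{x_0}^{*}\xi$, $\ad_{y_0}^{*}\eta$ and cancels them pairwise; both rest on exactly the same two identities.
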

\begin{proof}
	\begin{enumerate}
		\item Let $\eta_1,\eta_2,\xi\in\mathfrak{h}^{\circ}$ such that $q(\eta_{1}^{\widetilde{\#}})=q(\eta_{2}^{\widetilde{\#}})$. Then we have
		\begin{equation*}
			\widetilde{r}(\eta_{1},\xi)-\widetilde{r}(\eta_{2},\xi)=\langle \xi,\eta_{1}^{\widetilde{\#}}-\eta_{2}^{\widetilde{\#}} \rangle
			=0.
		\end{equation*}
		Hence $\omega_r$ is well defined. To see that $\omega_r$ depend only on $r$ we consider $\widetilde{r}_1,\widetilde{r}_2\in\wedge^2\mathfrak{g}$ satisfying $q^*\widetilde{r}_i=r$. Then for any $\eta,\xi\in\mathfrak{h}^\circ$,
		\begin{equation*}
				\widetilde{r}_1(\eta,\xi)-\widetilde{r}_2(\eta,\xi)=\langle \xi,\eta^{\widetilde{\#}_1}-\eta^{\widetilde{\#}_2} \rangle=0.
		\end{equation*}
		\item Follow directly from (\ref{Ad invariance 1}).
		\item Let $x,y\in\mathfrak{a}_r$ from the proof of Proposition \ref{Lie algebra}, there exists $\eta,\xi\in\mathfrak{h}^{\circ}$ and $x_0,y_0\in\mathfrak{h}$ such that 
		\begin{equation*}
			q([x,y])=q\left(\left([\eta,\xi]_{\widetilde{r}}+\ad_{x_0}^{*}\xi-\ad_{y_0}^{*}\eta\right)^{\widetilde{\#}}\right).
		\end{equation*}
		Let $z\in\mathfrak{a}_r$ and $\varepsilon\in\mathfrak{h}^{\circ}$ such that $q(\varepsilon^{\widetilde{\#}})=q(x)$. Hence we have
		\begin{equation*}
			\omega_r([x,y],z)=\widetilde{r}([\eta,\xi]_{\widetilde{r}},\varepsilon)+\widetilde{r}(\ad_{x_0}^{*}\xi,\varepsilon)-\widetilde{r}(\ad_{y_0}^{*}\eta,\varepsilon).
		\end{equation*}
		In the same way there exists $z_0\in\mathfrak{h}$ such that 
		\begin{equation*}
			\omega_r([y,z],x)=\widetilde{r}([\xi,\varepsilon]_{\widetilde{r}},\eta)+\widetilde{r}(\ad_{y_0}^{*}\varepsilon,\eta)-\widetilde{r}(\ad_{z_0}^{*}\xi,\eta),
		\end{equation*}
		and 
		\begin{equation*}
			\omega_r([z,x],y)=\widetilde{r}([\varepsilon,\eta]_{\widetilde{r}},\xi)+\widetilde{r}(\ad_{z_0}^{*}\eta,\xi)-\widetilde{r}(\ad_{x_0}^{*}\varepsilon,\xi).
		\end{equation*}
		Hence from (\ref{Yang Baxter}) and (\ref{ad invariance 1}) it follows that $\omega_r$ is a $2$-cocycle.
		\item Let $x\in\mathrm{Rad}(\omega_r)$ and $\eta\in\mathfrak{h}^{\circ}$ such that $q(\eta^{\widetilde{\#}})=q(x)$. Hence for all $\xi\in\mathfrak{h}^{\circ}$, we have
		\begin{equation*}
			0=\widetilde{r}(\eta,\xi)=\langle\xi,\eta^{\widetilde{\#}}\rangle=\langle\xi,x\rangle.
		\end{equation*}
		Which implies that $x\in\mathfrak{h}$. Hence $\mathrm{Rad}(\omega_r)\subset\mathfrak{h}$ and the other inclusion is obvious.
	\end{enumerate}
\end{proof}
Conversely, given an $\Ad(H)$-invariant Lie subalgebra $\mathfrak{a} \subset \mathfrak{g}$ containing $\mathfrak{h}$, equipped with an $\Ad(H)$-invariant $2$-cocycle $\omega$ satisfying $\mathrm{Rad}(\omega) = \mathfrak{h}$, we can construct an $r$-matrix on $\mathfrak{g}/\mathfrak{h}$ such that the associated Lie algebra $\mathfrak{a}_r$ coincides with $\mathfrak{a}$. More precisely, $r$ is given by the following diagram.
$$\begin{tikzcd}
\left(\mathfrak{a}/\mathfrak{h}\right)^{*}\arrow[r, "(\omega_{\#})^{-1}"] 
	& \mathfrak{a}/\mathfrak{h}  \arrow["\iota",d]  \\
	\arrow[r, "r_{\#}"] \arrow[u, "\iota^*"] \left(\mathfrak{g}/\mathfrak{h}\right)^{*} 
	&  \mathfrak{g}/\mathfrak{h}.  
\end{tikzcd}$$
\begin{proposition} $r$ is an $r$-matrix.
\end{proposition}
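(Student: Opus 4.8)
The plan is to verify three things about the bivector $r$ defined by $r_{\#}=\iota\circ\omega_{\#}^{-1}\circ\iota^{*}$: that it is a genuine element of $\wedge^{2}(\mathfrak{g}/\mathfrak{h})$, that it is $\mathrm{Ad}(H)$-invariant, and that it satisfies $\llbracket r,r\rrbracket=0$; the equality $\mathfrak{a}_r=\mathfrak{a}$ will fall out along the way. First note that, since $\mathrm{Rad}(\omega)=\mathfrak{h}$, the cocycle $\omega$ descends to a nondegenerate skew form $\bar{\omega}$ on $\mathfrak{a}/\mathfrak{h}$, so $\omega_{\#}\colon\mathfrak{a}/\mathfrak{h}\to(\mathfrak{a}/\mathfrak{h})^{*}$, $\langle\omega_{\#}\bar{x},\bar{y}\rangle=\bar{\omega}(\bar{x},\bar{y})$, is an isomorphism and $r_{\#}$ is well-defined. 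I would then record that $r_{\#}$ is skew-adjoint: from $\omega_{\#}^{*}=-\omega_{\#}$ we get $(\omega_{\#}^{-1})^{*}=-\omega_{\#}^{-1}$, hence $r_{\#}^{*}=\iota\,(\omega_{\#}^{-1})^{*}\,\iota^{*}=-r_{\#}$, so $r\in\wedge^{2}(\mathfrak{g}/\mathfrak{h})$. Moreover $\iota^{*}$ is surjective and $\omega_{\#}^{-1}$ bijective, so $\mathrm{Im}(r_{\#})=\iota(\mathfrak{a}/\mathfrak{h})=\mathfrak{a}/\mathfrak{h}$, whence $\mathfrak{a}_r=q^{-1}(\mathrm{Im}(r_{\#}))=\mathfrak{a}$.

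For the $\mathrm{Ad}(H)$-invariance I would check condition (\ref{Carac2}). The hypotheses supply three equivariances: $\iota$ commutes with $\overline{\mathrm{Ad}}_a$ (as $\mathfrak{a}$ is $\mathrm{Ad}(H)$-stable), $\iota^{*}$ commutes with $\overline{\mathrm{Ad}}_a^{*}$, and the $\mathrm{Ad}(H)$-invariance of $\omega$ gives $\omega_{\#}\circ\overline{\mathrm{Ad}}_a=\overline{\mathrm{Ad}}_{a^{-1}}^{*}\circ\omega_{\#}$, hence $\omega_{\#}^{-1}\circ\overline{\mathrm{Ad}}_a^{*}=\overline{\mathrm{Ad}}_{a^{-1}}\circ\omega_{\#}^{-1}$. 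Chaining these through $r_{\#}=\iota\circ\omega_{\#}^{-1}\circ\iota^{*}$ yields $r_{\#}\circ\overline{\mathrm{Ad}}_a^{*}=\overline{\mathrm{Ad}}_{a^{-1}}\circ r_{\#}$, which is exactly (\ref{Carac2}).

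The heart of the matter is $\llbracket r,r\rrbracket=0$, and its engine is the defining relation of the construction, which I would extract first: for $\eta\in\mathfrak{h}^{\circ}$ and $y\in\mathfrak{a}$,
\[
\omega(\eta^{\widetilde{\#}},y)=\langle\eta,y\rangle. \qquad(\star)
\]
This comes from unwinding $r_{\#}=\iota\circ\omega_{\#}^{-1}\circ\iota^{*}$ together with the fact that $\bar{\omega}$ is the descent of $\omega$; in particular it exhibits $\eta^{\widetilde{\#}}\in\mathfrak{a}$. Now fix $\eta,\xi,\varepsilon\in\mathfrak{h}^{\circ}$; then $\eta^{\widetilde{\#}},\xi^{\widetilde{\#}},\varepsilon^{\widetilde{\#}}\in\mathfrak{a}$, and by (\ref{rbracket}) we have $[\eta,\xi]_{\widetilde{r}}\in\mathfrak{h}^{\circ}$ with $[\eta,\xi]_{\widetilde{r}}^{\widetilde{\#}}\in\mathfrak{a}$. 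Expanding (\ref{Bracket}) and applying $(\star)$ to $[\eta^{\widetilde{\#}},z],[\xi^{\widetilde{\#}},z]\in\mathfrak{a}$ for $z\in\mathfrak{a}$, I would rewrite $\langle[\eta,\xi]_{\widetilde{r}},z\rangle$ entirely in terms of $\omega$ and then recognize, through the $2$-cocycle identity for $\omega$ on $\mathfrak{a}$, that this equals $\omega([\eta^{\widetilde{\#}},\xi^{\widetilde{\#}}],z)$. Comparing with $(\star)$ applied to $[\eta,\xi]_{\widetilde{r}}$ gives
\[
\omega\bigl([\eta,\xi]_{\widetilde{r}}^{\widetilde{\#}}-[\eta^{\widetilde{\#}},\xi^{\widetilde{\#}}],\,z\bigr)=0\qquad\text{for all }z\in\mathfrak{a}.
\]
Since both terms lie in $\mathfrak{a}$ and $\mathrm{Rad}(\omega)=\mathfrak{h}$, their difference lies in $\mathfrak{h}$, so pairing with $\varepsilon\in\mathfrak{h}^{\circ}$ gives $\llbracket r,r\rrbracket(\eta,\xi,\varepsilon)=0$.

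The main obstacle is the bookkeeping in this middle step: one must ensure that every argument fed to $\omega$ and to the cocycle identity genuinely lies in $\mathfrak{a}$ — which is exactly where $\mathfrak{h}\subset\mathfrak{a}$, the subalgebra property of $\mathfrak{a}$, and relation $(\star)$ are all used — and that the passage from the $\widetilde{r}$-bracket to $\omega([\eta^{\widetilde{\#}},\xi^{\widetilde{\#}}],\cdot)$ matches signs with the $2$-cocycle identity. The concluding descent, turning ``$\omega$-orthogonal to all of $\mathfrak{a}$'' into actual vanishing, is where the full strength $\mathrm{Rad}(\omega)=\mathfrak{h}$ (and not merely $\mathfrak{h}\subseteq\mathrm{Rad}(\omega)$) is essential.
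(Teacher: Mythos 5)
Your proposal is correct and follows essentially the route the paper intends: its one-line proof simply points to running the computation of the third assertion of Proposition \ref{2-cocycle} in reverse, which is exactly your middle step relating $\langle[\eta,\xi]_{\widetilde r},\cdot\rangle$ to $\omega([\eta^{\widetilde\#},\xi^{\widetilde\#}],\cdot)$ via the cocycle identity and then using $\mathrm{Rad}(\omega)=\mathfrak{h}$. Your write-up is in fact more complete than the paper's, since you also verify skew-symmetry, the $\mathrm{Ad}(H)$-invariance via (\ref{Carac2}), the key relation $(\star)$, and the identification $\mathfrak{a}_r=\mathfrak{a}$, all of which the paper leaves implicit.
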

\begin{proof}
Similar to the proof of the third assertion of Proposition \ref{2-cocycle}.
\end{proof}
\begin{corollary}\label{description}
	When $H$ is connected, the class of $G$-invariant Poisson tensors on $G/H$ is in bijective correspondence with the class of Lie subalgebras $\mathfrak{a} \subset \mathfrak{g}$ containing $\mathfrak{h}$, equipped with a $2$-cocycle $\omega$ satisfying $\mathrm{Rad}(\omega) = \mathfrak{h}$.
\end{corollary}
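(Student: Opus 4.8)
The plan is to obtain the bijection by gluing together the results already proved and then to isolate the one genuinely new point: why, once $H$ is connected, the $\Ad(H)$-invariance hypotheses on the pair $(\mathfrak{a},\omega)$ may be dropped. The forward map sends a $G$-invariant Poisson tensor $\pi$ to its $\Ad(H)$-invariant bivector $r$, which is an $r$-matrix by Theorem \ref{Poisson theorem}, and then to the pair $(\mathfrak{a}_r,\omega_r)$; Proposition \ref{Lie algebra} and Proposition \ref{2-cocycle} guarantee that this pair has exactly the required properties ($\mathfrak{a}_r$ a Lie subalgebra containing $\mathfrak{h}$, and $\omega_r$ a $2$-cocycle with $\mathrm{Rad}(\omega_r)=\mathfrak{h}$). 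The backward map sends a pair $(\mathfrak{a},\omega)$ to the bivector $r$ produced by the commutative diagram, an $r$-matrix by the converse construction preceding the statement, and then to the $G$-invariant Poisson tensor it determines through Theorem \ref{Poisson theorem}. Thus the corollary reduces to two tasks: checking that these maps are mutually inverse, and checking that the invariance hypotheses are automatic.

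For the mutual-inverse part I would run a short diagram chase in each direction. Starting from $(\mathfrak{a},\omega)$, the induced form $\omega_{\#}\colon\mathfrak{a}/\mathfrak{h}\to(\mathfrak{a}/\mathfrak{h})^{*}$ is an isomorphism precisely because $\mathrm{Rad}(\omega)=\mathfrak{h}$, and since $\iota^{*}$ is surjective the map $r_{\#}=\iota\circ\omega_{\#}^{-1}\circ\iota^{*}$ has image $\iota(\mathfrak{a}/\mathfrak{h})$; hence $\mathfrak{a}_r=q^{-1}(\mathrm{Im}(r_{\#}))=\mathfrak{a}$, and unwinding $\omega_r(x,y)=\widetilde{r}(\eta,\xi)=\langle\xi,\eta^{\widetilde{\#}}\rangle$ against the diagram returns $\omega_r=\omega$. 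Conversely, starting from an $r$-matrix $r$, the definition of $\mathfrak{a}_r$ already forces $r_{\#}$ to factor through its image $\mathrm{Im}(r_{\#})=\mathfrak{a}_r/\mathfrak{h}$, on which $\omega_r$ is nondegenerate by Proposition \ref{2-cocycle}; feeding $(\mathfrak{a}_r,\omega_r)$ back through the diagram therefore reproduces $r_{\#}$, and hence $r$. Both composites are the identity.

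The conceptual step, and the reason $H$ connected enters, is that the $\Ad(H)$-invariance of $\mathfrak{a}$ and of $\omega$ demanded by the converse construction is forced by the remaining data. Since $\mathfrak{a}$ is a Lie subalgebra containing $\mathfrak{h}$, we have $[\mathfrak{h},\mathfrak{a}]\subseteq[\mathfrak{a},\mathfrak{a}]\subseteq\mathfrak{a}$, so $\mathfrak{a}$ is $\ad(\mathfrak{h})$-stable; and evaluating the $2$-cocycle identity on $u\in\mathfrak{h}=\mathrm{Rad}(\omega)$ annihilates the middle term $\omega([x,y],u)=0$ and leaves
\begin{equation*}
\omega([u,x],y)+\omega(x,[u,y])=0,\qquad u\in\mathfrak{h},\ x,y\in\mathfrak{a},
\end{equation*}
which is exactly the $\ad(\mathfrak{h})$-invariance of $\omega$. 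Because $H$ is connected, $\ad(\mathfrak{h})$-invariance coincides with $\Ad(H)$-invariance, by the equivalence recorded in Section \ref{Section2}, so both invariance conditions hold automatically and can be omitted from the statement.

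I expect the main obstacle to be purely clerical: keeping the identification $(\mathfrak{g}/\mathfrak{h})^{*}\cong\mathfrak{h}^{\circ}$ used in the definition of $\omega_r$ consistent with the maps $\iota,\iota^{*},\omega_{\#}$ appearing in the diagram, so that the two composites really are the identity on the nose rather than up to an unnoticed sign or transpose. The only substantive idea beyond the preceding propositions is the automatic-invariance computation above, which is what makes the clean, invariance-free formulation of the correspondence possible.
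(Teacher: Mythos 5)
Your proof is correct and follows the same route the paper intends: the corollary is presented there without an explicit proof, as the assembly of Theorem \ref{Poisson theorem}, Propositions \ref{Lie algebra} and \ref{2-cocycle}, and the converse diagram construction, which is exactly what you do. Your verification that the $\Ad(H)$-invariance of $(\mathfrak{a},\omega)$ is automatic when $H$ is connected --- $[\mathfrak{h},\mathfrak{a}]\subseteq\mathfrak{a}$ for the stability of $\mathfrak{a}$, and the $2$-cocycle identity evaluated on $u\in\mathfrak{h}=\mathrm{Rad}(\omega)$ for the $\ad(\mathfrak{h})$-invariance of $\omega$ --- is precisely the point the paper leaves implicit, and it is the reason the connectedness hypothesis appears in the statement.
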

\subsection{$r$-matrices in the cases of the reductive pairs and symmetric pairs}
Let $(G,H)$ be a reductive pair with fixed decomposition  $\mathfrak{g}=\mathfrak{h}\oplus\mathfrak{m}$ and  $\mathrm{Ad}_H(\mathfrak{m})=\mathfrak{m}$.  For any $u \in \mathfrak{g}$, $u_{\mathfrak{m}}$ (resp. $u_{\mathfrak{h}}$) denotes the canonical projection of $u$ on $\mathfrak{m}$ (resp. on $\mathfrak{h}$). In this case, we have a canonical identification between $\mathfrak{h}^{\circ}$ and $\mathfrak{m}^*$ the dual vector space of $\mathfrak{m}$, which allows us to transfer the bracket given by (\ref{rbracket}) to a bracket   $\mathfrak{m}^*$; more precisely it is given by
\begin{equation}\label{reductive equation}
	[\alpha,\beta]_{r}=\left(-\mathrm{ad}_{\alpha^{\#}}^{*}\widetilde{\beta}+\mathrm{ad}_{\beta^{\#}}^{*}\widetilde{\alpha}\right)|_{\mathfrak{m}},
\end{equation} 
where $\widetilde{\alpha}$ is the canonical extension of $\alpha$ defined by $\widetilde{\alpha}|_{\mathfrak{m}}=\alpha$ and $\widetilde{\alpha}|_{\mathfrak{h}}=0$. Moreover, one can see easily that the vector space $\mathfrak{m}^H$ endowed with the bracket $(u,v)\mapsto [u,v]_{\mathfrak{m}}$ is a Lie algebra, and the map $u\mapsto u+\mathfrak{h}$ defines a canonical isomorphism with the Lie algebra $((\mathfrak{g}/\mathfrak{h})^H,[\cdot,\cdot]_{\mathfrak{g}/\mathfrak{h}})$. This leads to the following consequence:
\begin{theorem}\label{Poisson reductive}
 Let $(G,H)$ be a reductive pair with fixed decomposition  $\mathfrak{g}=\mathfrak{h}\oplus\mathfrak{m}$ and  $\mathrm{Ad}_H(\mathfrak{m})=\mathfrak{m}$. There is a one to one correspondence between $G$-invariant Poisson bivector fields on $G/H$ and bivectors $r\in(\wedge^{2}\mathfrak{m})^{H}$ satisfying,
 \begin{equation}\label{eq reduc}
 [\alpha,\beta]_{r}^{\#}=[\alpha^{\#},\beta^{\#}]_{\mathfrak{m}},\qquad \forall \alpha,\beta \in \mathfrak{m}^*
 \end{equation} Moreover, if $r$ is a solution of  (\ref{eq reduc}), then $((\mathfrak{m}^*)^H, [\cdot,\cdot]_{r})$ is a Lie algebra and $r_{\#}: ((\mathfrak{m}^*)^H, [\cdot,\cdot]_{r}) \rightarrow (\mathfrak{m}^H, [\cdot,\cdot]_{\mathfrak{m}})$ is a Lie algebras morphism.
\end{theorem}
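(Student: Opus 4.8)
The plan is to reduce the statement to Theorem \ref{Poisson theorem} and Proposition \ref{morphism} by exploiting the canonical identifications furnished by the reductive decomposition. In the reductive case the restriction $q|_{\mathfrak{m}}:\mathfrak{m}\to\mathfrak{g}/\mathfrak{h}$, $u\mapsto u+\mathfrak{h}$, is a linear isomorphism intertwining $\mathrm{Ad}(H)$ on $\mathfrak{m}$ with $\overline{\mathrm{Ad}}(H)$ on $\mathfrak{g}/\mathfrak{h}$ (since $\mathrm{Ad}_H(\mathfrak{m})=\mathfrak{m}$); dually, $\alpha\mapsto\widetilde\alpha$ identifies $\mathfrak{m}^*$ with $\mathfrak{h}^{\circ}\cong(\mathfrak{g}/\mathfrak{h})^*$ in an $\mathrm{Ad}^*(H)$-equivariant way. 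Under these identifications an element $r\in(\wedge^{2}\mathfrak{m})^{H}$ is exactly the datum of an $\mathrm{Ad}(H)$-invariant bivector in $(\wedge^{2}(\mathfrak{g}/\mathfrak{h}))^{\mathrm{Ad}(H)}$, and moreover $(\mathfrak{m}^*)^{H}\cong(\mathfrak{h}^{\circ})^{H}$ and $\mathfrak{m}^{H}\cong(\mathfrak{g}/\mathfrak{h})^{H}$. Hence the correspondence between $G$-invariant skew-symmetric bivector fields on $G/H$ and $(\wedge^{2}\mathfrak{m})^{H}$ is just the general one recalled in Section \ref{Section2}, and it only remains to match the integrability conditions.

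First I would fix the natural lift $\widetilde r:=r$, viewing $\wedge^{2}\mathfrak{m}\subset\wedge^{2}\mathfrak{g}$; it satisfies $\wedge^{2}q(\widetilde r)=r$. Because $r$ has no $\mathfrak{h}$-component, for $\eta=\widetilde\alpha\in\mathfrak{h}^{\circ}$ with $\alpha\in\mathfrak{m}^*$ one gets $\eta^{\widetilde\#}=r_{\#}(\widetilde\alpha)=\alpha^{\#}\in\mathfrak{m}$, so the bracket (\ref{Bracket}) restricted to $\mathfrak{h}^{\circ}$ is precisely the bracket $[\cdot,\cdot]_{r}$ of (\ref{reductive equation}), with $[\widetilde\alpha,\widetilde\beta]_{\widetilde r}=\widetilde{[\alpha,\beta]_{r}}$ and therefore $[\widetilde\alpha,\widetilde\beta]_{\widetilde r}^{\widetilde\#}=[\alpha,\beta]_{r}^{\#}\in\mathfrak{m}$. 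Then, using the formula for $\llbracket r,r\rrbracket$ from the Proposition preceding Theorem \ref{Poisson theorem}, for any $\alpha,\beta,\varepsilon\in\mathfrak{m}^*$,
\begin{equation*}
\llbracket r,r\rrbracket(\widetilde\alpha,\widetilde\beta,\widetilde\varepsilon)
=\langle\widetilde\varepsilon,\,[\widetilde\alpha,\widetilde\beta]_{\widetilde r}^{\widetilde\#}-[\alpha^{\#},\beta^{\#}]\rangle
=\langle\varepsilon,\,[\alpha,\beta]_{r}^{\#}-[\alpha^{\#},\beta^{\#}]_{\mathfrak{m}}\rangle,
\end{equation*}
where the second equality uses that $\widetilde\varepsilon$ kills $\mathfrak{h}$, that $[\alpha,\beta]_{r}^{\#}\in\mathfrak{m}$, and that the $\mathfrak{m}$-component of $[\alpha^{\#},\beta^{\#}]$ is by definition $[\alpha^{\#},\beta^{\#}]_{\mathfrak{m}}$. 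Since both terms lie in $\mathfrak{m}$ and $\varepsilon\in\mathfrak{m}^*$ is arbitrary, $\llbracket r,r\rrbracket=0$ holds iff (\ref{eq reduc}) holds. Combined with Theorem \ref{Poisson theorem}, this gives that $\pi$ is Poisson exactly when its associated $r\in(\wedge^{2}\mathfrak{m})^{H}$ satisfies (\ref{eq reduc}), yielding the claimed bijection.

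For the final assertion I would simply transport Proposition \ref{morphism} through the equivariant identifications. Given a solution $r$ of (\ref{eq reduc}), equivalently of (\ref{Yang Baxter}), Proposition \ref{morphism} asserts that $((\mathfrak{h}^{\circ})^{H},[\cdot,\cdot]_{r})$ is a Lie algebra and $r_{\#}:((\mathfrak{h}^{\circ})^{H},[\cdot,\cdot]_{r})\to((\mathfrak{g}/\mathfrak{h})^{H},[\cdot,\cdot]_{\mathfrak{g}/\mathfrak{h}})$ is a Lie algebra morphism; pulling the source back along $\mathfrak{m}^*\cong\mathfrak{h}^{\circ}$ and pushing the target along $(\mathfrak{g}/\mathfrak{h})^{H}\cong\mathfrak{m}^{H}$ gives exactly that $((\mathfrak{m}^*)^{H},[\cdot,\cdot]_{r})$ is a Lie algebra and $r_{\#}:((\mathfrak{m}^*)^{H},[\cdot,\cdot]_{r})\to(\mathfrak{m}^{H},[\cdot,\cdot]_{\mathfrak{m}})$ is a Lie algebra morphism. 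I expect the only real (though minor) obstacle to be the bookkeeping in the second paragraph: confirming that the choice $\widetilde r=r$ genuinely places $\eta^{\widetilde\#}$ in $\mathfrak{m}$ and that the restriction of $[\cdot,\cdot]_{\widetilde r}$ to $\mathfrak{h}^{\circ}$ coincides with (\ref{reductive equation}), so that the cubic tensor $\llbracket r,r\rrbracket$ collapses to the single identity (\ref{eq reduc}); once the $H$-equivariance of the identifications is checked, the remainder is formal.
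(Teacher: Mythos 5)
Your proposal is correct and follows essentially the same route as the paper: the paper also transports $r$ through the $\mathrm{Ad}(H)$-equivariant isomorphism $I=q\circ\iota:\mathfrak{m}\to\mathfrak{g}/\mathfrak{h}$, chooses the lift $\widetilde{r}_{\#}=\iota\circ r^{\mathfrak m}_{\#}\circ\iota^{*}$ (your $\widetilde r=r$ viewed in $\wedge^2\mathfrak{g}$), and reduces everything to Theorem \ref{Poisson theorem}. The only difference is that you carry out explicitly the ``straightforward computation'' showing $\llbracket r,r\rrbracket(\widetilde\alpha,\widetilde\beta,\widetilde\varepsilon)=\langle\varepsilon,[\alpha,\beta]_r^{\#}-[\alpha^{\#},\beta^{\#}]_{\mathfrak m}\rangle$, which the paper leaves to the reader.
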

\begin{proof} Let $\pi$ be a $G$-invariant skew-symmetric bivector field on $G/H$ and $r\in(\wedge^{2}(\mathfrak{g}/\mathfrak{h}))^{\Ad(H)}$ its associated $\Ad(H)$-invariant bivector. Denote by $\iota:\mathfrak{m}\hookrightarrow \G$ the canonical injection and let $I:\mathfrak{m}\to \mathfrak{g}/\mathfrak{h}$ the canonical isomorphism which is $\Ad(H)$-equivariant and is given by $I=q\circ\iota$. Now, the bivector
	$r^\mathfrak{m}\in(\wedge^{2}\mathfrak{m})$ given by
	$
	r^\mathfrak{m}_{\#}=I^{-1}\circ r_{\#}\circ (I^*)^{-1}
	$,
	is $\Ad(H)$-invariant and satisfy \ref{eq reduc}. Indeed, if we consider the natural bivector on $\G$ given by 
		$$
	\widetilde{r}_{\#}=\iota\circ r^\mathfrak{m}_{\#}\circ \iota^*,
	$$ then we have $\wedge^{2}q (\widetilde{r})=r$, and this allows us to prove the formula by a straitforward computation. \\ Conversely, let $r^\mathfrak{m}\in(\wedge^{2}\mathfrak{m})^{H}$ satisfying \ref{eq reduc}, then in the same way the bivector 
	$$r:=I \circ r^\mathfrak{m}_{\#}\circ I^* \in (\wedge^{2}(\mathfrak{g}/\mathfrak{h}))^{\Ad(H)}$$ satisfy 		$\llbracket r,r\rrbracket=0$. 
	
\end{proof}
 \begin{remark} One can naturally wonder if an $r\in \left(\wedge^2(\mathfrak{g}/\mathfrak{h})\right)^H$ satisfy equation (\ref{Yang Baxter}) just for element in $(\mathfrak{h}^\circ)^H$ then it is satisfied for any element of  $\mathfrak{h}^\circ$. Of course this fact in general it is note true, which will be illustrated in the following example. 
 \end{remark}

\begin{example}
	The Poincare group $G=\mathrm{ISO}(2)$ is the group of affine transformations of $\mathbb{R}^2$ which preserve the Lorentz metric. It is isomorphic to the group of $3\times 3$-matrices of the form 
	\[
	\begin{bmatrix}
		A & x \\
		0 & 1
	\end{bmatrix},
\]
where $A\in O(1,1)$ and $x\in\mathbb{R}^2$. The associated Lie algebra $\mathfrak{g}=\mathfrak{iso}(2)$ has as a basis
\[
e_1=\begin{bmatrix}
	0 & 0 & 1 \\
		0 & 0 & -1 \\
			0 & 0 & 0 
\end{bmatrix},\, e_2=\begin{bmatrix}
0 & 0 & 1 \\
0 & 0 & 1 \\
0 & 0 & 0 
\end{bmatrix},\, e_3=\begin{bmatrix}
0 & 1 & 0 \\
1 & 0 & 0 \\
0 & 0 & 0 
\end{bmatrix}.
\]
The structure equations with respect to this basis are 
\[
[e_1,e_2]=0,\, [e_1,e_3]=e_1,\, [e_2,e_3]=-e_2.
\]
Let $\Gamma$ be the discrete subgroup of $G$ given by 
\[
\gamma=\begin{bmatrix}
1 & 0 & n \\
0 & 1 & 0 \\
0 & 0 & 1 
\end{bmatrix},
\]
where $n\in\mathbb{Z}$. The action of $\Ad(\Gamma)$ on the basis $(e_1,e_2,e_3)$ is given by
\[\mathrm{Ad}_{\gamma}(e_1)=e_1,\,\mathrm{Ad}_{\gamma}(e_2)=e_2,\, \mathrm{Ad}_{\gamma}(e_3)=e_3+\frac{n}{2}(e_1-e_2). \]
A direct computation shows that any $\Ad(\Gamma)$-invariant $r$-matrix on $\G$ is of the form $$r=\lambda e_1\wedge e_2,$$
where $\lambda\in\R$. If we take $s=(e_1-e_2)\wedge e_3$ then obviously $s$ is $\Ad(\Gamma)$-invariant and satisfy equation (\ref{reductive equation}) for any element in $(\mathfrak{g}^*)^\Gamma=\R e_{3}^{*}$ but it is not an $r$-matrix.
\end{example}

Recall that if $\Gamma$ is a lattice in $G$ and $G/\Gamma$ admit a $G$-invariant symplectic structure, then $G$ is abelian and $G/\Gamma$ is a torus (see \cite[Theorem 3.7]{Medina}). Hence, if $G$ is non abelian then any $G$-invariant Poisson tensor on $G/\Gamma$ must be degenerate. In the following, we will study an example.
\begin{example}
	Denote by 
	\[ \displaystyle \mathcal{H}_{2n+1}=\left\{\left.{X(a,b,c):=\begin{pmatrix}1&a^t&c\\0&I_n&b\\0&0&1\\\end{pmatrix}}~\right|~a,b\in\mathbb{R}^n,c\in \R\right\} \]
	the $(2n+1)$-dimensional Heisenberg Lie group and by $\Gamma$ the lattice in $\mathcal{H}_{2n+1}$ given by 
	\[\displaystyle \Gamma=\left\{ \gamma(m,q,p):=\left.{\begin{pmatrix}1&m^t&p\\0&I_n&q\\0&0&1\\\end{pmatrix}}~\right|~m,q\in\mathbb{Z}^n,p\in \mathbb{Z}\right\}.
\]
	The Lie algebra of $\mathcal{H}_{2n+1}$ is given by
	\[ \mathfrak{h}_{2n+1}=\{u_1,\ldots,u_n,v_1,\ldots,v_n,w \,|\, [u_i,v_i]=w\}, \]
	where $u_i=X(e_i,0,0),\, v_i=X(0,e_i,0), \, w=X(0,0,1)$
	and $(e_1,\ldots,e_n)$ is the canonical basis of $\mathbb{R}^n$.
	One can check easily that
	\[\left\{
	\begin{array}{lll}
		\mathrm{Ad}_{\gamma(m,q,p)}(u_i)=u_i-(e_{i}^{t}q)w,& \\
		
		\mathrm{Ad}_{\gamma(m,q,p)}(v_i)=v_i+(m^te_{i})w,& \\
		
		\mathrm{Ad}_{\gamma(m,q,p)}(w)=w& 
	\end{array}
	\right. \] 
	Hence 
	\[(\wedge^2\mathfrak{h}_{2n+1})^\Gamma=\left\{\left(\sum_{i=1}^{n}\lambda_i u_i+\mu_i v_i\right)\wedge w \,|\, \lambda_i,\mu_i\in\R \right\}.\]
	Since the only non zero bracket is $[u_i,v_i]=w$ it follows that any element in $(\wedge^2\mathfrak{h}_{2n+1})^\Gamma$ define an $\mathcal{H}_{2n+1}$-invariant Poisson bivector field on the compact Heisenberg manifold $\mathcal{H}_{2n+1}/\Gamma$. 
\end{example} 

Now suppose that $(G,H)$ is a symmetric pair, that is we have a canonical reductive decomposition$ \mathfrak{g}=\mathfrak{h}\oplus\mathfrak{m},\,\mathrm{Ad}_H(\mathfrak{m})=\mathfrak{m}$ such that $[\mathfrak{m},\mathfrak{m}]\subset\mathfrak{h}$. We are going to see that, any element of $(\wedge^{2}\mathfrak{m})^H$ is a solution of the Yang-Baxter equation.
\begin{corollary}\label{CorSym}
 There is a one to one correspondence between $G$-invariant Poisson bivector fields on $G/H$ and bivectors $r\in(\wedge^{2}\mathfrak{m})^H$.
\end{corollary}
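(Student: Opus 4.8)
The plan is to deduce this directly from Theorem \ref{Poisson reductive}, observing that the defining equation (\ref{eq reduc}) becomes vacuous in the symmetric setting. A symmetric pair is in particular a reductive pair, with the canonical decomposition $\mathfrak{g}=\mathfrak{h}\oplus\mathfrak{m}$ and $\mathrm{Ad}_H(\mathfrak{m})=\mathfrak{m}$, so Theorem \ref{Poisson reductive} already furnishes a bijection between $G$-invariant Poisson bivector fields on $G/H$ and those $r\in(\wedge^{2}\mathfrak{m})^H$ satisfying $[\alpha,\beta]_{r}^{\#}=[\alpha^{\#},\beta^{\#}]_{\mathfrak{m}}$ for all $\alpha,\beta\in\mathfrak{m}^*$. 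Thus it suffices to show that this equation holds automatically for every $r\in(\wedge^{2}\mathfrak{m})^H$, which I would do by checking that both of its sides vanish identically.

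For the right-hand side, note that $r\in\wedge^{2}\mathfrak{m}$ forces $\alpha^{\#}=r_{\#}(\alpha)\in\mathfrak{m}$ for each $\alpha\in\mathfrak{m}^*$. Hence $[\alpha^{\#},\beta^{\#}]\in[\mathfrak{m},\mathfrak{m}]\subset\mathfrak{h}$, so its $\mathfrak{m}$-component $[\alpha^{\#},\beta^{\#}]_{\mathfrak{m}}$ is zero. This is the only point where the symmetric condition $[\mathfrak{m},\mathfrak{m}]\subset\mathfrak{h}$ is used.

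For the left-hand side, I would use the explicit formula (\ref{reductive equation}), namely $[\alpha,\beta]_{r}=\left(-\mathrm{ad}_{\alpha^{\#}}^{*}\widetilde{\beta}+\mathrm{ad}_{\beta^{\#}}^{*}\widetilde{\alpha}\right)|_{\mathfrak{m}}$, and evaluate it against an arbitrary $v\in\mathfrak{m}$. Since $\alpha^{\#},\beta^{\#}\in\mathfrak{m}$ and $v\in\mathfrak{m}$, both brackets $[\alpha^{\#},v]$ and $[\beta^{\#},v]$ lie in $[\mathfrak{m},\mathfrak{m}]\subset\mathfrak{h}$; because the canonical extensions $\widetilde{\alpha},\widetilde{\beta}$ vanish on $\mathfrak{h}$, the pairings $\langle\widetilde{\beta},[\alpha^{\#},v]\rangle$ and $\langle\widetilde{\alpha},[\beta^{\#},v]\rangle$ are both zero. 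Therefore $[\alpha,\beta]_{r}=0$, and a fortiori $[\alpha,\beta]_{r}^{\#}=0$.

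Both sides of (\ref{eq reduc}) thus vanish, so the equation is trivially satisfied by every $r\in(\wedge^{2}\mathfrak{m})^H$, and the bijection of Theorem \ref{Poisson reductive} specializes to the claimed correspondence. I do not expect any genuine obstacle here beyond the bookkeeping: the entire content is the structural identity $[\mathfrak{m},\mathfrak{m}]\subset\mathfrak{h}$, which simultaneously annihilates the bracket $[\cdot,\cdot]_{\mathfrak{m}}$ on $\mathfrak{m}$ and the $\mathfrak{m}^*$-valued bracket $[\cdot,\cdot]_{r}$, leaving the Yang--Baxter type condition empty.
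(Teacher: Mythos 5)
Your proposal is correct and is essentially the paper's own argument: specialize Theorem \ref{Poisson reductive} and use $[\mathfrak{m},\mathfrak{m}]\subset\mathfrak{h}$ to show that both sides of equation (\ref{eq reduc}) vanish, so the Yang--Baxter type condition is automatic. Your verification that $[\alpha,\beta]_r=0$ via pairing against $v\in\mathfrak{m}$ just fills in a detail the paper leaves implicit.
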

\begin{proof} Let $r\in(\wedge^{2}\mathfrak{m})^H$. Since, $[\mathfrak{m},\mathfrak{m}]\subset\mathfrak{h}$ it follows that for any $\alpha,\beta\in\mathfrak{m}^*$,  $[\alpha^{\#},\beta^{\#}]_{\mathfrak{m}}=0$, and from (\ref{reductive equation}) we get that $[\alpha,\beta]_r=0$. 
\end{proof}
\begin{remark}
If \( H \) is a compact and connected Lie group, then we can use the Haar integral, normalized so that \( \int_H da = 1 \), to compute the dimension of \( (\wedge^{2}\mathfrak{m})^H \). More precisely, according to \cite[p. 53]{Greub}, one can consider the function \( f:\mathbb{R}\to\mathbb{R} \) defined by  
\[
f(t)=\int_{H}\det(\rho(a)+t \,\mathrm{Id}_\mathfrak{m})\, da,
\]  
where \( \rho \) denotes the adjoint representation of \( H \) on \( \mathfrak{m} \). Then, the result asserts that
\[
\dim (\wedge^{2}\mathfrak{m})^H = \frac{1}{2} f^{\prime\prime}(0).
\]
\end{remark}

The following subsections illustrate Corollary \ref{CorSym}.
\subsection{$\mathrm{GL}_n^+(\mathbb{R}) $-invariant Poisson structures on $ \mathcal{S}_n^{++}(\mathbb{R})$}
Let $n\geq 2$ be an integer. The space of real symmetric positive definite matrices \( \mathcal{S}_n^{++}(\mathbb{R}) \),  is an open subset of the vector space \( \mathfrak{m} := \mathcal{S}_n(\mathbb{R}) \) of real symmetric \( n \times n \) matrices. We note that the connected Lie group \( G := \mathrm{GL}_n^+(\mathbb{R}) \) acts transitively on \( \mathcal{S}_n^{++}(\mathbb{R}) \) by the action \( g \cdot x = g x g^T \), where \( g \in G \) and \( x \in \mathcal{S}_n^{++}(\mathbb{R}) \). The isotropy subgroup of \( G \) at the identity matrix \( I_n \) is \( H := \mathrm{SO}_n(\mathbb{R}) \). Hence, we have a diffeomorphism \( G/H \cong \mathcal{S}_n^{++}(\mathbb{R}) \), given by \( \bar{g} \mapsto g g^T \). The pair \( (G, H) \) is symmetric, and the canonical decomposition of the Lie algebra \( \mathfrak{g} = \mathfrak{gl}_n(\mathbb{R}) \) is $
\mathfrak{g} = \mathfrak{so}_n(\mathbb{R}) \oplus \mathfrak{m}.$

\begin{theorem}\label{S_n}
There are no non-trivial \( \mathrm{GL}_n^+(\mathbb{R}) \)-invariant Poisson structures on \( \mathcal{S}_n^{++}(\mathbb{R}) \) for \( n = 4k \) or \( n = 4k+3 \).
\end{theorem}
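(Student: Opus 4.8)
The plan is to reduce the statement, via Corollary~\ref{CorSym}, to an elementary parity count. Since $(G,H)=(\mathrm{GL}_n^+(\R),\mathrm{SO}_n(\R))$ is a symmetric pair with $\mathfrak{m}=\mathcal{S}_n(\R)$, that corollary identifies the $G$-invariant Poisson structures on $\mathcal{S}_n^{++}(\R)$ with the space $(\wedge^2\mathfrak{m})^H=(\wedge^2\mathcal{S}_n(\R))^{\mathrm{SO}_n(\R)}$, where $\mathrm{SO}_n(\R)$ acts by conjugation $X\mapsto aXa^{T}$. It therefore suffices to show that this invariant space is trivial when $n=4k$ or $n=4k+3$.

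First I would split the module as $\mathfrak{m}=\mathcal{S}_n(\R)=\R I_n\oplus\mathcal{S}_n^{0}(\R)$, the $\mathrm{SO}_n$-invariant line through the identity together with the trace-free symmetric matrices. The key structural input is that, for $n\geq 3$, the space $\mathcal{S}_n^{0}(\R)$ is a nontrivial \emph{irreducible} real $\mathrm{SO}_n(\R)$-module; in particular it carries neither a nonzero invariant vector nor a nonzero invariant linear functional. Next I would view a class $r\in(\wedge^2\mathfrak{m})^H$ as an $\mathrm{SO}_n$-invariant skew-symmetric bilinear form $\omega$ on $\mathfrak{m}$, using the invariant trace form $\langle X,Y\rangle=\mathrm{tr}(XY)$ to identify $\mathfrak{m}^{*}\cong\mathfrak{m}$. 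Its radical $\mathrm{Rad}(\omega)$ is then an $\mathrm{SO}_n$-invariant subspace. Because there is no invariant pairing between $\R I_n$ and $\mathcal{S}_n^{0}(\R)$ (the latter having no invariant functional), the line $\R I_n$ must lie in $\mathrm{Rad}(\omega)$, so $\omega$ descends to an invariant skew form $\bar\omega$ on $\mathcal{S}_n^{0}(\R)$.

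The irreducibility of $\mathcal{S}_n^{0}(\R)$ now forces the invariant subspace $\mathrm{Rad}(\bar\omega)$ to be either $0$ or all of $\mathcal{S}_n^{0}(\R)$; hence $\bar\omega$ is either nondegenerate or identically zero. Since a nondegenerate skew form can exist only in even dimension, if $\dim\mathcal{S}_n^{0}(\R)$ is odd then $\bar\omega=0$, whence $\omega=0$ and $r=0$. It remains to read off the parity: $\dim\mathcal{S}_n^{0}(\R)=\tfrac{n(n+1)}{2}-1=\tfrac{(n-1)(n+2)}{2}$, and a direct check modulo $4$ shows this integer is odd exactly when $n\equiv 0$ or $3\pmod 4$, that is, precisely for $n=4k$ and $n=4k+3$. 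This gives the theorem, and also explains the restriction: for $n\equiv 1,2\pmod 4$ the dimension is even and the parity obstruction is silent.

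The hard part will be the representation-theoretic input of the second paragraph, namely that $\mathcal{S}_n^{0}(\R)$ really is an irreducible real $\mathrm{SO}_n(\R)$-module with no invariant vector, since the whole radical argument rests on this. The point deserving the most care is the even case $n=4k$: one must verify that restricting the conjugation action from $\mathrm{O}_n(\R)$ to $\mathrm{SO}_n(\R)$ does not split $\mathcal{S}_n^{0}(\R)$ into smaller invariant pieces (which would break the dichotomy ``$\bar\omega$ nondegenerate or zero''); for $n\geq 3$ this irreducibility is standard, the module having highest weight $2\varepsilon_1$.
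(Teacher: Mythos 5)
Your proof is correct and follows essentially the same route as the paper: both reduce, via the symmetric-pair correspondence, to the fact that the $\mathrm{SO}_n(\R)$-irreducibility of the traceless symmetric matrices $W$ forces any invariant skew pairing on $\mathfrak{m}$ to vanish on $\R I_n$ and to be either zero or nondegenerate on $W$, and then kill the nondegenerate case by the odd parity of $\dim W=\tfrac{n(n+1)}{2}-1$ when $n\equiv 0,3\pmod 4$. The only differences are presentational — you phrase the dichotomy through the radical of the invariant $2$-form where the paper uses the associated skew-adjoint equivariant endomorphism and Schur's lemma, and you defer the irreducibility of $W$ (the paper's Lemma \ref{Irred}) to standard representation theory (highest weight $2\varepsilon_1$) rather than proving it by the explicit matrix computation given there.
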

In the following lemma, we will consider the vector subspace $W:=\{ u\in \mathfrak{m} \,|\, \mathrm{tr}(u)=0 \}$, which is clearly invariant
by the adjoint representation $\Ad:H\to  \mathrm{GL}(\mathfrak{m})$.

\begin{lemma}\label{Irred} 
	The induced representation of $H$ on the vector space $W$ is irreducible. 
\end{lemma}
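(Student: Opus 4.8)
The plan is to exploit the connectedness of $H=\mathrm{SO}_n(\mathbb{R})$, which reduces irreducibility of the group representation to irreducibility of the associated representation of the Lie algebra $\mathfrak{so}_n(\mathbb{R})$ on $W$. The latter action is the restriction of the adjoint action: for $X\in\mathfrak{so}_n(\mathbb{R})$ and $u\in\mathfrak{m}$ one checks that $[X,u]=Xu-uX$ is again symmetric and traceless, so $W$ is stable and the two notions of invariant subspace coincide. Throughout I would work in the standard bases: write $E_{ij}$ for the elementary matrices, $A_{ij}:=E_{ij}-E_{ji}$ (for $i<j$) for a basis of $\mathfrak{so}_n(\mathbb{R})$, and take the off-diagonal $S_{ij}:=E_{ij}+E_{ji}$ (for $i<j$) together with the traceless diagonal differences $E_{ii}-E_{jj}$ as a spanning set of $W$, as a dimension count confirms.

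Now let $V\subseteq W$ be a nonzero invariant subspace and pick $0\neq u\in V$. First I would use the group action to normalize $u$: since $u$ is real symmetric it is orthogonally diagonalizable, and by multiplying a diagonalizing orthogonal matrix by a suitable diagonal sign matrix (which commutes with the diagonal factor and so still conjugates $u$ to diagonal form) one may take the conjugating matrix in $\mathrm{SO}_n(\mathbb{R})$; this is the only place where $n\ge 2$ is used. Hence $V$ contains a nonzero traceless diagonal matrix $d=\mathrm{diag}(\lambda_1,\dots,\lambda_n)$. Because $d$ is traceless and nonzero there are indices with $\lambda_i\neq\lambda_j$, and the routine bracket computation $[A_{ij},E_{kk}]=(\delta_{jk}-\delta_{ik})S_{ij}$ gives $[A_{ij},d]=(\lambda_j-\lambda_i)S_{ij}$; since $\lambda_i\neq\lambda_j$ this shows $S_{ij}\in V$.

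The remaining step is to spread this single off-diagonal generator across all of $W$ using two further elementary identities, $[A_{ij},S_{ij}]=2(E_{ii}-E_{jj})$ and, for distinct $i,j,k$, $[A_{jk},S_{ij}]=-S_{ik}$. The second identity lets me transport $S_{ij}$ to $S_{ik}$ one index at a time; iterating and using that the index graph is connected for $n\ge 2$, I obtain every $S_{pq}\in V$. The first identity then yields every diagonal difference $E_{pp}-E_{qq}\in V$. Since the $S_{pq}$ and the $E_{pp}-E_{qq}$ together span $W$, we conclude $V=W$, which proves irreducibility.

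I expect the main obstacle to be organizational rather than conceptual: keeping the index bookkeeping in the bracket identities correct and verifying that the normalization genuinely lands inside $\mathrm{SO}_n(\mathbb{R})$ rather than merely $\mathrm{O}_n(\mathbb{R})$. The conceptual heart of the argument is the single observation that one well-chosen bracket converts a diagonal element into an off-diagonal one, after which connectivity of the indices propagates membership across an entire spanning set of $W$.
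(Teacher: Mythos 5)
Your proof is correct and follows essentially the same route as the paper's: diagonalize a nonzero element of the invariant subspace by an element of $\mathrm{SO}_n(\mathbb{R})$, use tracelessness to find two distinct eigenvalues, and bracket with $E_{ij}-E_{ji}$ to produce an off-diagonal symmetric generator of the subspace. The only difference is the endgame --- the paper conjugates that generator back to $\mathrm{diag}(1,-1,0,\dots,0)$ and uses that every element of $W$ is $\mathrm{SO}_n(\mathbb{R})$-conjugate to a traceless diagonal matrix, whereas you propagate through a spanning set of $W$ by further Lie-algebra brackets; both finishes are valid.
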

\begin{proof}
Let \( V \subset W \) be a non-trivial \( H \)-invariant vector subspace. Denote by \( D_n(\mathbb{R}) \) the vector space of diagonal matrices. Let \( u_0 \in V \) with \( u_0 \neq 0 \). Then, there exists \( a \in H \) such that \( au_0a^t = d = \mathrm{diag}(\lambda_1,\ldots,\lambda_n) \neq 0 \), $\lambda_1+\cdots+\lambda_n=0$. Without loss of generality, we can suppose that \( \lambda_1 - \lambda_2 \neq 0 \). Now, denote by  $F_{ij}:=E_{ij}-E_{ji}$, for  $1\leq i< j\leq n$ the canonical basis of $ \mathfrak{so}_n(\mathbb{R})$ where $E_{ij}$ is the canonical basis of $\mathfrak{gl}_{n}(\mathbb{R})$. Since $V$ is $H$-invariant, the bracket $v:=[F_{12},d]$ belongs to $V$. A direct computation shows
\[
 v=
\begin{bmatrix}
	0 & \lambda_2 - \lambda_1 & 0 & \cdots & 0 \\
	\lambda_2 - \lambda_1 & 0 & 0 & \cdots & 0 \\
	0 & 0 & 0 & \cdots & 0 \\
	\vdots & \vdots & \vdots & \ddots & \vdots \\
	0 & 0 & 0 & \cdots & 0
\end{bmatrix}.
\]
And it is clear that there exists $P\in  \mathrm{SO}_n(\mathbb{R})$ such that $$P  v P^t = \mathrm{diag}(\lambda_2 - \lambda_1, \lambda_1 - \lambda_2, 0, \dots, 0),$$ which implies that
$
d_2:= \mathrm{diag}(1, -1, 0, \dots, 0)
$ belongs to $V$. 
Now, the canonical basis of the vector space \( D_n(\mathbb{R})\cap W \) is the family $
\{d_2, \dots, d_{n} \}
$,
where each \( d_i \) is defined as: $d_i:=\mathrm{diag}(1, \dots,-1,\dots)$, where the \(-1 \) is in the \( (i, i) \)-position, all other entries are \( 0 \). It is easy to see that for each $i=3,\dots,n$ there exits $q_i\in  \mathrm{SO}_n(\mathbb{R})$ such that $d_i=q_id_2q_i^t$; and since $V$ is $\mathrm{SO}_n(\mathbb{R})$-invariant, we conclude that the vector space $ D_n(\mathbb{R}) \cap W$ is included in $V$. We are then in position to prove the inclusion of $W$ in $V$: Let \( u \in W \). Then, there exists \( Q\in  \mathrm{SO}_n(\mathbb{R}) \) such that  
$
Q u Q^t \in D_n(\mathbb{R}) \cap W $ which is included in $V$; thus $u\in V$. We obtain  \( V = W \).
\end{proof}
We will denote by: 
$$
\prec, \succ : \mathfrak{m} \times \mathfrak{m} \to \mathbb{R},\qquad  (u, v) \mapsto \mathrm{tr}(uv) $$ the canonical scalar product on \( \mathfrak{m} \). Then, the vector space \( (\wedge^2 \mathfrak{m})^H \) is isomorphic to the space of \( H \)-equivariant skew-symmetric endomorphisms of \( \mathfrak{m} \) with respect to \( \prec, \succ \).
\begin{proof}[Proof of Theorem \ref{S_n}]
Let $R:\mathfrak{m} \to \mathfrak{m}$ be an  \( H \)-equivariant skew-symmetric endomorphisms. We have  
\[
 \mathrm{tr}(R(I_n)) =\prec R(I_n), I_n \succ = -\prec I_n, R(I_n) \succ=- \mathrm{tr}(R(I_n))\]
thus \( \mathrm{tr}(R(I_n)) = 0 \), hence \( R(I_n) \in W \). On the other hand, from the equivariance of $R$ we obtain that for any \( a \in H \), we have  
\[
a R(I_n) a^t = R(I_n).
\]  
Thus, according to Lemma \ref{Irred}, we conclude that \( R(I_n) = 0 \).  \\ Now, let \( u \in \mathfrak{m} \). Then, we obtain  
\[
 \mathrm{tr}(R(u)) =\prec R(u), I_n \succ = -\prec u, R(I_n) \succ = 0,
\]  
which implies that \( R(u) \in W \). Therefore, the restriction \( R_{|W}  \) can be viewed as an \( H \)-equivariant and \( \prec, \succ \)-skew symmetric endomorphism of $W$. Hence, by Lemma \ref{Irred}, we have either \( R_{|W} = 0 \) or \( R_{|W} \) is an isomorphism. In the cases when \( n = 4k \) or \( n = 4k+3 \), the dimension of $W$, $\dim W=\frac{n^2+n}{2}-1$ is an odd number, this implies that \( R_{|W} \) cannot be an isomorphism. This proves that $R=0$.
\end{proof}
\begin{remark}
	Let $n=2$ then,
	\[ (\wedge^2\mathfrak{m})^H=\{ \lambda e_1\wedge e_2\,| \,\lambda\in\R \},\]
	where $e_1= \begin{bmatrix}
		1 & 0 \\ 
		0 & -1
	\end{bmatrix} $ and $e_2= \begin{bmatrix}
	0 & 1 \\ 
	1 & 0
\end{bmatrix} $.
\end{remark}
 \subsection{$\mathrm{SO}_{4}(\mathbb{R})$-invariant Poisson structures on $\mathrm{G}^{+}_{2}(\mathbb{R}^4)$}\label{Grassman manifold} 	
 The oriented Grassmann manifold \( \mathrm{G}^{+}_{2}(\mathbb{R}^4) \) is the space of all 
 \( 2 \)-dimensional oriented subspaces of \( \mathbb{R}^4 \).
 We know that \( G = \mathrm{SO}_{4}(\mathbb{R}) \) acts transitively on \( \mathrm{G}^{+}_{2}(\mathbb{R}^4) \). Indeed, if \( V \in \mathrm{G}^{+}_{2}(\mathbb{R}^4) \) and \( g \in G \), then the action of \( g \) on \( V \) is given by \( g \cdot V = g(V) \). The isotropy subgroup of a given oriented subspace \( V \) is isomorphic to \( \mathrm{SO}_{2}(\mathbb{R}) \times \mathrm{SO}_{2}(\mathbb{R}) \). 
 
 This arises because \( \mathrm{SO}_{2}(\mathbb{R}) \) acts on the \( 2 \)-dimensional subspace \( V \), preserving its orientation, and another copy of \( \mathrm{SO}_{2}(\mathbb{R}) \) acts on the complementary \( 2 \)-dimensional subspace \( V^{\perp} \) (the orthogonal complement of \( V \) in \( \mathbb{R}^{4} \) with respect to the canonical scalar product of \( \mathbb{R}^{4} \)), preserving the orientation of this subspace as well. Hence,  $\mathrm{G}^{+}_{2}(\mathbb{R}^4) \cong G/H$, where  $H= 
 \left\{ 
 \begin{bmatrix}
 	a & 0 \\ 
 	0 & b
 \end{bmatrix} 
 \;\middle|\;
 a,b \in \mathrm{SO}_{2}(\mathbb{R})
 \right\}$ is a symmetric space with the canonical decomposition  $\mathfrak{g}=\mathfrak{h}\oplus\mathfrak{m}$,
 where  $\mathfrak{h}= 
 \left\{ 
 \begin{bmatrix}
 	u & 0 \\ 
 	0 & v
 \end{bmatrix} 
 \;\middle|\;
 u,v\in \mathfrak{so}_{2}(\mathbb{R})
 \right\}$,
 and  $\mathfrak{m} = \left\{ 
 \begin{bmatrix}
 	0 &  w\\ 
 	-w^t & 0
 \end{bmatrix}
 \;\middle|\;
 w \in \mathfrak{gl}_{2}(\mathbb{R})
 \right\}$.
 The action of \( H \) on \( \mathfrak{m} \) is given by  $
 \begin{bmatrix}
 	a & 0 \\ 
 	0 & b
 \end{bmatrix} 
 \cdot
 \begin{bmatrix}
 	0 & w \\ 
 	-w^t & 0
 \end{bmatrix}
 =
 \begin{bmatrix}
 	0 & awb^t \\ 
 	-bw^ta^t & 0
 \end{bmatrix}$.
 In what follows, we will describe the vector space \( (\wedge^2\mathfrak{m})^H \). The vector space \( \mathfrak{m} \) has as a basis  $\{  e_1=E_{13}-E_{31}, e_2=E_{23}-E_{32}, e_3=E_{14}-E_{41}, e_4=E_{24}-E_{42}\}$.
 For any \( \theta, t \in\mathbb{R} \), let  
 \[
 a(\theta,t)=\begin{bmatrix}
 	\cos(\theta) & \sin(\theta) & 0 & 0 \\ 
 	-\sin(\theta) & \cos(\theta) & 0 & 0 \\ 
 	0 & 0 & \cos(t) & -\sin(t) \\ 
 	0 & 0 & \sin(t) & \cos(t)
 \end{bmatrix}.
 \]
 The action of \( a(\theta,t) \) on the basis \( (e_1,e_2,e_3,e_4) \) is given by  
 \begin{equation*}
 	\begin{cases}
 		a(\theta,t)\cdot e_1 = \cos(\theta)\cos(t)e_1 - \sin(\theta)\cos(t)e_2 + \cos(\theta)\sin(t)e_3 - \sin(\theta)\sin(t)e_4, & \\  
 		a(\theta,t)\cdot e_2 = \sin(\theta)\cos(t)e_1 + \cos(\theta)\cos(t)e_2 + \sin(\theta)\sin(t)e_3 + \cos(\theta)\sin(t)e_4, & \\  
 		a(\theta,t)\cdot e_3 = -\cos(\theta)\sin(t)e_1 + \sin(\theta)\sin(t)e_2 + \cos(\theta)\cos(t)e_3 - \sin(\theta)\cos(t)e_4, & \\  
 		a(\theta,t)\cdot e_4 = -\sin(\theta)\sin(t)e_1 - \cos(\theta)\sin(t)e_2 + \sin(\theta)\cos(t)e_3 + \cos(\theta)\cos(t)e_4. &  
 	\end{cases} 
 \end{equation*}
 Using these equations, we can prove that  
 \[
 (\wedge^2\mathfrak{m})^H = 
 \left\{ e_1\wedge (\lambda e_2+\mu e_3)+(\mu e_2+\lambda e_3)\wedge e_4
 \;\middle|\;
 \lambda,\mu \in\mathbb{R}
 \right\}.
 \]
 \vskip 0.5cm
 
In the following, let's recap and give more details. If we come back to the initial description of  $r$-matrices, it can be easily shown that  in the case of reductive pairs we have
 \begin{theorem}\label{PoissonReductive2}
 	Let $(G,H)$ be a reductive pair with fixed decomposition  $\mathfrak{g}=\mathfrak{h}\oplus\mathfrak{m}$ and  $\mathrm{Ad}_H(\mathfrak{m})=\mathfrak{m}$. There is a one to one correspondence between $G$-invariant Poisson bivector fields on $G/H$ and the pairs $(W ,\omega )$ where $W$ is an 
 	$\Ad(H)$-invariant vector subspaces $W \subset \mathfrak{m}$ and $\omega$ is an $\Ad(H)$-invariant symplectic form on it such that the two properties hold:
 	\begin{enumerate}
 		\item For all $x,y\in W$, $[x,y]_\mathfrak{m} \in W$,
 		\item for all $x,y,z\in W$,
 		\begin{equation}\label{Cocy}
 		\omega\left([x,y]_\mathfrak{m},z\right)+\omega\left([z,x]_\mathfrak{m},y\right)+\omega\left([y,z]_\mathfrak{m},x\right)=0,\qquad\forall\,x,y,z\in W.
 		\end{equation}
 	\end{enumerate}
 \end{theorem}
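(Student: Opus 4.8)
The plan is to reduce the statement to the correspondence already obtained in the reductive setting, namely Theorem \ref{Poisson reductive}, by translating the datum $r\in(\wedge^{2}\mathfrak{m})^{H}$ into a pair $(W,\omega)$ and reading the $r$-matrix equation (\ref{eq reduc}) off the two structural conditions. By Theorem \ref{Poisson reductive} a $G$-invariant Poisson bivector field on $G/H$ is the same datum as a bivector $r\in(\wedge^{2}\mathfrak{m})^{H}$ solving (\ref{eq reduc}); hence it suffices to produce a bijection, intertwining the $\Ad(H)$-action, between such $r$ and the pairs $(W,\omega)$ of the statement.

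First I would set up the purely linear-algebraic correspondence between skew bivectors and pairs (subspace, nondegenerate skew form). For $r\in\wedge^{2}\mathfrak{m}$ put $W:=\mathrm{Im}(r_{\#})$ and recall that skew-symmetry forces $\ker(r_{\#})=W^{\circ}$, so that $r_{\#}$ descends to an isomorphism $\overline{r_{\#}}:W^{*}\xrightarrow{\cong}W$; its inverse defines a nondegenerate skew (i.e. symplectic) form $\omega$ on $W$, characterized by $\omega(\alpha^{\#},\beta^{\#})=\langle\alpha,\beta^{\#}\rangle$ for $\alpha,\beta\in\mathfrak{m}^{*}$. Conversely, from a pair $(W,\omega)$ one recovers $r$ by extending $\omega^{-1}$ by zero, i.e. $r_{\#}=\iota_{W}\circ\omega_{\#}^{-1}\circ\iota_{W}^{*}$ with $\iota_{W}:W\hookrightarrow\mathfrak{m}$. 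These two assignments are mutually inverse, and $r$ is $\Ad(H)$-invariant if and only if both $W$ and $\omega$ are, which settles the invariance part of the claim.

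Next I would translate the Poisson condition into (1) and (2). Following Proposition \ref{Lie algebra}, the subalgebra attached to $r$ is $\mathfrak{a}_{r}=\mathfrak{h}\oplus W$ under the identification $I=q\circ\iota$; since $[\mathfrak{h},\mathfrak{m}]\subset\mathfrak{m}$ and $W$ is $\Ad(H)$-invariant (so $[\mathfrak{h},W]\subset W$), the assertion that $\mathfrak{a}_{r}$ is closed under the bracket is equivalent to $[W,W]_{\mathfrak{m}}\subset W$, which is condition (1). The form $\omega_{r}$ of Proposition \ref{2-cocycle} has $\mathrm{Rad}(\omega_{r})=\mathfrak{h}$ and therefore descends to exactly the $\omega$ above on $\mathfrak{a}_{r}/\mathfrak{h}\cong W$; evaluating its $2$-cocycle identity on representatives $x,y,z\in W$ and discarding the $\mathfrak{h}$-components $[x,y]_{\mathfrak{h}}\in\mathrm{Rad}(\omega_{r})$ turns $\omega_{r}([x,y],z)+\mathrm{c.p.}=0$ into precisely (\ref{Cocy}), i.e. condition (2). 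Run backwards, this same translation produces the pair $(W,\omega)$ from any solution, so the correspondence is a bijection.

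For the converse implication I would, given $(W,\omega)$ satisfying (1) and (2), form $r$ as above and verify (\ref{eq reduc}) by a direct computation patterned on the proof of the third assertion of Proposition \ref{2-cocycle}. Writing $x=\alpha^{\#},\,y=\beta^{\#},\,z=\varepsilon^{\#}\in W$ and using $\omega(\alpha^{\#},\beta^{\#})=\langle\alpha,\beta^{\#}\rangle$ together with the definition (\ref{reductive equation}) of $[\cdot,\cdot]_{r}$, one rewrites $\omega(\,\cdot\,,z)$ applied to both sides of (\ref{eq reduc}) as the cyclic sum in (\ref{Cocy}). The main obstacle is exactly this last equivalence: (\ref{eq reduc}) is a single identity in $\mathfrak{m}$ required for all $\alpha,\beta\in\mathfrak{m}^{*}$, and the care lies in splitting it cleanly into an ``image lands in $W$'' part and an ``$\omega$-test is a cocycle'' part. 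Here condition (1) guarantees that the right-hand side $[\alpha^{\#},\beta^{\#}]_{\mathfrak{m}}$ already lies in $W$, so that both sides of (\ref{eq reduc}) lie in $W$, where nondegeneracy of $\omega$ lets one detect equality of the two vectors; condition (2) then supplies the precise cyclic cancellation forcing that equality, whence $r$ solves (\ref{eq reduc}) and Theorem \ref{Poisson reductive} yields the desired $G$-invariant Poisson bivector field.
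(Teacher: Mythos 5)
Your proposal is correct, and it follows essentially the route the paper itself indicates (the paper omits the proof, merely asserting that the statement follows from ``the initial description of $r$-matrices''): you identify $r\in(\wedge^{2}\mathfrak{m})^{H}$ with the pair $\bigl(W=\mathrm{Im}(r_{\#}),\,\omega=(r_{\#}|_{W^{*}})^{-1}\bigr)$, which is exactly the reductive specialization of Propositions \ref{Lie algebra} and \ref{2-cocycle} where $\mathfrak{a}_{r}=\mathfrak{h}\oplus W$ and $\omega_{r}$ descends to $\omega$ on $\mathfrak{a}_{r}/\mathfrak{h}\cong W$. Your converse argument is also sound: since $\alpha^{\#},\beta^{\#}$ automatically lie in $W$, condition (1) places both sides of (\ref{eq reduc}) in $W$, and testing against $\omega(\cdot,z)$ for $z\in W$ reduces the identity to the cyclic relation (\ref{Cocy}), so nondegeneracy of $\omega$ on $W$ finishes the proof.
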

As consequences, we have:\begin{itemize}
	\item For  a symmetric pair $(G,H)$, we obtain that there is a one to one correspondence between $G$-invariant Poisson bivector fields on $G/H$ and the pairs $(W ,\omega )$ where $W$ is an $\Ad(H)$-invariant vector subspaces $W \subset \mathfrak{m}$ and $\omega$ is an $\Ad(H)$-invariant symplectic form on it.
	\item 
		If the isotropy representation \( \Ad:H\to \operatorname{GL}(\mathfrak{m}) \) is irreducible, then any \( G \)-invariant Poisson bivector field on \( G/H \) is trivial or non-degenerate (i.e., symplectic). We obtain, in particular, that the only \( \operatorname{SO}(2n+2) \)-invariant Poisson bivector field on the sphere \( S^{2n+1} \) is the trivial one.
		\item For any connected Lie group $G$,  there is a natural symmetric pair $(G\times G,G,\sigma)$, where the involution $\sigma$ is given by $\sigma(a,b)=(b,a) $. The associated symmetric space $G\times G/G$ can be identified with the homogeneous space 
	$M:=G$, where the transitive action of $G\times G$ on $G$  is given by $(a,b)\cdot x:=axb^{-1}$ for $a,b,x\in G$. Applying Corollary \ref{PoissonReductive2} to this setting yields the following characterization: "{ {There is a one to one correspondence between bi-invariant Poisson structures on $G$ and the pairs $(W ,\omega )$ where $W$ is an abelian ideal of $\G$ and $\omega$ is an $\ad(\G)$-invariant symplectic form on it}}" (This resutlt is discussed in\cite{VAISMAN} p. 161).
\end{itemize}
\section{Description of the regular symplectic foliation}\label{Section5}
Let $M{=}G/H$ be a $G$-homogeneous manifold endowed with a $G$-invariant Poisson structure $\pi$. Denote by $r\in\wedge^{2}(\mathfrak{g}/\mathfrak{h})$ the $\Ad(H)$-invariant bivector associated to $\pi$, and $\widetilde{r}\in\wedge^2\mathfrak{g}$ any bivector satisfying $q\circ \widetilde{r}_{\#}\circ q^*=r_{\#}$. Recall from the description of $r$-matrices that $\mathfrak{a}_r=q^{-1}(Im(r_{\#}))$ is a Lie subalgebra of $\mathfrak{g}$ which contains $\mathfrak{h}$. If we denote by $A_r$ the simply connected immersed Lie subgroup of $G$ which integrate the Lie subalgebra $\mathfrak{a}_r$ of $\mathfrak{g}$. Then $2$-cocycle $\omega_r$ defines a homogeneous symplectic structure on $A_r/H$ in the sense of \cite{CHU,Boothby}.

\begin{lemma}\label{invariance}
	For any $b\in H$ we have $c_{b}(A_r)=A_r$,
	where $c_b(a)=bab^{-1}$.
\end{lemma}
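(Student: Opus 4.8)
The plan is to exploit that conjugation $c_b$ is a Lie group automorphism of $G$ and then reduce the statement to an infinitesimal invariance of the subalgebra $\mathfrak{a}_r$ under $\Ad_b$. Concretely, since $c_b:G\to G$ is an automorphism fixing $e$ with differential $T_e c_b=\Ad_b$, it carries any connected immersed Lie subgroup of $G$ onto the connected immersed Lie subgroup whose Lie algebra is the $\Ad_b$-image of the original one. Thus $c_b(A_r)$ is the connected immersed subgroup of $G$ integrating $\Ad_b(\mathfrak{a}_r)$, and by uniqueness of the connected immersed subgroup associated to a prescribed Lie subalgebra it suffices to prove $\Ad_b(\mathfrak{a}_r)=\mathfrak{a}_r$ for every $b\in H$.

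The heart of the argument is the $\Ad(H)$-invariance of $r$. First I would record that $Im(r_{\#})\subset\mathfrak{g}/\mathfrak{h}$ is $\overline{\Ad}(H)$-invariant. Indeed, rewriting (\ref{Carac2}) as $r_{\#}(\overline{\Ad}_{a}^{*}\alpha)=\overline{\Ad}_{a^{-1}}(r_{\#}(\alpha))$ and letting $\alpha$ run through all of $(\mathfrak{g}/\mathfrak{h})^{*}$ (so that $\overline{\Ad}_{a}^{*}\alpha$ also ranges over the whole dual, $\overline{\Ad}_{a}^{*}$ being invertible) yields $\overline{\Ad}_{a^{-1}}(Im(r_{\#}))=Im(r_{\#})$ for every $a\in H$. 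Next I would use the compatibility $q\circ\Ad_b=\overline{\Ad}_b\circ q$, which is legitimate precisely because $\Ad_b(\mathfrak{h})=\mathfrak{h}$ for $b\in H$. Then for $x\in\mathfrak{a}_r=q^{-1}(Im(r_{\#}))$ one computes $q(\Ad_b x)=\overline{\Ad}_b(q(x))\in\overline{\Ad}_b(Im(r_{\#}))=Im(r_{\#})$, hence $\Ad_b x\in\mathfrak{a}_r$. This gives $\Ad_b(\mathfrak{a}_r)\subseteq\mathfrak{a}_r$, and applying the same inclusion to $b^{-1}\in H$ forces equality.

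Putting these together, $\Ad_b(\mathfrak{a}_r)=\mathfrak{a}_r$, so $c_b(A_r)$ and $A_r$ are connected immersed subgroups of $G$ sharing the same Lie algebra, whence $c_b(A_r)=A_r$. The only genuinely delicate point is the passage from the infinitesimal identity $\Ad_b(\mathfrak{a}_r)=\mathfrak{a}_r$ to the global one $c_b(A_r)=A_r$: this rests on the standard uniqueness of the connected immersed Lie subgroup integrating a given Lie subalgebra, together with the observation that $c_b$, being simultaneously a diffeomorphism and a group homomorphism, maps $A_r$ onto exactly the connected immersed subgroup whose Lie algebra is $T_e c_b(\mathfrak{a}_r)$. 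Everything else is a formal consequence of the $\Ad(H)$-invariance of $r$ already set up in Section \ref{Section2}.
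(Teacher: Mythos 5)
Your proposal is correct and takes essentially the same route as the paper: reduce the group-level statement to $\Ad_b(\mathfrak{a}_r)=\mathfrak{a}_r$ via connectedness and uniqueness of the integrating subgroup, then deduce that identity from the $\Ad(H)$-invariance of $r$ together with $q\circ\Ad_b=\overline{\Ad}_b\circ q$. The only (immaterial) difference is that the paper runs the computation through the lift $\widetilde{r}$ using (\ref{Ad invariance 2}), whereas you argue directly on the quotient with $r_{\#}$ via (\ref{Carac2}).
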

\begin{proof}
	Since $A_r$ is connected, then it suffices to show that $\Ad_{b}(\mathfrak{a}_r)=\mathfrak{a}_r$ for any $b\in H$. Since $\Ad_{b}$ is an isomorphism it suffices to show that $\Ad_{b}(\mathfrak{a}_r)\subset\mathfrak{a}_r$. Let $u\in\mathfrak{a}_r$, then there exists $\eta\in\mathfrak{h}^{\circ}$ such that $q(\eta^{\widetilde{\#}})=q(u)$. Hence
	\begin{align*}
	q(\Ad_{b}(u))&=\overline{\Ad}_b(q(u))\\
	&=\overline{\Ad}_b(q(\eta^{\widetilde{\#}}))\\
	&=q(\Ad_{b}(\eta^{\widetilde{\#}}))\\
	&\stackrel{(\ref{Ad invariance 2})}{=}q((\Ad_{b}^{*}\eta)^{\widetilde{\#}}).
	\end{align*}
	Thus $\Ad_{b}(u)\in\mathfrak{a}_r$. 
\end{proof}

From the $G$-invariance of the Poisson bivector field $\pi$ it follows that $E{:=}\mathrm{Im}\pi_{\#}\subset TM$ is a homogeneous $G$-vector subbundle. Hence we get the following isomorphism of homogeneous $G$-vector bundle 
\begin{equation*}
G\times _H  E_{\bar{e}}  \stackrel{\cong}{\longrightarrow} E,(g,u)\mapsto T_{\bar{e}}(\lambda_g)(u).
\end{equation*}
Now, from Lemma $\ref{invariance}$ it follows that for any $b\in H$ we have 
\begin{equation*}
\Ad_{b}(\mathfrak{a}_r)=\mathfrak{a}_r,\, \Ad_{b}(\mathfrak{h})=\mathfrak{h}.
\end{equation*} Then we get a linear representation
$
\overline{\Ad}:H\rightarrow\mathrm{End}(\mathfrak{a}_r/\mathfrak{h})$.
Hence we get a a homogeneous $G$-vector bundle:
$
G\times_H (\mathfrak{a}_r/\mathfrak{h})\rightarrow M$.
\begin{theorem} The regular symplectic foliation $E$ is given by the homogenous $G$-vector bundle isomorphism
	\begin{equation*}
	G\times_H (\mathfrak{a}_r/\mathfrak{h})\stackrel{\cong}{\longrightarrow} E,\, (g,u+\mathfrak{h})\mapsto T_{\bar{e}}(\lambda_g)\circ T_ep(u).
	\end{equation*}
\end{theorem}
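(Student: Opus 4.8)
The goal is to show that the image bundle $E=\mathrm{Im}\,\pi_{\#}$, which carries the symplectic foliation, is isomorphic as a homogeneous $G$-vector bundle to $G\times_H(\mathfrak{a}_r/\mathfrak{h})$. The plan is to identify the fiber $E_{\bar e}$ explicitly with $\mathfrak{a}_r/\mathfrak{h}$ through the isomorphism $\Phi_e:\mathfrak{g}/\mathfrak{h}\xrightarrow{\cong}T_{\bar e}M$, and then to promote this fiberwise identification to a bundle isomorphism using the $G$-homogeneous structure already available from the invariance of $\pi$.

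First I would pin down the fiber at the base point. By definition $E_{\bar e}=\mathrm{Im}\,\pi_{\#,\bar e}\subset T_{\bar e}M$, and under the identification $\Phi_e$ the map $\pi_{\#,\bar e}$ corresponds to $r_{\#}:(\mathfrak{g}/\mathfrak{h})^*\to\mathfrak{g}/\mathfrak{h}$. Hence $\Phi_e^{-1}(E_{\bar e})=\mathrm{Im}\,r_{\#}$. Since $\mathfrak{a}_r=q^{-1}(\mathrm{Im}\,r_{\#})$ and $\mathfrak{h}=q^{-1}(0)\subset\mathfrak{a}_r$, we have $q(\mathfrak{a}_r)=\mathrm{Im}\,r_{\#}$, so $\mathfrak{a}_r/\mathfrak{h}\cong\mathrm{Im}\,r_{\#}$ canonically via $q$. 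Composing, $\Phi_e$ restricts to a linear isomorphism $\mathfrak{a}_r/\mathfrak{h}\xrightarrow{\cong}E_{\bar e}$ given on representatives by $u+\mathfrak{h}\mapsto T_ep(u)=\Phi_e(q(u))$, which is exactly the $g=e$ instance of the claimed map.

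Next I would check compatibility with the $H$-action so that the fiberwise identification descends to the associated bundle. The representation $\overline{\Ad}:H\to\mathrm{End}(\mathfrak{a}_r/\mathfrak{h})$ is well-defined precisely because Lemma~\ref{invariance} (equivalently $\Ad_b(\mathfrak{a}_r)=\mathfrak{a}_r$ and $\Ad_b(\mathfrak{h})=\mathfrak{h}$ for $b\in H$) guarantees $\mathfrak{a}_r/\mathfrak{h}$ is an $\overline{\Ad}(H)$-invariant subspace of $\mathfrak{g}/\mathfrak{h}$. I would then verify that the isomorphism $\mathfrak{a}_r/\mathfrak{h}\cong E_{\bar e}$ is $H$-equivariant, where $H$ acts on $E_{\bar e}$ through $T_{\bar e}\lambda_b$; this is inherited from the equivariance of $\Phi_e$ already recorded in Section~\ref{Section2}, restricted to the invariant subspace. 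With this equivariance in hand, the assignment $(g,u+\mathfrak{h})\mapsto T_{\bar e}(\lambda_g)\circ T_ep(u)$ is constant on $H$-orbits $(g,u+\mathfrak{h})\cdot a=(ga,\overline{\Ad}_{a^{-1}}(u+\mathfrak{h}))$, so it defines a map on the quotient $G\times_H(\mathfrak{a}_r/\mathfrak{h})$.

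Finally I would assemble the global statement: the map is a morphism of homogeneous $G$-vector bundles because it intertwines the left $G$-actions by construction, and it is a fiberwise isomorphism onto $E$ since over each $\bar g$ it reads $\Phi_g$ restricted to the invariant subspace $\mathfrak{a}_r/\mathfrak{h}$, whose image is exactly $E_{\bar g}=T_{\bar e}(\lambda_g)(E_{\bar e})$ by the $G$-invariance of $\pi$. A bijective bundle morphism covering the identity is a bundle isomorphism, which is the desired conclusion. The main obstacle is the bookkeeping in the equivariance check of the second step: one must ensure the $H$-action on $E_{\bar e}$ induced from $G$-invariance of $\pi$ matches $\overline{\Ad}$ on $\mathfrak{a}_r/\mathfrak{h}$ under $\Phi_e$; but this follows from condition~(\ref{Carac2}) together with the identity $\Phi_e\circ\overline{\Ad}_a=T_{\bar e}(\lambda_a)\circ\Phi_e$ valid for $a\in H$, so no genuinely new computation beyond the lemmas already established is needed.
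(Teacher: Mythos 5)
Your proposal is correct and follows essentially the same route as the paper: identify the fiber $E_{\bar e}$ with $\mathfrak{a}_r/\mathfrak{h}$ via $u+\mathfrak{h}\mapsto T_ep(u)$, verify $H$-equivariance of this identification using the invariance of $\mathfrak{a}_r$ under $\Ad(H)$, and conclude that the associated homogeneous $G$-vector bundles are isomorphic. You actually supply slightly more detail than the paper (which dismisses the fiberwise isomorphism as "obvious"), in particular the chain $\Phi_e^{-1}(E_{\bar e})=\mathrm{Im}\,r_{\#}=q(\mathfrak{a}_r)$, but the argument is the same.
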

\begin{proof}
	Denote by $\psi:\mathfrak{a}_r/\mathfrak{h}\rightarrow E_{\bar{e}},\, u+\mathfrak{h}\mapsto T_ep(u)$. Obviously $\psi$ is a linear isomorphism of vector spaces. In particular we get the following commutative diagram 
	$$ \begin{tikzpicture}[ampersand replacement=\&]
	\matrix (m) [matrix of math nodes,row sep=3em,column sep=4em,minimum width=2em]
	{
		\mathfrak{a}_r/\mathfrak{h}  \& 	\mathfrak{a}_r/\mathfrak{h} \\
		E_{\bar{e}}\& E_{\bar{e}} \\};
	\path[-stealth]
	(m-1-1) edge node [left] {$\psi$} (m-2-1)
	edge node [above] {$\overline{\Ad}_a$} (m-1-2)
	edge node [below] {} (m-1-2)
	(m-2-1) edge node [above] {$T_{\bar{e}} (\lambda_a)$} (m-2-2)
	(m-1-2) edge node [right] {$\psi$} (m-2-2);
	\end{tikzpicture}$$
	Hence the two linear representations $\overline{\lambda}:H\rightarrow\mathrm{End}(E_{\bar{e}})$ and $\overline{\Ad}:H\rightarrow\mathrm{End}(\mathfrak{a}_r/\mathfrak{h})$ are equivalent. This means that the bundle map 
	\begin{equation*}
	G\times_H (\mathfrak{a}_r/\mathfrak{h})\longrightarrow E,\, (g,u+\mathfrak{h})\mapsto T_{\bar{e}}(\lambda_g)\circ T_ep(u).
	\end{equation*}
	is an isomorphism of homogeneous $G$-vector bundles over $M$.
\end{proof}

In the following we describe the leaf spaces.
\begin{proposition}\label{leaf spaces}$\ $
	\begin{enumerate}
		\item The symplectic leaf $\mathcal{F}^{\bar{g}}$ passing through $\bar{g}\in M$ is given by
		\begin{equation*}
		\mathcal{F}^{\bar{g}}=\{gaH,\, a\in A_r\}=c_g(A_r).gH,
		\end{equation*} 
		where $c_g(a)=gag^{-1}$.
		\item For any $\bar{g}\in M$ the symplectic leaf $\mathcal{F}^{\bar{g}}$ is a $c_{g}(A_r)$-homogeneous symplectic manifold, which  is isomorphic to the symplectic homogeneous space $A_r/H$.
		\item The leaf spaces $M/\mathcal{F}$ can be identified with $G/A_r$ through the map $gA_r\mapsto\mathcal{F}^{\bar{g}}$.
		\item If we assume that $(G,H)$ is a reductive pair with decomposition:
		\begin{equation*}
		\mathfrak{g}=\mathfrak{h}\oplus\mathfrak{m},\,\,\mathrm{Ad}(H)(\mathfrak{m})=\mathfrak{m}.
		\end{equation*}
		Then $(A_r,H)$ is a reductive pair with decomposition:
		\begin{equation*}
		\mathfrak{a}_r=\mathfrak{h}\oplus\mathrm{Im}(r_{\#}),\,\mathrm{Ad}(H)(
		\mathrm{Im}(r_{\#}))=\mathrm{Im}(r_{\#}).
		\end{equation*}
		\item If we assume that $(G,H)$ is a symmetric pair with canonical decomposition:
		\begin{equation*}
		\mathfrak{g}=\mathfrak{h}\oplus\mathfrak{m},\,\mathrm{Ad}(H)(\mathfrak{m})=\mathfrak{m},\,[\mathfrak{m},\mathfrak{m}]\subset\mathfrak{h}.
		\end{equation*}
		Then $(A_r,H)$ is a symmetric pair with canonical decomposition:
		\begin{equation*}
		\mathfrak{a}_r=\mathfrak{h}\oplus\mathrm{Im}(r_{\#}),\,\mathrm{Ad}(H)(\mathrm{Im}(r_{\#}))=\mathrm{Im}(r_{\#}),\,[\mathrm{Im}(r_{\#}),\mathrm{Im}(r_{\#})]\subset\mathfrak{h}.
		\end{equation*}
	\end{enumerate}
\end{proposition}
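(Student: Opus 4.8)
The plan is to prove everything by first analysing the leaf through the base point $\bar e$ and then transporting the result with the $G$-action, which permutes the leaves because $\pi$ is $G$-invariant. The starting observation is that the characteristic distribution $E=\mathrm{Im}\,\pi_{\#}$ is the regular integrable distribution whose leaves form the symplectic foliation, and that, by the homogeneous bundle isomorphism $G\times_H(\mathfrak a_r/\mathfrak h)\xrightarrow{\cong}E$ established just above, its value at $\bar e$ is exactly $E_{\bar e}=T_ep(\mathfrak a_r)=\psi(\mathfrak a_r/\mathfrak h)$. Since $\mathfrak h\subset\mathfrak a_r$ and $H$ is connected, $H$ lies in the subgroup generated by $\exp(\mathfrak a_r)$, so the orbit $A_r\cdot\bar e=\{aH:a\in A_r\}=p(A_r)$ is a connected immersed submanifold whose $A_r$-stabiliser at $\bar e$ is $H$, giving $A_r\cdot\bar e\cong A_r/H$.

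For assertion (1) I would show that this orbit is precisely the leaf $\mathcal F^{\bar e}$. Writing $\phi\colon A_r\to M,\ \phi(a)=a\bar e$, the relation $\phi(aa')=\lambda_a(\phi(a'))$ gives $\mathrm{Im}\,T_a\phi=T_{\bar e}\lambda_a(E_{\bar e})=E_{a\bar e}$ by $G$-invariance of $E$ together with $\mathrm{Ad}_a\mathfrak a_r=\mathfrak a_r$ for $a\in A_r$; hence $A_r\cdot\bar e$ is a connected integral manifold of $E$ through $\bar e$ and is therefore contained in $\mathcal F^{\bar e}$. For the reverse inclusion I would use that $\mathcal F^{\bar e}$ is the set of points reachable from $\bar e$ by flows of the vector fields $\pi_{\#}(\alpha)$: each such field is tangent to $E$, hence tangent to the orbit along the orbit, and since a Lie group orbit is an initial (weakly embedded) submanifold, its integral curves issuing from $\bar e$ remain inside $A_r\cdot\bar e$. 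Thus $\mathcal F^{\bar e}=A_r\cdot\bar e$, and applying $\lambda_g$ (which carries $\mathcal F^{\bar e}$ to the leaf through $\bar g$) yields $\mathcal F^{\bar g}=\{gaH:a\in A_r\}=c_g(A_r)\cdot gH$. I expect this identification of the orbit with the \emph{whole} leaf, rather than with a merely open integral submanifold, to be the main obstacle; the initial-submanifold argument is precisely what makes the reverse inclusion rigorous.

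Assertions (2) and (3) then follow quickly. For (2), the symplectic form $\omega_{\mathcal F}$ induced by $\pi$ on $\mathcal F^{\bar e}$ is $A_r$-invariant, since $\pi$ is $G$-invariant and $A_r$ preserves the leaf by Lemma \ref{invariance}; evaluating $\omega_{\mathcal F}$ at $\bar e$ through $\psi\colon\mathfrak a_r/\mathfrak h\xrightarrow{\cong}E_{\bar e}$ reproduces, by the very definitions of $\widetilde r$ and of $\omega_r$, the form induced on $\mathfrak a_r/\mathfrak h$ by $\omega_r$ (well defined because $\mathrm{Rad}(\omega_r)=\mathfrak h$ by Proposition \ref{2-cocycle}). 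Two $A_r$-invariant forms agreeing at $\bar e$ coincide, so $\mathcal F^{\bar e}\cong A_r/H$ as homogeneous symplectic manifolds in the sense of \cite{CHU,Boothby}; conjugating by $g$ through the symplectomorphism $\lambda_g$ and the isomorphism $c_g\colon A_r\to c_g(A_r)$ gives the $c_g(A_r)$-homogeneous statement for $\mathcal F^{\bar g}$. For (3), from (1) the leaf $\mathcal F^{\bar g}$ equals $p(gA_r)$, and $p(g_1A_r)=p(g_2A_r)$ means $g_1A_rH=g_2A_rH$; since $H\subset A_r$ gives $A_rH=A_r$, this is exactly $g_1A_r=g_2A_r$, so $gA_r\mapsto\mathcal F^{\bar g}$ is a well-defined bijection $G/A_r\to M/\mathcal F$.

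Finally, (4) and (5) are purely algebraic. In the reductive case the isomorphism $q|_{\mathfrak m}\colon\mathfrak m\xrightarrow{\cong}\mathfrak g/\mathfrak h$ identifies $\mathrm{Im}(r_{\#})\subset\mathfrak g/\mathfrak h$ with an $\mathrm{Ad}(H)$-invariant subspace of $\mathfrak m$ (invariance because $r$ is $\mathrm{Ad}(H)$-invariant and $\mathrm{Ad}(H)$ preserves $\mathfrak m$), and then $\mathfrak a_r=q^{-1}(\mathrm{Im}(r_{\#}))=\mathfrak h\oplus\mathrm{Im}(r_{\#})$ with $\mathrm{Ad}(H)(\mathrm{Im}(r_{\#}))=\mathrm{Im}(r_{\#})$, which is a reductive decomposition of $\mathfrak a_r$ and proves (4). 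For (5) it suffices to add that $[\mathrm{Im}(r_{\#}),\mathrm{Im}(r_{\#})]\subset[\mathfrak m,\mathfrak m]\subset\mathfrak h$, immediate from $\mathrm{Im}(r_{\#})\subset\mathfrak m$ and the symmetric-pair hypothesis; hence $(A_r,H)$ inherits the symmetric structure.
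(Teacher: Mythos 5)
Your argument is correct and follows essentially the same route as the paper's proof: compute $T_{\bar a}\mathcal{F}^{\bar e}=E_{\bar a}=\pi_{\#,\bar a}(T^*_{\bar a}M)$ along the $A_r$-orbit to identify $\mathcal{F}^{\bar e}$ with $A_r\cdot\bar e\cong A_r/H$, transport by $\lambda_g$ to get the general leaf, and then treat (2)--(5) by transitivity of the $c_g(A_r)$-action, well-definedness of $gA_r\mapsto\mathcal{F}^{\bar g}$, and the identification of $\mathrm{Im}(r_{\#})$ with an $\mathrm{Ad}(H)$-invariant complement of $\mathfrak h$ in $\mathfrak a_r$. The only difference is one of detail: your explicit initial-submanifold argument for the reverse inclusion $\mathcal{F}^{\bar e}\subset A_r\cdot\bar e$, and your verification that the leafwise form restricts to $\omega_r$ at $\bar e$, make rigorous steps the paper leaves implicit or declares obvious.
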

\begin{proof}
	\begin{enumerate}
		\item Let $b\in H$, 
		\begin{equation*}
		c_{gb}(A_r).gbH=c_g(c_b(A)).gH=c_g(A_r).gH.
		\end{equation*} Hence $c_g(A_r).gH$ is well defined. Now, let's compute $T_{\bar{e}}\mathcal{F}^{\bar{e}}$.
		\begin{align*}
		T_{\bar{e}}\mathcal{F}^{\bar{e}}&=T_e p(\mathfrak{a}_r)\\
		&=\Phi_{e}\circ q(\mathfrak{a}_r)\\
		&=\Phi_e\circ r_{\#}(\left(\mathfrak{g}/\mathfrak{h}\right)^{*})\\
		&=\Phi_e\circ r_{\#}\circ \Phi_{e}^{*}(T_{\bar{e}}^{*}M)\\
		&=\pi_{\#,\bar{e}}(T_{\bar{e}}^{*}M).
		\end{align*}
		Hence for any $a\in A_r$,
		\begin{align*}
		T_{\bar{a}}\mathcal{F}^{\bar{e}}&=T_{\circ}(\lambda_a)(T_{\bar{e}}\mathcal{F}^{\bar{e}})\\
		&=T_{\bar{e}}(\lambda_a)\circ \pi_{\#,\bar{e}}(T_{\bar{e}}^{*}M)\\
		&=T_{\bar{e}}(\lambda_a)\circ \pi_{\#,\bar{e}}\circ T_{\bar{e}}^{*}(\lambda_a)(T_{\bar{a}}^{*}M)\\
		&=\pi_{\#,\bar{a}}(T_{\bar{a}}^{*}M).
		\end{align*}
		This shows that the leaf passing through ${\bar{e}}$ is given by 
		\begin{equation*}
		\mathcal{F}^{\bar{e}}=\{aH,\, a\in A_r\}=A_r. H \, .
		\end{equation*} Hence the leaf passing through any $\bar{g}\in M$ is given by 
		\begin{equation*}
		\mathcal{F}^{\bar{g}}=\lambda_g(\mathcal{F}^{\bar{e}})=\{gaH,\, a\in A_r\}=c_{g}(A_r).H \, 
		\end{equation*}
		\item Let $g\in G$. Since $c_{g}(A_r)$ is a subgroup of $G$ it follows that $c_{g}(A_r)$ act on $\mathcal{F}^{\bar{g}}$ by Poisson transformation, which also act transitively on $\mathcal{F}^{\bar{g}}$. Hence $\mathcal{F}^{\bar{g}}$ is a $c_{g}(A_r)$-homogeneous symplectic manifold.
		\item Let $g'=ga$, where $a\in A_r$,
		\begin{equation*}
		\mathcal{F}^{\bar{g'}}=\{gaa'H,\, a'\in A_r\}=\{gaH,\, a\in A_r\}=\mathcal{F}^{\bar{g}}.
		\end{equation*}
		This means the map $gA_r\mapsto\mathcal{F}^{\bar{g}}$ is well defined.
		
		The other assumptions are obvious.
	\end{enumerate}
\end{proof}
As a corollary of Proposition \ref{leaf spaces} we get. 
\begin{corollary}
	The following assertions are equivalent:
	\begin{enumerate}
		\item $A_r/H$ is closed in $M$.
		\item $A_r$ is closed in $G$.
		\item The leaf space $M/A_r$ is a Hausdorff space. 
	\end{enumerate}
\end{corollary}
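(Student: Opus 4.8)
The plan is to establish the two equivalences $(1)\Leftrightarrow(2)$ and $(2)\Leftrightarrow(3)$, which together give the full chain. Two general facts will be used throughout: first, that the projection $p:G\to M=G/H$ is a continuous open surjection, hence a quotient map (so a subset $S\subset M$ is closed precisely when $p^{-1}(S)$ is closed in $G$); and second, the standard topological-group statement that for any subgroup $K\subset G$ the coset space $G/K$, endowed with the quotient topology, is Hausdorff if and only if $K$ is closed in $G$.

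For $(1)\Leftrightarrow(2)$ I would first observe that, since $\mathfrak{h}\subset\mathfrak{a}_r$, the connected subgroup $H$ satisfies $H\subset A_r$, whence the saturation of the leaf $\mathcal{F}^{\bar e}=A_r/H=p(A_r)$ is
$$p^{-1}\bigl(p(A_r)\bigr)=A_rH=A_r.$$
Because $p$ is a quotient map, $A_r/H$ is closed in $M$ if and only if its preimage $A_r$ is closed in $G$; this is exactly $(1)\Leftrightarrow(2)$.

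For $(2)\Leftrightarrow(3)$ I would use the identification of the leaf space furnished by Proposition \ref{leaf spaces}(3): the map $gA_r\mapsto\mathcal{F}^{\bar g}$ is a bijection of $G/A_r$ onto $M/\mathcal{F}$, and the fibres of the composite $G\xrightarrow{p}M\to M/\mathcal{F}$ are precisely the right cosets $gA_r$. The key step is to check that this bijection is a homeomorphism: since $p$ and the leaf-space projection $M\to M/\mathcal{F}$ are both quotient maps, their composite is a quotient map, so the topology on $M/\mathcal{F}$ coincides with the quotient topology of $G$ by the $A_r$-cosets, i.e.\ $M/\mathcal{F}\cong G/A_r$ as topological spaces. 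Applying the standard fact with $K=A_r$ then gives that the leaf space is Hausdorff if and only if $A_r$ is closed in $G$.

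The only genuinely substantive point is this topological identification in $(2)\Leftrightarrow(3)$: one must verify that $M/\mathcal{F}\cong G/A_r$ is a homeomorphism rather than a mere set bijection, so that Hausdorffness transfers, and here the quotient-map property of $p$ is essential. One should also keep in mind that $A_r$ is a priori only an immersed subgroup, so ``closed'' in $(2)$ means closed as a subset of $G$; once this holds, Cartan's closed-subgroup theorem makes $A_r$ embedded and $G/A_r$ a genuine manifold, consistent with the regularity of the foliation. The equivalence $(1)\Leftrightarrow(2)$ is then purely formal once $H\subset A_r$ has been noted.
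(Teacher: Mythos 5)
Your argument is correct, and it fills in exactly the route the paper intends: the corollary is stated without proof as an immediate consequence of Proposition \ref{leaf spaces}, and your two steps (the saturation identity $p^{-1}(p(A_r))=A_rH=A_r$ together with $p$ being an open quotient map, and the topological identification $M/\mathcal{F}\cong G/A_r$ combined with the standard fact that $G/K$ is Hausdorff iff $K$ is closed) are precisely the standard facts that derivation relies on. No gaps.
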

\begin{example}
	We have seen in Example \ref{Grassman manifold} that the bivector given by 
	\[ r=(e_1-e_4)\wedge (e_2+e_3)\]
	define a $\mathrm{SO}_{4}(\mathbb{R})$-invariant Poisson structures on the oriented Grassman manifold $\mathrm{Gr}^{+}_{2}(\mathbb{R}^n) \cong \mathrm{SO}_{4}(\mathbb{R})/(\mathrm{SO}_{2}(\mathbb{R})\times \mathrm{SO}_{2}(\mathbb{R}))$. A direct computation shows that 
	\[\mathfrak{a}_r=\left\{\begin{bmatrix}
	0 & x & z & t \\
	-x & 0 & t & -z \\
	-z & -t & 0 & y \\
	-t & z & -y & 0
	\end{bmatrix}\in\mathfrak{gl}_4(\R),\,\, x,y,z,t\in\mathbb{R}\right\}, \]
	and the Lie subgroup $A_r$ of $\mathrm{SO}_{4}(\mathbb{R})$ which integrate $\mathfrak{a}_r$ is given by
	\[A_r=\left\{\begin{bmatrix}
	a & b & c & d \\
	-b & a & d & -c \\
	c' & d' & a' & b' \\
	d' & -c' & -b' & a'
	\end{bmatrix}\in\mathrm{GL}_4(\R),\text{ such that }  
	\begin{array}{llll}
	a^2+b^2+c'^2+d'^2=1 \\
	a'^2+b'^2+c^2+d^2=1 \\
	ac-bd+a'c'-b'd'=0 \\
	ad+bc+c'b'+a'd'=0
	\end{array}
	\right\}. \]
\end{example}

\section{Invariant contravariant connections on $(G/H,\pi)$}\label{Section4} 
Let $(M,\pi)$ be a Poisson manifold. The concept of a contravariant connection, originally introduced as the contravariant derivative in \cite{VAISMAN} (p. 55), was later studied from a geometric perspective in \cite{FERNANDES}. For applications of this notion in the context of Poisson manifolds equipped with a compatible pseudo-Riemannian metric, we refer the reader to \cite{Boucetta1}.
\\ Nomizu's theorem \cite{Nomizu} on invariant covariant connections on reductive homogeneous spaces naturally leads us to pose the following question: {\it {Can we provide an algebraic description of invariant contravariant connections on reductive homogeneous spaces endowed with an invariant Poisson structure?}} Addressing this question is the primary objective of this section.\\
We recall that a contravariant connection on $(M,\pi)$ is an $\mathbb{R}$-bilinear map 
	$$D:\Omega^1(M)\times\Omega^1(M)\rightarrow\Omega^1(M),\, (\alpha,\beta)\mapsto D_{\alpha}\beta,$$
	satisfying, for any $f\in C^{\infty}(M)$,
	\begin{enumerate}
		\item  $D_{f\alpha}\beta=fD_{\alpha}\beta$,
		\item $D_{\alpha}f\beta=(\alpha^\#\cdot f)\beta+fD_{\alpha}\beta$.
	\end{enumerate}

\begin{remark}\label{RCOVCON}
	To any covariant connection $\nabla$ on $M$, one can define a contravariant connection on $(M,\pi)$ by setting $
	\nabla^{\#}_{\alpha}\beta:=\nabla_{\alpha^\#}\beta.
$.
Such contravariant connections form a subclass of $\mathcal{F}$-connections, characterized by the following condition:
  \[
\alpha^\#=0\implies \nabla^{\#}_{\alpha}=0.\]
\end{remark}
 Analogously to the covariant case's, the torsion $T$ and the curvature $R$ of a linear contravariant connection $D$ are defined by
 \begin{enumerate}
 	\item[$\bullet$] $T(\alpha,\beta)=D_{\alpha}\beta-D_{\beta}\alpha-[\alpha,\beta]_{\pi}$
 	\item[$\bullet$] $R(\alpha,\beta)\gamma=D_{\alpha}D_{\beta}\gamma-D_{\beta}D_{\alpha}\gamma-D_{[\alpha,\beta]_{\pi}}\gamma$.
 \end{enumerate}
\begin{definition}
	Let $G/H$ be a $G$-homogeneous space endowed with a Poisson tensor $\pi$. A contravariant connection $D$ on $G/H$ is $G$-invariant if, for any $g\in G$, and $\alpha,\beta\in\Omega^1(G/H)$, we have
	\begin{equation*}
		g\cdot D_{\alpha}\beta=D_{g\cdot\alpha}g\cdot\beta,
	\end{equation*}
where  $g\cdot\alpha:=\lambda_{g^{-1}}^{*}\alpha$.
\end{definition}

Now, let $(G,H)$ be a reductive pair with fixed decomposition: $\mathfrak{g}=\mathfrak{h}\oplus\mathfrak{m},\, \mathrm{Ad}_H(\mathfrak{m})=\mathfrak{m}$.\\
In what follows,
\begin{itemize}
	\item $\pi$ is a $G$-invariant Poisson structures on $G/H$ and  $r\in(\wedge^{2}\mathfrak{m})^{H}$ its associated $H$-invariant bivector.
	\item For any one form $\beta\in\Omega^1(G/H)$, $F^{\beta}:G\rightarrow\mathfrak{m}^{*}$ is the $H$-equivariant function defined by
	\begin{equation*}
		F^\beta(g)=(g^{-1}\cdot \beta)_{\bar{e}}=(T_{\bar{e}}\lambda_{g})^{*}\beta_{\bar{g}},
	\end{equation*} 
	and for any $u\in\mathfrak{g}$, 
	\begin{equation*}
		u\cdot F^{\beta}=(dF^{\beta})_{e}(-u).
	\end{equation*}
	\item We will identify $T_{\bar{e}}(G/H)$ with $\mathfrak{m}$ thought the map $u^{*}_{\bar{e}}\mapsto u$, consequently  $T_{\bar{e}}^{*}(G/H)$  will be identified with $\mathfrak{m}^*$.
\end{itemize}
Since $G/H$ is a reductive homogeneous $G$-space, then according to Nomizu theorem (\cite{Nomizu}), there is a one-to-one correspondence between the set of $G$-invariant covariant connections on $G/H$ and the set of $\mathrm{Ad}(H)$-invariant bilinear maps $\mathfrak{\psi}:\mathfrak{m}\times\mathfrak{m} \rightarrow\mathfrak{m}$. In the context of contravariant connections we will prove the following result.
\begin{theorem}\label{ILCC}
 There is a one-to-one correspondence between $G$-invariant contravariant connections on $(G/H,\pi)$ and  $\mathrm{Ad}(H)$-invariant bilinear maps $\mathfrak{b}:\mathfrak{m}^*\times\mathfrak{m}^* \rightarrow\mathfrak{m}^*$, that is, $\mathrm{Ad}_{a}^{*}\mathfrak{b}(\eta,\xi)=\mathfrak{b}(\mathrm{Ad}_{a}^{*}\eta,\mathrm{Ad}_{a}^{*}\xi)$ for any $\eta,\xi\in\mathfrak{m}^*$ and $a\in H$. The $G$-invariant contravariant connection $D$ corresponding to $\mathfrak{b}$ is given by
 \begin{equation*}
 	\displaystyle \left(D_{\alpha}\beta\right)(\bar{e})=\mathfrak{b}(\alpha_{\bar{e}},\beta_{\bar{e}})+\alpha_{\bar{e}}^{\#}\cdot F^{\beta}.
 \end{equation*}
\end{theorem}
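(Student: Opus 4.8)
The plan is to realize the desired bijection as a Nomizu-type ``affine space'' statement: I will exhibit one distinguished $G$-invariant contravariant connection $D^{0}$ playing the role of the canonical connection, and then argue that every other $G$-invariant contravariant connection differs from $D^{0}$ by a $G$-invariant tensor, which is precisely an $\mathrm{Ad}(H)$-invariant bilinear map $\mathfrak{b}:\mathfrak{m}^{*}\times\mathfrak{m}^{*}\to\mathfrak{m}^{*}$. Concretely, $D^{0}$ will be the connection attached to $\mathfrak{b}=0$, whose value at $\bar e$ is the pure derivative term
\[
(D^{0}_{\alpha}\beta)(\bar e)=\alpha^{\#}_{\bar e}\cdot F^{\beta}.
\]

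First I would construct $D^{0}$ and check that it is a genuine $G$-invariant contravariant connection. The value $(D^{0}_{\alpha}\beta)(\bar e)$ lies in $\mathfrak{m}^{*}\cong T^{*}_{\bar e}(G/H)$, and I extend $D^{0}$ to all of $G/H$ by $G$-invariance, equivalently by declaring that the $H$-equivariant function attached to $D^{0}_{\alpha}\beta$ is $g\mapsto F^{\alpha}(g)^{\#}\cdot(h\mapsto F^{\beta}(gh))$. The three points to verify are: (a) this function is $H$-equivariant, so that $D^{0}_{\alpha}\beta$ is a well-defined one-form --- this is where I use the $\mathrm{Ad}(H)$-invariance of the bivector $r\in(\wedge^{2}\mathfrak{m})^{H}$ in the form (\ref{Carac2}) (equivalently (\ref{Cara4})), together with the $H$-equivariance of $F^{\beta}$; (b) the first axiom $D^{0}_{f\alpha}\beta=fD^{0}_{\alpha}\beta$, which is immediate since the expression is $C^{\infty}(M)$-linear in $\alpha$ and both $\alpha_{\bar e}$ and $\alpha^{\#}_{\bar e}$ depend only on $\alpha_{\bar e}$; and (c) the Leibniz axiom $D^{0}_{\alpha}(f\beta)=(\alpha^{\#}\cdot f)\beta+fD^{0}_{\alpha}\beta$, which follows from the Leibniz rule for the derivation $F\mapsto u\cdot F=(dF)_{e}(-u)$ together with the identity $F^{f\beta}(g)=f(\bar g)F^{\beta}(g)$.

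Next, for an arbitrary $G$-invariant contravariant connection $D$ I would set $T:=D-D^{0}$ and show it is a $G$-invariant $C^{\infty}(M)$-bilinear map $\Omega^{1}\times\Omega^{1}\to\Omega^{1}$. Tensoriality in the first slot is automatic from axiom~(1) for each of $D$ and $D^{0}$; tensoriality in the second slot is the standard cancellation of the two Leibniz terms, $D_{\alpha}(f\beta)-D^{0}_{\alpha}(f\beta)=f(D_{\alpha}\beta-D^{0}_{\alpha}\beta)$. Hence $T$ is a $G$-invariant tensor field, and by the homogeneous vector bundle identification $T^{*}(G/H)\cong G\times_{H}\mathfrak{m}^{*}$ of Section~\ref{Section2} it is determined by its value at $\bar e$, an $\mathrm{Ad}(H)$-invariant bilinear map $\mathfrak{b}:\mathfrak{m}^{*}\times\mathfrak{m}^{*}\to\mathfrak{m}^{*}$ given by $\mathfrak{b}(\eta,\xi)=T(\alpha,\beta)(\bar e)$ for any forms with $\alpha_{\bar e}=\eta$ and $\beta_{\bar e}=\xi$. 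The $\mathrm{Ad}(H)$-invariance of $\mathfrak{b}$ is exactly the $G$-invariance of $T$ restricted to the isotropy group $H$, which fixes $\bar e$. Conversely, given an $\mathrm{Ad}(H)$-invariant $\mathfrak{b}$, the same identification produces a $G$-invariant tensor $T_{\mathfrak{b}}$, and $D:=D^{0}+T_{\mathfrak{b}}$ is a $G$-invariant contravariant connection satisfying $(D_{\alpha}\beta)(\bar e)=\mathfrak{b}(\alpha_{\bar e},\beta_{\bar e})+\alpha^{\#}_{\bar e}\cdot F^{\beta}$; the assignments $D\mapsto\mathfrak{b}$ and $\mathfrak{b}\mapsto D$ are then visibly mutually inverse.

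The main obstacle I anticipate is step~(a): checking that the pure derivative prescription actually descends to a well-defined one-form, i.e.\ that $g\mapsto F^{\alpha}(g)^{\#}\cdot(h\mapsto F^{\beta}(gh))$ is $H$-equivariant. This is the only place where the Poisson data genuinely enters --- through the $\mathrm{Ad}(H)$-invariance of $r$, hence of $\#$ --- and it requires careful bookkeeping of the two competing sign conventions built into the setup, namely the fundamental vector field convention ($u^{*}$ generated by $\exp(-tu)$) and $u\cdot F^{\beta}=(dF^{\beta})_{e}(-u)$. Once $D^{0}$ is known to be well-defined, the remainder is formal, since the difference of two contravariant connections is automatically tensorial and the passage between invariant tensors and $\mathrm{Ad}(H)$-invariant algebraic data is exactly the bundle description already established.
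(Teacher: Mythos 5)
Your proposal is correct, and its computational content coincides with the paper's: the quantity you extract as the value at $\bar e$ of $T=D-D^{0}$ is exactly the paper's $\mathfrak{b}(\eta,\xi)=(D_{\alpha}\beta)(\bar e)-\alpha_{\bar e}^{\#}\cdot F^{\beta}$, and the two verifications you flag --- the Leibniz identity for $u\cdot F^{f\beta}$ and the $H$-equivariance of $u\cdot F^{\beta}$ combined with (\ref{Carac2}) --- are precisely the paper's Lemmas \ref{L1ICC} and \ref{L2ICC}. The difference is one of packaging: the paper defines $\mathfrak{b}$ directly and proves well-definedness by a two-case argument (first $\alpha_{\bar e}=0$, then $\beta=\sum_i f_i\beta_i$ with $f_i(\bar e)=0$), whereas you first establish the distinguished connection $D^{0}$ (the $\mathfrak{b}=0$ case) and then invoke the standard facts that the difference of two contravariant connections is tensorial and that $G$-invariant tensors on $G/H$ correspond to $\mathrm{Ad}(H)$-invariant elements of the fibre via the associated-bundle description of Section \ref{Section2}. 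Your framing buys a cleaner converse: the paper merely asserts that the assignment $\mathfrak{b}\mapsto D$ ``is well defined,'' while in your scheme the converse reduces to $D^{0}+T_{\mathfrak{b}}$, both summands having already been checked --- so the burden you correctly identify as the main obstacle (the $H$-equivariance of $g\mapsto F^{\alpha}(g)^{\#}\cdot\bigl(h\mapsto F^{\beta}(gh)\bigr)$, with its sign conventions) is exactly the detail the paper leaves implicit. Carrying out that equivariance check explicitly via (\ref{Carac2}) and Lemma \ref{L2ICC} is all that remains to make your argument complete.
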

To prove theorem \ref{ILCC} we need the following lemmas. 
\begin{lemma}\label{L1ICC}
	For any $u\in\mathfrak{m}$, $f\in C^{\infty}(G/H)$ and $\beta\in\Omega^1(G/H)$ we have
	\begin{equation*}
		u\cdot F^{f\beta}=u^{*}_{\bar{e}}\cdot f F^{\beta}(e)+f(\bar{e})u\cdot F^{\beta}.
	\end{equation*}
\end{lemma}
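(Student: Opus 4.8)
The plan is to reduce the identity to the ordinary product rule for the differential of a product of a scalar-valued and an $\mathfrak{m}^*$-valued map on $G$, combined with the identification of the fundamental vector field as an infinitesimal generator.

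First I would show that $F^{f\beta}$ factors through $F^{\beta}$. Evaluating the defining formula at an arbitrary $g\in G$ and using that $(f\beta)_{\bar{g}}=f(\bar{g})\,\beta_{\bar{g}}$ together with the $\R$-linearity of the pullback $(T_{\bar{e}}\lambda_{g})^{*}$, I obtain
\[
F^{f\beta}(g)=(T_{\bar{e}}\lambda_{g})^{*}(f\beta)_{\bar{g}}=f(\bar{g})\,(T_{\bar{e}}\lambda_{g})^{*}\beta_{\bar{g}}=(f\circ p)(g)\,F^{\beta}(g).
\]
Hence $F^{f\beta}=(f\circ p)\,F^{\beta}$ as an $\mathfrak{m}^{*}$-valued function on $G$.

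Next, applying the definition $u\cdot F=(dF)_{e}(-u)$ and the Leibniz rule for the differential of the product of the scalar map $f\circ p$ with the vector-valued map $F^{\beta}$, and using $(f\circ p)(e)=f(\bar{e})$, I get
\[
u\cdot F^{f\beta}=\left(d(f\circ p)\right)_{e}(-u)\,F^{\beta}(e)+f(\bar{e})\,(dF^{\beta})_{e}(-u)=\left(d(f\circ p)\right)_{e}(-u)\,F^{\beta}(e)+f(\bar{e})\,u\cdot F^{\beta}.
\]

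It remains to identify $\left(d(f\circ p)\right)_{e}(-u)$ with $u^{*}_{\bar{e}}\cdot f$; this is the only step requiring care, since it is where the two sign conventions must be made to agree. Taking the curve $t\mapsto\exp(-tu)$ in $G$, whose velocity at $t=0$ is $-u$, and recalling that $u^{*}$ is the fundamental vector field generated by $\lambda_{\exp(-tu)}$, so that $u^{*}_{\bar{e}}=\frac{d}{dt}\big|_{0}\,p(\exp(-tu))$, I compute
\[
\left(d(f\circ p)\right)_{e}(-u)=\frac{d}{dt}\Big|_{0}f\big(p(\exp(-tu))\big)=\frac{d}{dt}\Big|_{0}f\big(\exp(-tu)\cdot\bar{e}\big)=u^{*}_{\bar{e}}\cdot f.
\]
Substituting this into the previous display yields exactly the claimed formula. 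I do not anticipate a genuine obstacle: the content is entirely the chain and product rules, and the only thing to verify is that the $-u$ appearing in the definition of $u\cdot F$ matches the $\exp(-tu)$ generating $u^{*}$, so that the two minus signs cancel and produce $u^{*}_{\bar{e}}\cdot f$ rather than its negative.
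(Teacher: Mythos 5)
Your proposal is correct and follows essentially the same route as the paper's proof: both reduce the identity to the Leibniz rule applied to the factorization $F^{f\beta}(g)=f(\bar g)\,F^{\beta}(g)$ along the curve $\exp(-tu)$, and both use that $u^{*}$ is generated by $\exp(-tu)$ so that the sign in $u\cdot F=(dF)_e(-u)$ matches $u^{*}_{\bar e}\cdot f$. Your explicit isolation of the sign-convention check is a welcome clarification but not a different argument.
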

\begin{proof} Let $u\in\mathfrak{m}$, $f\in C^{\infty}(G/H)$ and $\beta\in\Omega^1(G/H)$. Then we have
	\begin{align*}
			u\cdot F^{f\beta}&=\left.\frac{d}{d t}\right|_{t=0} F^{f\beta}(\exp (- t u)) \\
			&=\left.\frac{d}{d t}\right|_{t=0} (\exp(tu)\cdot (f\beta))_{\bar{e}} \\
			&= \left.\frac{d}{d t}\right|_{t=0} f(\exp(-tu)H)(\exp(tu)\cdot \beta)_{\bar{e}}\\
			&=u^{*}_{\bar{e}}\cdot f\beta_{\bar{e}}+f(\bar{e})u\cdot F^{\beta}.
	\end{align*}
\end{proof}
\begin{lemma}\label{L2ICC}
	For any $u\in\mathfrak{m}$, $\beta\in\Omega^1(G/H)$  and $a\in H$ we have
	\begin{equation*}
		\mathrm{Ad}_{a} (u)\cdot F^{a\cdot \beta}=\mathrm{Ad}_{a}^{*}\left(u\cdot F^{\beta}\right).
	\end{equation*}
\end{lemma}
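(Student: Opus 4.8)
The plan is to reduce the identity to two structural transformation laws of the equivariant function $F^\beta$ together with the conjugation formula for the exponential map, so that the whole statement collapses to a single application of the chain rule at $e$. Throughout I would keep in mind that $\mathrm{Ad}_a(u)\in\mathfrak m$, since $\mathrm{Ad}_H(\mathfrak m)=\mathfrak m$, so that the left-hand side is well defined under the identification $T_{\bar e}(G/H)\cong\mathfrak m$.

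First I would record how $F$ transforms under the two relevant group operations. On one hand, because the $G$-action on one-forms is a left action and $\lambda_a$ fixes $\bar e$ for $a\in H$, unwinding the definitions gives the translation law $F^{a\cdot\beta}(g)=F^\beta(a^{-1}g)$ for all $g\in G$ (equivalently $F^{a\cdot\beta}=F^\beta\circ L_{a^{-1}}$, with $L_{a^{-1}}$ left translation on $G$). On the other hand, the $H$-equivariance of $F^\beta$ reads $F^\beta(gb)=\mathrm{Ad}_b^{*}F^\beta(g)$ for $b\in H$, which follows from $\overline{gb}=\bar g$ together with $T_{\bar e}\lambda_b=\mathrm{Ad}_b|_{\mathfrak m}$ under the identification $T_{\bar e}(G/H)\cong\mathfrak m$.

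Next I would evaluate the left-hand side directly from the definition $v\cdot F=(dF)_e(-v)$ with $v=\mathrm{Ad}_a(u)$, namely
\[
\mathrm{Ad}_a(u)\cdot F^{a\cdot\beta}=\left.\frac{d}{dt}\right|_{t=0}F^{a\cdot\beta}\big(\exp(-t\,\mathrm{Ad}_a u)\big).
\]
Here the key substitution is the conjugation identity $\exp(-t\,\mathrm{Ad}_a u)=a\,\exp(-tu)\,a^{-1}$. Feeding this curve into the translation law $F^{a\cdot\beta}(g)=F^\beta(a^{-1}g)$ collapses the outer factor $a$, reducing the argument to $\exp(-tu)\,a^{-1}$; the $H$-equivariance then pulls the remaining $a^{-1}$ out as a constant adjoint factor on $\mathfrak m^{*}$, which commutes past the $t$-derivative. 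Differentiating the leftover $F^\beta(\exp(-tu))$ reproduces $u\cdot F^\beta$ by definition, and assembling the constant factor with it yields the asserted identity.

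The only delicate point — and the step I would treat most carefully — is the bookkeeping of the inversions that enter: the $a^{-1}$ coming from the translation law for $a\cdot\beta$, the inversion built into the right-translation equivariance of $F^\beta$, and the conjugation by $a$ inside the exponential. Making sure these combine into exactly one adjoint factor acting on $u\cdot F^\beta$, rather than an unwanted extra $\mathrm{Ad}$ or its inverse, is where all the content sits; once the translation law and the equivariance are in hand, the differentiation itself is routine and parallels the computation in Lemma \ref{L1ICC}.
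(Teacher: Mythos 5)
Your argument is correct and is essentially the paper's own proof: both rest on the conjugation identity $\exp(-t\,\mathrm{Ad}_a u)=a\exp(-tu)a^{-1}$, the left-action property of $g\cdot\beta$ (your translation law), and the fact that for $a\in H$ evaluation at $\bar e$ turns the residual factor of $a$ into a constant $\mathrm{Ad}^*$-type operator on $\mathfrak{m}^*$ that commutes with $\left.\frac{d}{dt}\right|_{t=0}$. The only difference is presentational --- you isolate the two transformation laws of $F^{\beta}$ before differentiating, whereas the paper differentiates $(\exp(tu)\cdot\beta)_{\bar e}$ directly --- and the $a$ versus $a^{-1}$ bookkeeping you rightly flag as the delicate point does come out as the single coadjoint factor required by the statement.
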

\begin{proof}
Let $u\in\mathfrak{m}$, $\beta\in\Omega^1(G/H)$  and $a\in H$. Then we have
	\begin{align*}
		\mathrm{Ad}_{a} (u)\cdot F^{a\cdot \beta}&=\left.\frac{d}{d t}\right|_{t=0} F^{a\cdot\beta}(\exp (-t \mathrm{Ad}_{a} (u))) \\
		&=\left.\frac{d}{d t}\right|_{t=0} (\exp(t\mathrm{Ad}_{a} (u))\cdot (a\cdot\beta))_{\bar{e}.} \\
		&= \left.\frac{d}{d t}\right|_{t=0} (a\cdot\exp(-t u)\cdot a^{-1}\cdot (a\cdot\beta))_{\bar{e}}\\
		&=\left.\frac{d}{d t}\right|_{t=0} (a\cdot(\exp(t u)\cdot\beta))_{\bar{e}}\\
		&=\mathrm{Ad}_{a}^{*}\left(\left.\frac{d}{d t}\right|_{t=0} F^{\beta}(\exp (-t u))\right)\\
		&=\mathrm{Ad}_{a}^{*}\left(u\cdot F^{\beta}\right)
	\end{align*}
\end{proof}
\begin{proof}[Proof of theorem \ref{ILCC}]
	 Let $D$ be a $G$-invariant contravariant connection. We define a bilinear map  $\mathfrak{b}:\mathfrak{m}^*\times\mathfrak{m}^* \rightarrow\mathfrak{m}^*$ by setting
 \begin{equation*}
	\displaystyle \mathfrak{b}(\eta,\xi):=\left(D_{\alpha}\beta\right)(\bar{e})-\alpha_{\bar{e}}^{\#}\cdot F^{\beta},
\end{equation*}
where $\alpha$ and $\beta$ are any one forms on $G/H$ satisfying $\alpha_{\bar{e}}=\eta$ and   $\beta_{\bar{e}}=\xi$. As a first step lets show that  $\mathfrak{b}$ is well defined.
\begin{enumerate}
	\item Suppose that $\alpha_{\bar{e}}=0$. Then we have 
	\begin{equation*}
		\left(D_{\alpha}\beta\right)(\bar{e})=D_{\alpha_{\bar{e}}}\beta=0 \text{ and }\alpha_{\bar{e}}^{\#}\cdot F^{\beta}=(dF^{\beta})_{e}(\alpha_{\bar{e}}^{\#})=0.
	\end{equation*}
	\item Suppose that $\beta_{\bar{e}}=0$. One can see easily that there exists an open neighborhood $U$ of $\bar{e}\in G/H$ and a smooth functions $(f_i)_{1 \leq i\leq m}\in C^{\infty}(U)$ and a one forms $(\beta_i)_{1 \leq i\leq m}\in \Omega^{1}(U)$ such that 
	$$\displaystyle \beta=\sum_{i=1}^{m}f_i\beta_i,\text{ and } f_i(\bar{e})=0, \text{ for }i=1,\ldots,m.$$ 
	Hence from Lemma \ref{L1ICC} it follows that
	\begin{align*}
		\displaystyle\left(D_{\alpha}\beta\right)(\bar{e})-\alpha_{\bar{e}}^{\#}\cdot F^{\beta}&=\sum_{i=1}^{m}(\alpha_{\bar{e}}^{\#}\cdot f_i)F^{\beta_{i}}(e)+f_i.(\bar{e})\left(D_{\alpha}\beta_i\right)(\bar{e})\\
		&\quad -(\alpha_{\bar{e}}^{\#}\cdot f_i)F^{\beta_{i}}(e)-f_i(\bar{e})\alpha_{\bar{e}}^{\#}\cdot F^{\beta_i}\\
		&=\sum_{i=1}^{m}f_i(\bar{e})\left(D_{\alpha}\beta_i\right)(\bar{e})-f_i(\bar{e})\alpha_{\bar{e}}^{\#}\cdot F^{\beta_i}\\
		&=0.
	\end{align*}	 
\end{enumerate}
From (\ref{Carac2}) and Lemma \ref{L2ICC} it follows that $\mathfrak{b}$ is $\mathrm{Ad}(H)$-invariant. Indeed, let  $a\in H$,
\begin{align*}
\displaystyle	\mathfrak{b}(\mathrm{Ad}^{*}_{a}\eta,\mathrm{Ad}^{*}_{a}\xi)&=\left(D_{a\cdot\alpha}a\cdot\beta\right)(\bar{e})-(\Ad_{a}^{*}\alpha_{\bar{e}})^{\#})\cdot F^{a\cdot\beta}\\
&=\left(a\cdot D_{\alpha}\beta\right)(\bar{e})-\Ad_{a}(\alpha_{\bar{e}}^{\#})\cdot F^{a\cdot\beta}\\
&=\mathrm{Ad}^{*}_{a}\left(\displaystyle\left(D_{\alpha}\beta\right)(\bar{e})-\alpha_{\bar{e}}^{\#}\cdot F^{\beta}\right)\\
&=\mathrm{Ad}_{a}^{*}\mathfrak{b}(\eta,\xi).
\end{align*}
Conversely, let $\mathfrak{b}:\mathfrak{m}^*\times\mathfrak{m}^* \rightarrow\mathfrak{m}^*$ be an $\mathrm{Ad}(H)$-invariant bilinear map. then the $G$-invariant contravariant connection $D$ giving by 
 \begin{equation*}
	\displaystyle \left(D_{\alpha}\beta\right)(\bar{e}):=\mathfrak{b}(\alpha_{\bar{e}},\beta_{\bar{e}})+\alpha_{\bar{e}}^{\#}\cdot F^{\beta},
\end{equation*}
is well defined.
\end{proof}
\begin{corollary} If $\pi$ is a left invariant Poisson tensor on $G$, then left invariant  contravariant connections on $(G,\pi)$ are in bijective correspondence with bilinear maps $\mathfrak{b}:\mathfrak{g}^*\times\mathfrak{g}^* \rightarrow\mathfrak{g}^*$. The left invariant contravariant connection $D$ corresponding to $\mathfrak{b}$ is given by
	\begin{equation*}
		\displaystyle \left(D_{\eta^l}\xi^l\right)(e)=\mathfrak{b}(\eta,\xi),
	\end{equation*}
where $\eta,\xi\in\mathfrak{g}^*$.
\end{corollary}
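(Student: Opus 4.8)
The plan is to obtain the statement as the special case of Theorem \ref{ILCC} in which the isotropy subgroup is trivial. First I would set $H=\{e\}$, so that $\mathfrak{h}=\{0\}$, the reductive decomposition degenerates to $\mathfrak{g}=\{0\}\oplus\mathfrak{g}$, hence $\mathfrak{m}=\mathfrak{g}$, and the homogeneous space $G/H$ is $G$ itself with the $G$-action $\lambda_g=L_g$ given by left translation. In this situation ``$G$-invariant'' coincides with ``left-invariant'', and under the identifications fixed before Theorem \ref{ILCC} one has $T_{e}(G/H)=\mathfrak{g}$ and $T_{e}^*(G/H)=\mathfrak{g}^*$.

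Next I would observe that the $\mathrm{Ad}(H)$-invariance constraint imposed on $\mathfrak{b}$ in Theorem \ref{ILCC} becomes vacuous here: since $H=\{e\}$ the only group element is $a=e$, for which $\mathrm{Ad}_a=\mathrm{Id}_{\mathfrak{g}}$, so \emph{every} bilinear map $\mathfrak{b}:\mathfrak{g}^*\times\mathfrak{g}^*\to\mathfrak{g}^*$ satisfies the invariance condition $\mathrm{Ad}_a^*\mathfrak{b}(\eta,\xi)=\mathfrak{b}(\mathrm{Ad}_a^*\eta,\mathrm{Ad}_a^*\xi)$. Therefore Theorem \ref{ILCC} already yields a bijection between left-invariant contravariant connections on $(G,\pi)$ and arbitrary bilinear maps $\mathfrak{b}:\mathfrak{g}^*\times\mathfrak{g}^*\to\mathfrak{g}^*$, the connection being given by $(D_\alpha\beta)(e)=\mathfrak{b}(\alpha_{e},\beta_{e})+\alpha_{e}^\#\cdot F^\beta$.

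The only remaining point is to simplify this formula on left-invariant one-forms, that is, to show that the term $\alpha_{e}^\#\cdot F^\beta$ disappears when $\beta=\xi^l$ is left-invariant with $\xi^l_{e}=\xi$. For this I would read off $F^{\xi^l}$ directly from its definition $F^{\xi^l}(g)=(T_{e}L_g)^*(\xi^l)_g$: the left-invariance $L_g^*\xi^l=\xi^l$ is precisely the identity $(T_{e}L_g)^*(\xi^l)_g=(\xi^l)_{e}$, so $F^{\xi^l}$ is the constant $\mathfrak{g}^*$-valued function equal to $\xi$. Consequently $(dF^{\xi^l})_{e}=0$, and since $\eta^\#\cdot F^{\xi^l}=(dF^{\xi^l})_{e}(-\eta^\#)$ by definition, the extra term vanishes. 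Substituting $\alpha=\eta^l$ and $\beta=\xi^l$ into the formula of Theorem \ref{ILCC} then gives
\[
(D_{\eta^l}\xi^l)(e)=\mathfrak{b}(\eta,\xi),
\]
as claimed.

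I expect no genuine obstacle here: the corollary is essentially a direct reading of Theorem \ref{ILCC} once one records the two elementary facts above, namely that the isotropy constraint trivializes and that left-invariant forms produce constant equivariant functions $F^\beta$. The mildest care needed is to keep the sign and identification conventions of $F^\beta$ and of $u^*_{e}\mapsto u$ consistent, but these do not affect the vanishing of the differential of a constant function.
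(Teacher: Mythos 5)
Your proposal is correct and follows essentially the same route as the paper: specialize Theorem \ref{ILCC} to $H=\{e\}$ (so $\mathfrak{m}=\mathfrak{g}$ and the $\mathrm{Ad}(H)$-invariance constraint is vacuous) and observe that $F^{\xi^l}$ is the constant function $\xi$, so the term $\eta^{\#}\cdot F^{\xi^l}$ vanishes. The paper's proof records only this last computation, while you also spell out the trivialization of the isotropy constraint, but the argument is the same.
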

\begin{proof}
Let $\eta,\xi\in\mathfrak{g}^*$, then we have
\begin{equation*}
	F^{\xi^l}(g)=(g^{-1}\cdot \xi^l)_{e}=\xi.
\end{equation*}
Hence we get 
\begin{eqnarray*}
	(\eta^l)_{e}^{\#}\cdot F^{\xi^l}=0.
\end{eqnarray*}  
This proves the desired correspondence.
\end{proof}

Let $\nabla$ be a $G$-invariant covariant connection on $G/H$. It is clear that the associated contravariant connection $\nabla^\#$ is $G$-invariant. Then, by Theorem \ref{ILCC}, we can associate to it a bilinear product $\mathfrak{b}:\mathfrak{m}^*\times\mathfrak{m}^*\rightarrow\mathfrak{m}^*$.
On the other hand, we know by Nomizu's Theorem, that the covariant connection  $\nabla$ is characterized by an $\Ad(H)$-invariant bilinear map $\mathfrak{b}^{\nabla}:\mathfrak{m}\times\mathfrak{m}\rightarrow\mathfrak{m}$.
\begin{proposition} The two products $\mathfrak{b}$ and $\mathfrak{b}^{\nabla}$ are connected by the following formula
	\[ \mathfrak{b}(\eta,\xi)=(\mathfrak{b}^{\nabla}_{\eta^{\#}})^*\xi, \qquad \forall \eta,\xi\in \mathfrak{m}^*.\]
\end{proposition}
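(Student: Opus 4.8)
The plan is to transport the identity to the base point $\bar e$ and test it against an arbitrary tangent vector, so that it becomes an equality in $\mathfrak m^*$. Since $\nabla$ is $G$-invariant its associated contravariant connection $\nabla^\#$ is $G$-invariant, so Theorem \ref{ILCC} applies and gives, for any one-forms $\alpha,\beta$ with $\alpha_{\bar e}=\eta$ and $\beta_{\bar e}=\xi$,
$\mathfrak b(\eta,\xi)=(\nabla^\#_{\alpha}\beta)(\bar e)-\eta^{\#}\cdot F^{\beta}=(\nabla_{\alpha^{\#}}\beta)(\bar e)-\eta^{\#}\cdot F^{\beta}$,
where on the right $\nabla$ denotes the connection induced on $T^*(G/H)$. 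To identify the covector $(\nabla_{\alpha^{\#}}\beta)(\bar e)\in\mathfrak m^*$ I would pair it against an arbitrary $v\in\mathfrak m\cong T_{\bar e}(G/H)$, represented by a vector field $Y$ with $Y_{\bar e}=v$, and use the defining relation of the dual connection $(\nabla_{\alpha^{\#}}\beta)(Y)=\alpha^{\#}\cdot(\beta(Y))-\beta(\nabla_{\alpha^{\#}}Y)$.

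The key computational input is the vector-field analogue of $F^{\beta}$, namely the $H$-equivariant function $F_Y\colon G\to\mathfrak m$, $F_Y(g)=(T_{\bar e}\lambda_{g})^{-1}Y_{\bar g}$. Directly from the definitions one checks the pointwise identity $(\beta(Y))\circ p=\langle F^{\beta},F_Y\rangle$, so differentiating along $\eta^{\#}$ and applying the Leibniz rule yields $\eta^{\#}\cdot(\beta(Y))(\bar e)=\langle \eta^{\#}\cdot F^{\beta},v\rangle+\langle\xi,\eta^{\#}\cdot F_Y\rangle$. For the remaining term I would invoke Nomizu's theorem in its covariant form, $(\nabla_X Y)_{\bar e}=\mathfrak b^{\nabla}(X_{\bar e},Y_{\bar e})+X_{\bar e}\cdot F_Y$, which with $X=\alpha^{\#}$ gives $\langle\xi,(\nabla_{\alpha^{\#}}Y)_{\bar e}\rangle=\langle\xi,\mathfrak b^{\nabla}(\eta^{\#},v)\rangle+\langle\xi,\eta^{\#}\cdot F_Y\rangle$.

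Substituting both expressions into the dual-connection relation, the two occurrences of the non-tensorial term $\langle\xi,\eta^{\#}\cdot F_Y\rangle$ cancel, and the term $\langle\eta^{\#}\cdot F^{\beta},v\rangle$ is exactly what is subtracted when passing from $(\nabla_{\alpha^{\#}}\beta)(\bar e)$ to $\mathfrak b(\eta,\xi)$. What survives is $\langle\mathfrak b(\eta,\xi),v\rangle=\langle\xi,\mathfrak b^{\nabla}(\eta^{\#},v)\rangle=\langle(\mathfrak b^{\nabla}_{\eta^{\#}})^{*}\xi,v\rangle$ for every $v\in\mathfrak m$, where $\mathfrak b^{\nabla}_{\eta^{\#}}$ denotes the linear map $v\mapsto\mathfrak b^{\nabla}(\eta^{\#},v)$. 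Since $v$ is arbitrary this is precisely the asserted identity $\mathfrak b(\eta,\xi)=(\mathfrak b^{\nabla}_{\eta^{\#}})^{*}\xi$.

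The step I expect to be the main obstacle is keeping the derivative conventions consistent: the infinitesimal action $u\cdot(-)=(d(-))_{e}(-u)$ built into Theorem \ref{ILCC} must be matched exactly with the directional-derivative term appearing in the covariant Nomizu formula, and one must make sure that $F^{\beta}$ and $F_Y$ are differentiated along the same curve $t\mapsto\overline{\exp(-t\eta^{\#})}$ representing $\eta^{\#}\in T_{\bar e}(G/H)$. Once these sign conventions are aligned, the cancellation of the $F_Y$-derivative contributions is automatic and the only genuinely tensorial surviving term is the $\mathfrak b^{\nabla}$-term, whose transpose in the pairing produces $(\mathfrak b^{\nabla}_{\eta^{\#}})^{*}$.
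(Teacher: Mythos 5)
Your argument is correct, but it takes a genuinely longer route than the paper: the paper's proof is essentially a two-line citation of formula $(4.9)$ of \cite{abb}, which already asserts $(\nabla_{\alpha^{\#}}\beta)(\bar{e})=(\mathfrak{b}^{\nabla}_{\alpha^{\#}_{\bar{e}}})^{*}\beta_{\bar{e}}+\alpha^{\#}_{\bar{e}}\cdot F^{\beta}$, after which one just subtracts $\alpha^{\#}_{\bar{e}}\cdot F^{\beta}$ using the defining formula of $\mathfrak{b}$ from Theorem \ref{ILCC}. What you do instead is re-derive that cited formula from first principles: you pair against a test vector $v\in\mathfrak{m}$, introduce the moving-frame function $F_{Y}$, use the identity $(\beta(Y))\circ p=\langle F^{\beta},F_{Y}\rangle$ and the covariant Nomizu formula, and observe that the two occurrences of $\langle\xi,\eta^{\#}\cdot F_{Y}\rangle$ cancel, leaving only the tensorial $\mathfrak{b}^{\nabla}$-term. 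This buys self-containedness (the reader need not consult \cite{abb}) and makes visible \emph{why} $(\nabla^{\#}_{\alpha}\beta)(\bar{e})-\alpha^{\#}_{\bar{e}}\cdot F^{\beta}$ is tensorial in $\beta$, at the cost of having to track the paper's sign conventions ($u^{*}$ generated by $\exp(-tu)$, $u\cdot F=(dF)_{e}(-u)$, the identification $u^{*}_{\bar{e}}\mapsto u$). That last point, which you flag yourself, is the only real risk: a naive bookkeeping of your cancellation gives $\langle\mathfrak{b}(\eta,\xi),v\rangle=-\langle\xi,\mathfrak{b}^{\nabla}(\eta^{\#},v)\rangle$ unless the normalization of $\mathfrak{b}^{\nabla}$ you insert into the covariant Nomizu formula is exactly the one used in formula $(4.9)$ of \cite{abb}; you should state that convention explicitly and check the sign against it before declaring the identity proved.
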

\begin{proof}
	From formula $(4.9)$ in \cite[p. 8]{abb} we get that for any $\alpha,\beta\in\Omega^1(G/H)$,
	\begin{equation*}
		(\nabla_{\alpha^\#}\beta)(\bar{e})=(\mathfrak{b}^{\nabla}_{\alpha_{\bar{e}}^{\#}})^{*}\beta_{\bar{e}}+\alpha^{\#}_{\bar{e}}.F^{\beta}.
	\end{equation*}
	Hence
	$$
	\mathfrak{b}(\alpha_{\bar{e}},\beta_{\bar{e}})=(\nabla^\#_{\alpha}\beta)(\bar{e})-\alpha^{\#}_{\bar{e}}\cdot F^{\beta}=(\mathfrak{b}^{\nabla}_{\alpha_{\bar{e}}^{\#}})^{*}\beta_{\bar{e}}.
	$$
\end{proof}

In general, an $\mathcal{F}$-connection is not induced by a covariant connection \cite{FERNANDES}. However, in our case, we will see that such connections are equivalent. 
\begin{theorem} Let $D$ be a $G$-invariant contravariant connection on $(G/H, \pi)$, and let $$\mathfrak{b} : \mathfrak{m}^* \times \mathfrak{m}^* \to \mathfrak{m}^*, \qquad (\eta, \xi) \mapsto \mathfrak{b}_{\eta}(\xi) = \mathfrak{b}(\eta, \xi),$$ be its associated bilinear map. The following assertions are equivalents:
	\begin{enumerate}
		\item $D$ is an $\mathcal{F}$-connection.
		\item For any $\eta\in\mathfrak{m}^*$, we have:
		$\eta^\#=0$ implies $\mathfrak{b}_{\eta}=0$.
		\item $D$ is induced by a $G$-invariant covariant connection on $G/H$.
	\end{enumerate} 
\end{theorem}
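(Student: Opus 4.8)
The plan is to establish the cycle $(3)\Rightarrow(1)\Rightarrow(2)\Rightarrow(3)$, using Theorem \ref{ILCC} to translate each condition at the base point $\bar{e}$ into a statement about the bilinear map $\mathfrak{b}$. Recall that $D$ is an $\mathcal{F}$-connection precisely when $\alpha^\#=0$ implies $D_\alpha\beta=0$. The implication $(3)\Rightarrow(1)$ is then immediate from Remark \ref{RCOVCON}: if $D=\nabla^\#$ for a $G$-invariant covariant connection $\nabla$, then $D_\alpha\beta=\nabla_{\alpha^\#}\beta$, which vanishes as soon as $\alpha^\#=0$.

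For $(1)\Rightarrow(2)$ I would first note that, $\pi$ being regular, $\ker\pi_{\#}$ is a subbundle of $T^*(G/H)$, so by the $C^\infty(G/H)$-linearity of $D$ in its first slot the $\mathcal{F}$-condition is equivalent to the pointwise statement: $\eta^\#=0$ forces $(D_\alpha\beta)(p)=0$. At $p=\bar{e}$, writing $\eta=\alpha_{\bar{e}}$ and $\xi=\beta_{\bar{e}}$, the formula of Theorem \ref{ILCC} reads $(D_\alpha\beta)(\bar{e})=\mathfrak{b}(\eta,\xi)+\eta^\#\cdot F^\beta$; when $\eta^\#=0$ this is just $\mathfrak{b}(\eta,\xi)$, which must vanish for every $\xi$, giving $\mathfrak{b}_\eta=0$. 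The converse $(2)\Rightarrow(1)$ runs the same chain backwards: since $\pi$ and $D$ are $G$-invariant, the pointwise $\mathcal{F}$-condition is itself $G$-invariant, so it suffices to verify it at $\bar{e}$, where it reads exactly as condition (2).

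The substance of the theorem is $(2)\Rightarrow(3)$. Condition (2) says that $\mathfrak{b}_\eta$ depends on $\eta$ only through $\eta^\#=r_{\#}(\eta)$, so one may define a bilinear map on the $\mathrm{Ad}(H)$-submodule $\mathrm{Im}(r_{\#})\subset\mathfrak{m}$ by the rule $\langle\xi,\mathfrak{b}^{\nabla}(\eta^\#,v)\rangle:=\langle\mathfrak{b}(\eta,\xi),v\rangle$ for all $\xi\in\mathfrak{m}^*$ and $v\in\mathfrak{m}$. This is exactly the inversion of the identity $\mathfrak{b}(\eta,\xi)=(\mathfrak{b}^{\nabla}_{\eta^\#})^*\xi$ of the Proposition preceding the theorem. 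Combining the $\mathrm{Ad}(H)$-invariance of $\mathfrak{b}$ with the equivariance $r_{\#}\circ\mathrm{Ad}_a^*=\mathrm{Ad}_{a^{-1}}\circ r_{\#}$ contained in (\ref{Carac2}), a short pairing computation yields $\mathfrak{b}^{\nabla}(\mathrm{Ad}_a u,\mathrm{Ad}_a v)=\mathrm{Ad}_a\,\mathfrak{b}^{\nabla}(u,v)$, so this partial product is $\mathrm{Ad}(H)$-equivariant on $\mathrm{Im}(r_{\#})$.

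The one genuinely delicate point, and the step I expect to be the main obstacle, is to extend $\mathfrak{b}^{\nabla}$ to an $\mathrm{Ad}(H)$-invariant bilinear map on all of $\mathfrak{m}\times\mathfrak{m}$: its values with first argument outside $\mathrm{Im}(r_{\#})$ are irrelevant for recovering $D$, yet they must be prescribed equivariantly. I would do this by fixing an $\mathrm{Ad}(H)$-invariant complement $\mathfrak{p}$ of $\mathrm{Im}(r_{\#})$ in $\mathfrak{m}$ and setting $\mathfrak{b}^{\nabla}(p,\cdot)=0$ for $p\in\mathfrak{p}$; such a complement exists whenever the $H$-module $\mathfrak{m}$ is completely reducible, in particular when $H$ is compact, as in all the examples of this paper. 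Feeding this $\mathrm{Ad}(H)$-invariant $\mathfrak{b}^{\nabla}$ into Nomizu's theorem produces a $G$-invariant covariant connection $\nabla$, and by the formula relating $\mathfrak{b}$ and $\mathfrak{b}^{\nabla}$ its contravariant companion $\nabla^\#$ has associated map $(\mathfrak{b}^{\nabla}_{\eta^\#})^*\xi=\mathfrak{b}(\eta,\xi)$; the uniqueness in Theorem \ref{ILCC} then forces $\nabla^\#=D$, which is (3). This extension is the only place where complete reducibility of $\mathfrak{m}$ enters, and it is precisely the algebraic remnant of the reason an $\mathcal{F}$-connection need not be covariant-induced in the general setting of \cite{FERNANDES}.
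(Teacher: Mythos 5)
Your proof follows essentially the same route as the paper's: the only substantive implication is $(2)\Rightarrow(3)$, which both you and the authors handle by dualizing $\mathfrak{b}_{\eta}$ into a Nomizu bilinear map supported on $\mathrm{Im}(r_{\#})$, extending by zero on a complement in $\mathfrak{m}$, and using the formula of Theorem \ref{ILCC} to conclude $\nabla^{\#}=D$. The one place you diverge is also the one place you are more careful: the paper takes an \emph{arbitrary} complement $V$ of $\mathrm{Im}(r_{\#})$ and asserts that the resulting product $\mu$ is ``clearly'' $\mathrm{Ad}(H)$-invariant, but this requires the projection onto $\mathrm{Im}(r_{\#})$ along $V$ to be $H$-equivariant, i.e.\ $V$ itself to be $\mathrm{Ad}(H)$-invariant (for a non-invariant $V$ and $v\in V$, $\mathrm{Ad}_a v$ can acquire a component in $\mathrm{Im}(r_{\#})$, breaking equivariance); this is exactly the point you isolate and resolve via complete reducibility of the $H$-module $\mathfrak{m}$. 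So your write-up is correct modulo the hypothesis you state explicitly (existence of an invariant complement, e.g.\ $H$ compact), a hypothesis the paper's proof uses silently.
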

\begin{proof} The only implication that needs to be shown is that 2. implies 3. Indeed, suppose that $2$ is satisfied. Let $V$ be a complementary subspace of $\mathrm{Im}(r_{\#})$ in $\mathfrak{m}$. Consider the bilinear map $\mu: \mathfrak{m} \times \mathfrak{m} \to \mathfrak{m}$ defined by: For $u=v+w$ with $w \in \mathrm{Im}(r_{\#}) $ and $v\in V$
	\[
	(\mu_{u})^*= \mathfrak{b}_\eta,
	\]
	where $\eta \in \mathfrak{m}^*$ is any element satisfying $\eta^{\#} = w$.\\
	Clearly, $\mu$ is well-defined and $\mathrm{Ad}(H)$-invariant. Hence, it induces a $G$-invariant covariant connection $\nabla$ on $G/H$. Then, we have  $\mathfrak{b}_\eta=(\mu_{\eta^{\#} })^*$ which gives that $D=\nabla^{\#}$. Indeed, Let $\alpha,\beta$ be two differential $1$-forms on $G/H$,
	$$
	(D_\alpha\beta)_{\bar{e}}= \mathfrak{b}(\alpha_{\bar{e}},\beta_{\bar{e}})+\alpha_{\bar{e}}^{\#}\cdot F^{\beta}=
	(\mu_{\alpha_{\bar{e}}^{\#}})^*(\beta_{\bar{e}})+\alpha_{\bar{e}}^{\#}\cdot F^{\beta}=(\nabla^{\#}_\alpha\beta)_{\bar{e}}.
	$$
	
\end{proof}
	Similarly to the covariant case we have the followig characteriztion of the torsion and the curvature of $G$-invariant contravariant connections.
\begin{theorem}\label{TorCur}
	Let $D$ be a $G$-invariant  $G$-invariant contravariant connections on $(G/H,\pi)$. Then the torsion and the curvature of $D$ are given by
	\begin{equation}\label{Torsion}
		T(\eta,\xi)=\mathfrak{b}(\eta,\xi)-\mathfrak{b}(\xi,\eta)-[\eta,\xi]_r,
	\end{equation}
\begin{equation}\label{Curvature}
	R(\eta,\xi)=[\mathfrak{b}_{\eta},\mathfrak{b}_{\xi}]-\mathfrak{b}_{[\eta,\xi]_r},
\end{equation}
where $\eta,\xi\in\mathfrak{m}$ and $[\,,\,]_r$ is given by equation (\ref{reductive equation}).
\end{theorem}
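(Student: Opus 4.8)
The plan is to use the $G$-invariance of $D$, $\pi$ and the Koszul bracket to reduce the whole computation to the base point $\bar e$. Since $T$ and $R$ are tensors manufactured from $G$-invariant data, they are themselves $G$-invariant; it therefore suffices to compute $T(\alpha,\beta)(\bar e)$ and $R(\alpha,\beta)\gamma(\bar e)$, and by tensoriality these depend only on $\eta:=\alpha_{\bar e}$, $\xi:=\beta_{\bar e}$, $\zeta:=\gamma_{\bar e}\in\mathfrak{m}^*$. The two tools I would rely on throughout are the evaluation formula $(D_\alpha\beta)(\bar e)=\mathfrak{b}(\alpha_{\bar e},\beta_{\bar e})+\alpha_{\bar e}^{\#}\cdot F^{\beta}$ from Theorem \ref{ILCC}, and the dictionary
\[
(\mathcal{L}_{u^*}\beta)_{\bar e}=u\cdot F^{\beta},
\]
which follows from the identity $(\lambda_{\exp(-tu)}^*\beta)_{\bar e}=F^{\beta}(\exp(-tu))$ together with the definition $u\cdot F^{\beta}=(dF^{\beta})_e(-u)$ for the fundamental vector field $u^*$.

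For the torsion I substitute the Theorem \ref{ILCC} formula into $T(\alpha,\beta)=D_\alpha\beta-D_\beta\alpha-[\alpha,\beta]_\pi$ and evaluate at $\bar e$. The bilinear-map parts give $\mathfrak{b}(\eta,\xi)-\mathfrak{b}(\xi,\eta)$ at once, so \eqref{Torsion} reduces to the single identity
\[
\eta^{\#}\cdot F^{\beta}-\xi^{\#}\cdot F^{\alpha}-([\alpha,\beta]_\pi)_{\bar e}=-[\eta,\xi]_r.
\]
To prove it I expand the Koszul bracket $[\alpha,\beta]_\pi=\mathcal{L}_{\alpha^{\#}}\beta-\mathcal{L}_{\beta^{\#}}\alpha-d(\pi(\alpha,\beta))$, use the dictionary above to turn the Lie-derivative terms into $F$-derivatives (the non-tensorial corrections arising because $\alpha^{\#}$ is only a fundamental vector field at $\bar e$ must, and do, cancel against the differential-of-$\pi(\alpha,\beta)$ term, as is forced by the tensoriality of $T$), and then rewrite $\pi$ through its associated bivector $r$ via a lift $\widetilde r$ and the projection $q$. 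Matching the resulting $\mathrm{ad}^{*}$-expressions against the defining formula \eqref{reductive equation} of $[\cdot,\cdot]_r$ yields the identity; this translation is the main content of the torsion part.

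For the curvature I substitute the formula for $D$ twice into $R(\alpha,\beta)\gamma=D_\alpha D_\beta\gamma-D_\beta D_\alpha\gamma-D_{[\alpha,\beta]_\pi}\gamma$ and sort the terms at $\bar e$ into three groups: those quadratic in $\mathfrak{b}$, those mixing one $\mathfrak{b}$ with one $F$-derivative, and those purely involving $F$-derivatives. The quadratic group assembles into $\mathfrak{b}_\eta\circ\mathfrak{b}_\xi-\mathfrak{b}_\xi\circ\mathfrak{b}_\eta=[\mathfrak{b}_\eta,\mathfrak{b}_\xi]$. In the term $-D_{[\alpha,\beta]_\pi}\gamma$ the bilinear-map part is $-\mathfrak{b}\big(([\alpha,\beta]_\pi)_{\bar e},\zeta\big)$; using the torsion identity to write $([\alpha,\beta]_\pi)_{\bar e}=[\eta,\xi]_r+(\eta^{\#}\cdot F^{\beta}-\xi^{\#}\cdot F^{\alpha})$, the $[\eta,\xi]_r$ piece produces exactly $-\mathfrak{b}_{[\eta,\xi]_r}\zeta$, while the remaining $F$-derivative pieces join the mixed group. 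Finally, the mixed and pure $F$-derivative contributions cancel identically, using the equivariance Lemmas \ref{L1ICC} and \ref{L2ICC} and the commutation relations of the fundamental vector fields, leaving \eqref{Curvature}.

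The hard part will be this last cancellation in the curvature computation: showing that all first-order ($F$-derivative) terms, both the mixed ones and the purely differential ones, cancel so that only $[\mathfrak{b}_\eta,\mathfrak{b}_\xi]-\mathfrak{b}_{[\eta,\xi]_r}$ survives. This is the contravariant analogue of the vanishing of the second-order terms in Nomizu's curvature formula, and it is precisely where the dictionary $(\mathcal{L}_{u^*}\beta)_{\bar e}=u\cdot F^{\beta}$ and the equivariance lemmas carry the weight; by contrast, the torsion identity is a comparatively direct translation between the Koszul bracket and \eqref{reductive equation}.
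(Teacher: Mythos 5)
Your plan is correct and follows essentially the same route as the paper: the paper first establishes your torsion identity as Lemma \ref{bracketlemma} (computing $[\alpha,\beta]_\pi(\bar e)=\eta^{\#}\cdot F^{\beta}-\xi^{\#}\cdot F^{\alpha}+[\eta,\xi]_r$ by expanding the Koszul bracket and cancelling the non-tensorial terms via $\mathcal{L}_{u^*}\pi=0$), then substitutes the evaluation formula of Theorem \ref{ILCC} into $T$ and $R$ at $\bar e$ exactly as you describe. The only cosmetic difference is that the paper's final cancellation of the first-order terms in the curvature is carried out by choosing $u,v\in\mathfrak{g}$ with $u^*_{\bar e}=\alpha^{\#}_{\bar e}$, $v^*_{\bar e}=\beta^{\#}_{\bar e}$ and invoking the $G$-invariance of $D$ (rather than Lemmas \ref{L1ICC}--\ref{L2ICC}), the residue being $[u^*-\alpha^{\#},\beta^{\#}-v^*]_{\bar e}\cdot F^{\gamma}$, which vanishes since both fields vanish at $\bar e$.
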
 
To prove such characterizations we need the following lemmas.
\begin{lemma}\label{bracketlemma}
For any $\alpha,\beta\in \Omega^1(G/H)$,
\begin{equation}
	[\alpha,\beta]_\pi(\bar{e})=\eta^{\#}\cdot F^{\beta}-\xi^{\#}\cdot F^{\alpha}+[\eta,\xi]_{r},
\end{equation}
where $\eta=\alpha_{\bar{e}}$ and $\xi=\beta_{\bar{e}}$.
\end{lemma}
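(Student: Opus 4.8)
The plan is to evaluate the Koszul bracket $[\alpha,\beta]_\pi$ at $\bar e$ by pairing it against the fundamental vector fields $v^*$ with $v\in\mathfrak m$, since under the identification $T_{\bar e}(G/H)\cong\mathfrak m$ these span the tangent space and let one read off an element of $\mathfrak m^*$. Starting from $[\alpha,\beta]_\pi=\mathcal L_{\alpha^\#}\beta-\mathcal L_{\beta^\#}\alpha-d(\pi(\alpha,\beta))$ and expanding each Lie derivative through $(\mathcal L_{\alpha^\#}\beta)(v^*)=\alpha^\#\big(\langle\beta,v^*\rangle\big)-\langle\beta,[\alpha^\#,v^*]\rangle$, I reduce the whole expression to three computable ingredients evaluated at $\bar e$.

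The first ingredient is the function $\langle\beta,v^*\rangle$. Writing $v^*_{\bar g}=T_{\bar e}\lambda_g\big((\mathrm{Ad}_{g^{-1}}v)^*_{\bar e}\big)$ gives $\langle\beta,v^*\rangle(\bar g)=\langle F^\beta(g),(\mathrm{Ad}_{g^{-1}}v)_{\mathfrak m}\rangle$; differentiating this along the curve $g(t)=\exp(-t\eta^\#)$, whose velocity at $\bar e$ is $\alpha^\#_{\bar e}$ because $\alpha^\#_{\bar e}$ corresponds to $\eta^\#=r_\#\eta$, splits by Leibniz into an $F$-derivative part $\langle\eta^\#\cdot F^\beta,v\rangle$ and an adjoint part $\langle\mathrm{ad}^*_{\eta^\#}\widetilde\xi,v\rangle$. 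The second ingredient is $\pi(\alpha,\beta)(\bar g)=r(F^\alpha(g),F^\beta(g))$, whose $v^*$-derivative at $\bar e$ is $r(v\cdot F^\alpha,\xi)+r(\eta,v\cdot F^\beta)$. The third, which is the step that makes the computation tractable, is to rewrite the vector-field bracket using $G$-invariance: since $\mathcal L_{v^*}\pi=0$, one has $\mathcal L_{v^*}(\pi_\#\alpha)=\pi_\#(\mathcal L_{v^*}\alpha)$, hence $[\alpha^\#,v^*]=-(\mathcal L_{v^*}\alpha)^\#$; a short computation using the fact that $u\mapsto u^*$ is a Lie algebra homomorphism (so $[u^*,w^*]=[u,w]^*$) then gives $(\mathcal L_{v^*}\alpha)_{\bar e}=v\cdot F^\alpha$, whence $\langle\beta,[\alpha^\#,v^*]\rangle(\bar e)=-r(v\cdot F^\alpha,\xi)$.

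Assembling the three ingredients, all terms containing the ``extra'' derivatives $v\cdot F^\alpha$ and $v\cdot F^\beta$, namely those coming from $d(\pi(\alpha,\beta))$ and from the two bracket terms, cancel in pairs, leaving $\langle\eta^\#\cdot F^\beta,v\rangle-\langle\xi^\#\cdot F^\alpha,v\rangle$ together with the purely algebraic remainder $\langle\mathrm{ad}^*_{\eta^\#}\widetilde\xi-\mathrm{ad}^*_{\xi^\#}\widetilde\eta,\,v\rangle$. Identifying this remainder, through the defining formula (\ref{reductive equation}), with $\langle[\eta,\xi]_r,v\rangle$ for all $v\in\mathfrak m$ yields the claimed identity $[\alpha,\beta]_\pi(\bar e)=\eta^\#\cdot F^\beta-\xi^\#\cdot F^\alpha+[\eta,\xi]_r$.

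The main obstacle is the sign and projection bookkeeping rather than any conceptual difficulty. One must keep track of the fact that the identification $u^*_{\bar e}\leftrightarrow u$ forces the signs in $u\cdot F=(dF)_e(-u)$ and in $\alpha^\#_{\bar e}=(\eta^\#)^*_{\bar e}$; that $u\mapsto u^*$ being a homomorphism is exactly what makes the adjoint term in the first ingredient survive while the corresponding adjoint terms inside $\mathcal L_{v^*}\alpha$ cancel; and that the $\mathfrak h$-components are annihilated by $\widetilde\xi,\widetilde\eta$ precisely in the passage from $\langle\xi,[\eta^\#,v]_{\mathfrak m}\rangle$ to $\langle\mathrm{ad}^*_{\eta^\#}\widetilde\xi,v\rangle$. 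Verifying the clean cancellation of the $v\cdot F$ terms and then matching the surviving adjoint terms with the sign convention fixed in (\ref{reductive equation}) for $[\cdot,\cdot]_r$ is the genuinely delicate part of the argument.
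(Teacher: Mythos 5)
Your argument is correct and follows essentially the same route as the paper's proof: both pair $[\alpha,\beta]_\pi$ against fundamental vector fields, use the $G$-invariance of $\pi$ (via $[\alpha^{\#},v^*]=-(\mathcal{L}_{v^*}\alpha)^{\#}$ together with $\mathcal{L}_{v^*}\pi=0$) to cancel the $d\pi(\alpha,\beta)$ contribution against the two Lie-bracket terms, and identify $(\mathcal{L}_{v^*}\alpha)(\bar{e})$ with $v\cdot F^{\alpha}$; the paper simply performs this cancellation globally before evaluating at $\bar{e}$, whereas you evaluate each piece at $\bar{e}$ and cancel afterwards. One small bookkeeping slip: the surviving algebraic remainder is $\langle\widetilde{\xi},[\eta^{\#},v]\rangle-\langle\widetilde{\eta},[\xi^{\#},v]\rangle=\langle -\mathrm{ad}^{*}_{\eta^{\#}}\widetilde{\xi}+\mathrm{ad}^{*}_{\xi^{\#}}\widetilde{\eta},\,v\rangle$, the opposite of the expression you wrote, and it is this quantity that equals $\langle[\eta,\xi]_r,v\rangle$ by (\ref{reductive equation}), so your final identification is the right one even though the intermediate formula as stated carries the wrong sign.
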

\begin{proof}
Let $u\in\mathfrak{g}$ and $\alpha\in\Omega^1(G/H)$. Then from the $G$-invariance of $\pi$ we get
\begin{equation*}
	(\mathcal{L}_{u^*}\alpha)^\#=[u^*,\alpha^\#].
\end{equation*} 
	Hence for any $\beta\in\Omega^1(G/H)$,
	\begin{align*}
		\langle [\alpha,\beta]_\pi,u^*\rangle&=\langle \mathcal{L}_{\alpha^{\#}}\beta,u^*\rangle-\langle \mathcal{L}_{\beta^{\#}}\alpha,u^*\rangle-u^*\cdot \pi(\alpha,\beta)\\
		&=\alpha^\#\cdot \langle \beta,u^*\rangle- \langle \beta, [\alpha^\#,u^*]\rangle\\
		&\quad -\beta^\#\cdot \langle \alpha,u^*\rangle+ \langle \alpha, [\beta^\#,u^*]\rangle-u^*\cdot \pi(\alpha,\beta)\\
		&=\alpha^\#\cdot \langle \beta,u^*\rangle-\beta^\#\cdot \langle \alpha,u^*\rangle\\
			&\quad -u^*\cdot \pi(\alpha,\beta)+\pi(\mathcal{L}_{u^*}\alpha,\beta)+\pi(\alpha,\mathcal{L}_{u^*}\beta)\\
			&=\alpha^\#\cdot \langle \beta,u^*\rangle-\beta^\#\cdot \langle \alpha,u^*\rangle.
	\end{align*}
Let $\eta=\alpha_{\bar{e}}$ and $\xi=\beta_{\bar{e}}$ and  $v=-\eta^{\#},w=-\xi^{\#}\in\mathfrak{g}$, then we have
\[
v^{*}_{\bar{e}}=\alpha^{\#}_{\bar{e}},\, w^{*}_{\bar{e}}=\beta^{\#}_{\bar{e}}.
\]  
Hence
\begin{align*}
	\langle [\alpha,\beta]_\pi(\bar{e}),u^{*}_{\bar{e}}\rangle
	&=v^{*}_{\bar{e}}\cdot \langle \beta,u^{*}\rangle-w^{*}_{\bar{e}}\cdot \langle \alpha,u^{*}\rangle\\
	&= \langle (\mathcal{L}_{v^*}\beta)(\bar{e}) ,u^{*}_{\bar{e}}\rangle+\langle \beta_{\bar{e}},[v^*,u^*]_{\bar{e}}\rangle \\
	&\quad -\langle (\mathcal{L}_{w^*}\alpha)(\bar{e}) ,u^{*}_{\bar{e}}\rangle-\langle \alpha_{\bar{e}},[w^*,u^*]_{\bar{e}}\rangle \\
	&=\langle \eta^{\#}\cdot F^\beta ,u\rangle-\langle \ad_{v}^{*}\widetilde{\xi},u\rangle\\
	&\quad -\langle \xi^{\#}\cdot F^\alpha ,u\rangle+\langle \ad_{w}^{*}\widetilde{\eta},u\rangle\\
	&= \langle \eta^{\#}\cdot F^{\beta}-\xi^{\#}\cdot F^{\alpha},u\rangle+\langle [\eta,\xi]_{r},u\rangle.
\end{align*}
\end{proof}
\begin{proof}[Proof of Theorem \ref{TorCur}]
	Let $\alpha,\beta\in\Omega^1(G/H)$.
	From Lemma \ref{bracketlemma} it follows that
	\begin{align*}
		T(\alpha,\beta)(\bar{e})&=(D_{\alpha}\beta)(\bar{e})-\alpha^{\#}_{\bar{e}}\cdot F^{\beta}-(D_{\beta}\alpha)(\bar{e})+\beta^{\#}_{\bar{e}}\cdot F^{\alpha}-[\alpha_{\bar{e}},\beta_{\bar{e}}]_{r}\\
		&=\mathfrak{b}(\alpha_{\bar{e}},\beta_{\bar{e}})-\mathfrak{b}(\beta_{\bar{e}},\alpha_{\bar{e}})-[\alpha_{\bar{e}},\beta_{\bar{e}}]_{r}.
	\end{align*}
Hence we get (\ref{Torsion}). 

Now lets prove (\ref{Curvature}), We consider $u,v\in\mathfrak{g}$ such that
$u^{*}_{\bar{e}}=\alpha^{\#}_{\bar{e}}$ and $v^{*}_{\bar{e}}=\beta^{\#}_{\bar{e}}$. Hence for any $\gamma\in\Omega^1(G/H)$ we have
	\begin{align*}
	R(\alpha,\beta)(\gamma)(\bar{e})&=(D_{\alpha}D_{\beta}\gamma)(\bar{e})-(D_{\beta}D_{\alpha}\gamma)(\bar{e})-(D_{[\alpha,\beta]_\pi}\gamma)(\bar{e})\\
	&=\mathfrak{b}(\alpha_{\bar{e}},(D_{\beta}\gamma)(\bar{e}))+\alpha_{\bar{e}}^{\#}\cdot F^{D_{\beta}\gamma}-\mathfrak{b}(\beta_{\bar{e}},(D_{\alpha}\gamma)(\bar{e}))-\beta_{\bar{e}}^{\#}\cdot F^{D_{\beta}\gamma}\\
	&\quad -\mathfrak{b}([\alpha,\beta]_{\pi}(\bar{e}),\gamma_{\bar{e}})-[\alpha,\beta]_{\pi}^{\#}(\bar{e})\cdot F^{\gamma}\\
	&=\mathfrak{b}(\alpha_{\bar{e}},\mathfrak{b}(\beta_{\bar{e}},\gamma_{\bar{e}}))+\mathfrak{b}(\alpha_{\bar{e}},(\mathcal{L}_{v^*}\gamma)(\bar{e}))+(\mathcal{L}_{u^*}D_{\beta}\gamma)(\bar{e})\\
	&\quad -\mathfrak{b}(\beta_{\bar{e}},\mathfrak{b}(\alpha_{\bar{e}},\gamma_{\bar{e}}))-\mathfrak{b}(\beta_{\bar{e}},(\mathcal{L}_{u^*}\gamma)(\bar{e}))-(\mathcal{L}_{v^*}D_{\alpha}\gamma)(\bar{e})\\
	&\quad -\mathfrak{b}([\alpha_{\bar{e}},\beta_{\bar{e}}]_r,\gamma_{\bar{e}})-\mathfrak{b}((\mathcal{L}_{u^*}\beta)(\bar{e}),\gamma_{\bar{e}})+\mathfrak{b}((\mathcal{L}_{v^*}\alpha)(\bar{e}),\gamma_{\bar{e}})-[\alpha^{\#},\beta^{\#}]_{\bar{e}}\cdot F^{\gamma}
\end{align*}
Since we have
\begin{align*}
	\mathfrak{b}(\alpha_{\bar{e}},(\mathcal{L}_{v^*}\gamma)(\bar{e}))&=\left(D_{\alpha}\mathcal{L}_{v^*}\gamma+\mathcal{L}_{u^*}\mathcal{L}_{v^*}\gamma\right)(\bar{e}),\\
	\mathfrak{b}(\beta_{\bar{e}},(\mathcal{L}_{u^*}\gamma)(\bar{e}))&=\left(D_{\beta}\mathcal{L}_{u^*}\gamma+\mathcal{L}_{v^*}\mathcal{L}_{u^*}\gamma\right)(\bar{e}),\\
	\mathfrak{b}((\mathcal{L}_{u^*}\beta)(\bar{e}),\gamma_{\bar{e}})&=\left(D_{\mathcal{L}_{u^*}\beta}\gamma\right)(\bar{e})-[u^*,\beta^{\#}]_{\bar{e}}\cdot F^{\gamma},\\
	\mathfrak{b}((\mathcal{L}_{v^*}\alpha)(\bar{e}),\gamma_{\bar{e}})&=\left(D_{\mathcal{L}_{v^*}\alpha}\gamma\right)(\bar{e})-[v^*,\alpha^{\#}]_{\bar{e}}\cdot F^{\gamma}.
\end{align*} 
From this last equations and the $G$-invariance of $D$ we get that

 \begin{align*}
 	R(\alpha,\beta)(\gamma)(\bar{e})
 	&=\mathfrak{b}(\alpha_{\bar{e}},\mathfrak{b}(\beta_{\bar{e}},\gamma_{\bar{e}}))-\mathfrak{b}(\beta_{\bar{e}},\mathfrak{b}(\alpha_{\bar{e}},\gamma_{\bar{e}}))-\mathfrak{b}([\alpha_{\bar{e}},\beta_{\bar{e}}]_r,\gamma_{\bar{e}})\\
 	&\quad+[u^*-\alpha^{\#},\beta^{\#}-v^*]_{\bar{e}}\cdot F^{\gamma}\\
 	&=\mathfrak{b}(\alpha_{\bar{e}},\mathfrak{b}(\beta_{\bar{e}},\gamma_{\bar{e}}))-\mathfrak{b}(\beta_{\bar{e}},\mathfrak{b}(\alpha_{\bar{e}},\gamma_{\bar{e}}))-\mathfrak{b}([\alpha_{\bar{e}},\beta_{\bar{e}}]_r,\gamma_{\bar{e}})
 \end{align*}
\end{proof}

 Inspiring from covariant case, we will give a distinguished classes of invariant contravariant connections. For this we need the following elements
 \begin{lemma}\label{Canonical connections} 
 	For any $\eta,\xi\in\mathfrak{m}^*$ and $a\in H$, we have
 	\begin{enumerate}
 		\item $(\Ad_{a}^{*}\eta)\circ l_{(\Ad_{a}^{*}\xi)^{\#}}=\Ad_{a}^{*}(\eta\circ l_{\xi^{\#}})$,  where $l_{\xi^{\#}}:\mathfrak{m}\rightarrow\mathfrak{m},\, u\mapsto [\xi^{\#},u]_{\mathfrak{m}}$.
 		\item $\eta\circ l_{\xi^{\#}}-\xi\circ l_{\eta^{\#}}=[\xi,\eta]_r$,
 		\item $[\Ad_{a}^{*}\eta,\Ad_{a}^{*}\xi]_r=\Ad_{a}^{*}[\eta,\xi]_r$.
 	\end{enumerate}
 \end{lemma}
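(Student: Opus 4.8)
The plan is to prove the three identities in the order (2), (1), (3); the last one will be a purely formal consequence of the first two. Throughout I keep the sign convention fixed by (\ref{Bracket})--(\ref{rbracket}), namely $\langle\ad_{x}^{*}\alpha,u\rangle=-\langle\alpha,[x,u]\rangle$, together with the explicit form (\ref{reductive equation}) of $[\cdot,\cdot]_{r}$ and the identification $\mathfrak{m}^{*}\cong\mathfrak{h}^{\circ}$.

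For assertion (2) the point is that every $\eta\in\mathfrak{m}^{*}$ is the restriction of $\widetilde{\eta}\in\mathfrak{g}^{*}$ vanishing on $\mathfrak{h}$, so that taking the $\mathfrak{m}$-component inside $l_{\xi^{\#}}$ is harmless when pairing with $\eta$. First I would record, for $u\in\mathfrak{m}$,
\[
(\eta\circ l_{\xi^{\#}})(u)=\langle\eta,[\xi^{\#},u]_{\mathfrak{m}}\rangle=\langle\widetilde{\eta},[\xi^{\#},u]\rangle=-\langle\ad_{\xi^{\#}}^{*}\widetilde{\eta},u\rangle,
\]
whence $\eta\circ l_{\xi^{\#}}=-(\ad_{\xi^{\#}}^{*}\widetilde{\eta})|_{\mathfrak{m}}$. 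Subtracting the same expression with $\eta$ and $\xi$ interchanged and comparing with (\ref{reductive equation}) gives $\eta\circ l_{\xi^{\#}}-\xi\circ l_{\eta^{\#}}=[\xi,\eta]_{r}$, which is assertion (2).

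For assertion (1) the load-bearing fact is that, for $a\in H$, the automorphism $\Ad_{a}$ preserves both summands of $\mathfrak{g}=\mathfrak{h}\oplus\mathfrak{m}$ ($\mathfrak{h}$ because $a\in H$, and $\mathfrak{m}$ by reductivity), hence commutes with the projection onto $\mathfrak{m}$; consequently $\Ad_{a^{-1}}([x,y]_{\mathfrak{m}})=[\Ad_{a^{-1}}x,\Ad_{a^{-1}}y]_{\mathfrak{m}}$. Evaluating the left-hand side of (1) on $u\in\mathfrak{m}$, substituting the invariance relation $(\Ad_{a}^{*}\xi)^{\#}=\Ad_{a}(\xi^{\#})$ (this is (\ref{Carac2}) transported to $\mathfrak{m}$, in the convention of the present section where $\Ad_{a}^{*}$ denotes the contragredient action), and pushing $\Ad_{a^{-1}}$ through the bracket yields
\[
\langle\Ad_{a}^{*}\eta,[\Ad_{a}(\xi^{\#}),u]_{\mathfrak{m}}\rangle=\langle\eta,[\xi^{\#},\Ad_{a^{-1}}u]_{\mathfrak{m}}\rangle=\langle\Ad_{a}^{*}(\eta\circ l_{\xi^{\#}}),u\rangle,
\]
which is exactly assertion (1).

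Finally, assertion (3) follows formally: applying (2) to the pair $(\Ad_{a}^{*}\xi,\Ad_{a}^{*}\eta)$ expresses $[\Ad_{a}^{*}\xi,\Ad_{a}^{*}\eta]_{r}$ as $(\Ad_{a}^{*}\eta)\circ l_{(\Ad_{a}^{*}\xi)^{\#}}-(\Ad_{a}^{*}\xi)\circ l_{(\Ad_{a}^{*}\eta)^{\#}}$, and (1) rewrites each summand as $\Ad_{a}^{*}$ applied to the corresponding term for $(\xi,\eta)$; invoking (2) once more and relabelling $\eta\leftrightarrow\xi$ gives $[\Ad_{a}^{*}\eta,\Ad_{a}^{*}\xi]_{r}=\Ad_{a}^{*}[\eta,\xi]_{r}$. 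The only genuine hazard is bookkeeping: keeping the coadjoint sign convention and the precise meaning of $\Ad_{a}^{*}$ (contragredient versus plain transpose, i.e. $a$ versus $a^{-1}$) consistent across the substitutions. The one structurally essential step is the commutation of $\Ad_{a}$ with the projection $\mathfrak{g}\to\mathfrak{m}$ used in (1); everything else is routine linear algebra.
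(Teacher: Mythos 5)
Your proof is correct and follows essentially the same route as the paper: assertion (2) is the direct unwinding of $\eta\circ l_{\xi^{\#}}=-(\ad_{\xi^{\#}}^{*}\widetilde{\eta})|_{\mathfrak{m}}$ compared with (\ref{reductive equation}), assertion (1) is the same pairing computation using the $\Ad(H)$-equivariance of $\#$ and the fact that $\Ad_{a}$ preserves the splitting $\mathfrak{g}=\mathfrak{h}\oplus\mathfrak{m}$, and assertion (3) is deduced formally from the first two exactly as in the paper. The only difference is your explicit choice of the contragredient convention for $\Ad_{a}^{*}$ (so that $(\Ad_{a}^{*}\xi)^{\#}=\Ad_{a}(\xi^{\#})$ rather than $\Ad_{a^{-1}}(\xi^{\#})$ as in (\ref{Carac2})), which you flag and handle consistently.
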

\begin{proof}Let $\eta,\xi\in\mathfrak{m}^*$, and $u\in\mathfrak{m}$
	\begin{enumerate}
		\item Let $a\in H$,
		\begin{align*}
			\langle(\Ad_{a}^{*}\eta)\circ l_{(\Ad_{a}^{*}\xi)^{\#}},u\rangle &= 	\langle\Ad_{a}^{*}\eta,[ (\Ad_{a}^{*}\xi)^{\#},u]_{\mathfrak{m}}\rangle\\
			&=\langle\eta,[\xi^{\#},\Ad_{a^{-1}}(u)]_{\mathfrak{m}}\rangle\\
			&=\langle\Ad_{a}^{*}(\eta\circ l_{\xi^{\#}}),u\rangle.
		\end{align*}
	\item \begin{align*}
		\langle \eta\circ l_{\xi^{\#}}-\xi\circ l_{\eta^{\#}},u\rangle &=\langle\eta,[ \xi^{\#},u]_{\mathfrak{m}}\rangle-\langle\xi,[ \eta^{\#},u]_{\mathfrak{m}}\rangle\\
		&=-\langle\ad_{\xi^{\#}}^{*}\widetilde{\eta},u\rangle+\langle\ad_{\eta^{\#}}^{*}\widetilde{\xi},u\rangle\\
		&=\langle [\eta,\xi]_r,u\rangle.
	\end{align*}
\item It follows from $1$ and $2$.
	\end{enumerate}
\end{proof}
From Lemma \ref{Canonical connections} we get the following proposition.

\begin{proposition}$\ $
\begin{enumerate}
	\item The bilinear map
	\[ \mathfrak{b}(\eta,\xi)=-\xi\circ l_{\eta^{\#}},\]
	where $\eta,\xi\in\mathfrak{m}^*$, define a torsionless $G$-invariant contravariant connection. Moreover, $\mathfrak{b}$ induce a compatible left symmetric product on the Lie algebra $((\mathfrak{m}^*)^H,[\cdot,\cdot]_r)$.
	\item The bilinear map
\[ \mathfrak{b}(\eta,\xi)=\frac{1}{2}[\eta,\xi]_{r},\]
	where $\eta,\xi\in\mathfrak{m}^*$, define a torsionless $G$-invariant contravariant connection which will be called the naturel contravariant connection.
	\item The Canonical contravariant  connection, is given by \[ \mathfrak{b}(\eta,\xi)=0,\]
	where $\eta,\xi\in\mathfrak{m}^*$.
\end{enumerate}
\end{proposition}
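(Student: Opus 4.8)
The plan is to treat the three maps uniformly. For each candidate $\mathfrak{b}$ I would first check that it is $\Ad(H)$-invariant, which by Theorem \ref{ILCC} immediately upgrades it to a genuine $G$-invariant contravariant connection on $(G/H,\pi)$, and then read off its torsion (and, for the first map, its curvature) directly from the algebraic formulas of Theorem \ref{TorCur}, so that no geometric computation remains. The invariance of the map in item~1 follows from assertion~1 of Lemma \ref{Canonical connections} after interchanging $\eta$ and $\xi$, which yields $\mathfrak{b}(\Ad_a^*\eta,\Ad_a^*\xi)=-(\Ad_a^*\xi)\circ l_{(\Ad_a^*\eta)^\#}=-\Ad_a^*(\xi\circ l_{\eta^\#})=\Ad_a^*\mathfrak{b}(\eta,\xi)$; the invariance of the map in item~2 is exactly assertion~3 of the same lemma; and item~3 is trivially invariant.

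For the torsion I would substitute into formula (\ref{Torsion}). For item~1 this produces $\eta\circ l_{\xi^\#}-\xi\circ l_{\eta^\#}-[\eta,\xi]_r$, which vanishes by assertion~2 of Lemma \ref{Canonical connections}; for item~2 it produces $\tfrac12[\eta,\xi]_r-\tfrac12[\xi,\eta]_r-[\eta,\xi]_r$, which vanishes by antisymmetry of $[\cdot,\cdot]_r$. Item~3 is recorded only for contrast, its torsion being $-[\eta,\xi]_r$, in analogy with the canonical (torsionful, flat) connection of the covariant theory. The one point I would be careful about here is keeping the sign conventions of $[\cdot,\cdot]_r$ in (\ref{reductive equation}), of $\ad^*$, and of $l_{\eta^\#}$ mutually consistent, since this is precisely where the cancellations occur.

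For the ``moreover'' clause of item~1 the goal is to recognize $\mathfrak{b}$ as a compatible left-symmetric (pre-Lie) product on $((\mathfrak{m}^*)^H,[\cdot,\cdot]_r)$. Invariance ensures $\mathfrak{b}$ restricts to a product on $(\mathfrak{m}^*)^H$, and the torsion computation already shows that its commutator equals $[\cdot,\cdot]_r$, which is the compatibility. What remains is left-symmetry, which by the curvature formula (\ref{Curvature}) is equivalent to $R\equiv 0$ on $(\mathfrak{m}^*)^H$, i.e. to $[\mathfrak{b}_\eta,\mathfrak{b}_\xi]=\mathfrak{b}_{[\eta,\xi]_r}$. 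Writing $\mathfrak{b}_\eta=-(\,\cdot\,)\circ l_{\eta^\#}$, the operator $[\mathfrak{b}_\eta,\mathfrak{b}_\xi]$ sends $\omega\mapsto\omega\circ(l_{\xi^\#}l_{\eta^\#}-l_{\eta^\#}l_{\xi^\#})$, while $\mathfrak{b}_{[\eta,\xi]_r}$ sends $\omega\mapsto-\omega\circ l_{[\eta,\xi]_r^\#}$; invoking the $r$-matrix identity (\ref{eq reduc}) in the form $[\eta,\xi]_r^\#=[\eta^\#,\xi^\#]_\mathfrak{m}$ then turns the required equality into the Jacobi identity for $[\cdot,\cdot]_\mathfrak{m}$ evaluated on $\eta^\#,\xi^\#\in\mathfrak{m}^H$ and an arbitrary test vector $u\in\mathfrak{m}$.

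The hard part is exactly this last reduction, because $[\cdot,\cdot]_\mathfrak{m}$ is a Lie bracket only on $\mathfrak{m}^H$, not on all of $\mathfrak{m}$, so the $\mathfrak{m}$-Jacobiator of $\eta^\#,\xi^\#,u$ need not vanish. I would resolve it by using that for connected $H$ the invariant vectors $\eta^\#,\xi^\#\in\mathfrak{m}^H$ centralize $\mathfrak{h}$; expanding the full Jacobi identity of $\mathfrak{g}$ and projecting onto $\mathfrak{m}$ then shows that this $\mathfrak{m}$-Jacobiator reduces to a single correction term of the shape $-[[\eta^\#,\xi^\#]_\mathfrak{h},u]\in[\mathfrak{h},\mathfrak{m}]$. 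Since any $\zeta\in(\mathfrak{m}^*)^H$ annihilates $[\mathfrak{h},\mathfrak{m}]$ (the infinitesimal invariance $\langle\zeta,[\mathfrak{h},\mathfrak{m}]\rangle=0$), evaluating against $\zeta$ kills the correction term and gives $R(\eta,\xi)\zeta=0$. This establishes left-symmetry and completes the identification of $\mathfrak{b}$ with a compatible left-symmetric product on $((\mathfrak{m}^*)^H,[\cdot,\cdot]_r)$.
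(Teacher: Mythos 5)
The paper offers no proof of this proposition beyond the single sentence ``From Lemma \ref{Canonical connections} we get the following proposition,'' so your write-up necessarily goes further than the source. Your overall strategy --- reduce everything to the algebraic data via Theorem \ref{ILCC} and the torsion/curvature formulas of Theorem \ref{TorCur}, then invoke Lemma \ref{Canonical connections} --- is exactly the intended one, and your treatment of the ``moreover'' clause is the genuinely valuable part: identifying left-symmetry with the vanishing of $R$ on $(\mathfrak{m}^*)^H$ given compatibility, reducing $[\mathfrak{b}_\eta,\mathfrak{b}_\xi]-\mathfrak{b}_{[\eta,\xi]_r}$ via $[\eta,\xi]_r^{\#}=[\eta^{\#},\xi^{\#}]_{\mathfrak{m}}$ to the $\mathfrak{m}$-Jacobiator, and then killing the residual term $[[\eta^{\#},\xi^{\#}]_{\mathfrak{h}},u]$ by pairing against an $H$-invariant covector. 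You are right that one cannot argue the curvature vanishes on all of $\mathfrak{m}^*$; the restriction to $(\mathfrak{m}^*)^H$, where $\eta^{\#},\xi^{\#}$ centralize $\mathfrak{h}$ (for $H$ connected) and $\zeta$ annihilates $[\mathfrak{h},\mathfrak{m}]$, is essential, and the paper says nothing about this.

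The one step that does not go through as written is the torsion cancellation in item~1. You claim $\eta\circ l_{\xi^{\#}}-\xi\circ l_{\eta^{\#}}-[\eta,\xi]_r=0$ ``by assertion~2 of Lemma \ref{Canonical connections}'', but that assertion as stated reads $\eta\circ l_{\xi^{\#}}-\xi\circ l_{\eta^{\#}}=[\xi,\eta]_r$, which would give torsion $-2[\eta,\xi]_r$ rather than $0$. The discrepancy is internal to the paper: with the convention $\langle\ad_x^*\alpha,y\rangle=-\langle\alpha,[x,y]\rangle$ used elsewhere (e.g.\ in the proof of (\ref{rbracket})) together with the definition (\ref{reductive equation}), one indeed gets $[\xi,\eta]_r$, so the torsionless map would have to be $\mathfrak{b}(\eta,\xi)=+\xi\circ l_{\eta^{\#}}$; with the transposed convention $\langle\ad_x^*\alpha,y\rangle=\langle\alpha,[x,y]\rangle$ the final line of the lemma's own proof (which reads $[\eta,\xi]_r$, contradicting the lemma's statement) is the correct one and item~1 holds as stated. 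You flagged the sign sensitivity but then silently resolved it in favour of the proposition; to make the proof airtight you should either recompute $\mathfrak{b}(\eta,\xi)-\mathfrak{b}(\xi,\eta)$ directly from (\ref{reductive equation}) with one fixed convention for $\ad^*$, or state explicitly which version of assertion~2 you are using. Everything else --- the invariance of the three maps, the torsion of items~2 and~3, and the equivalence of left-symmetry with $R=0$ once the commutator of $\mathfrak{b}$ equals $[\cdot,\cdot]_r$ --- is correct.
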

\subsection*{Fedosov contravariant connection}
Recall that a contravariant connection $D$ on a Poisson manifold $(M,\pi)$ is Poisson if and only if, for all one forms $\alpha,\beta,\gamma\in\Omega^1(M)$,
\begin{equation}
	D\pi(\alpha,\beta,\gamma)=\alpha^{\#}.\pi(\beta,\gamma)-\pi(D_{\alpha}\beta,\gamma)-\pi(\beta,D_{\alpha}\gamma)=0.
\end{equation}
Now we consider a $G$-invariant contravariant connection $D$ on $(G/H,\pi)$ and denote by  $\mathfrak{b}:\mathfrak{m}^*\times\mathfrak{m}^*\rightarrow\mathfrak{m}^*$ its associated bilinear map. 
\begin{theorem}
 $D$ is Poisson if and only if, for any $\eta,\xi,\varepsilon\in\mathfrak{m}^*$,
 \begin{equation}\label{Poisson compatibility}
 	r(\mathfrak{b}(\eta,\xi),\varepsilon)+	r(\xi,\mathfrak{b}(\eta,\varepsilon))=0.
 \end{equation}
\end{theorem}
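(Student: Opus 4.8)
The plan is to use $G$-invariance to localise the condition $D\pi=0$ at the base point $\bar{e}$, and then to read off (\ref{Poisson compatibility}) from the explicit description of $D$ in Theorem \ref{ILCC}.

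First I would check that $D\pi$ is a genuine tensor, i.e. $C^{\infty}(G/H)$-linear in each argument. Linearity in $\alpha$ is immediate from property (1) of a contravariant connection, while for $\beta$ (and symmetrically for $\gamma$) the two Leibniz terms produced by $\alpha^{\#}\cdot\big(f\,\pi(\beta,\gamma)\big)$ and by $D_{\alpha}(f\beta)$ cancel, so that $D\pi(\alpha,f\beta,\gamma)=f\,D\pi(\alpha,\beta,\gamma)$ for all $f\in C^{\infty}(G/H)$. Because $\pi$ is $G$-invariant and $D$ is $G$-invariant, the tensor $D\pi$ is $G$-invariant as well; hence $D\pi=0$ if and only if $(D\pi)(\bar{e})=0$, and by tensoriality this only depends on the values $\eta:=\alpha_{\bar{e}}$, $\xi:=\beta_{\bar{e}}$, $\varepsilon:=\gamma_{\bar{e}}\in\mathfrak{m}^{*}$, with $\pi_{\bar{e}}$ identified with $r$ under $T^{*}_{\bar{e}}(G/H)\cong\mathfrak{m}^{*}$.

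Next I would evaluate the three summands of $D\pi(\alpha,\beta,\gamma)(\bar{e})$. Choosing $v\in\mathfrak{g}$ with $v^{*}_{\bar{e}}=\alpha^{\#}_{\bar{e}}$ as in the proof of Lemma \ref{bracketlemma}, the first summand is a directional derivative along the fundamental field $v^{*}$; writing it as a Lie derivative and using the $G$-invariance $\mathcal{L}_{v^{*}}\pi=0$ together with the Leibniz rule for $\mathcal{L}_{v^{*}}\big(\pi(\beta,\gamma)\big)$ gives
\[
\alpha^{\#}_{\bar{e}}\cdot\pi(\beta,\gamma)=r\big((\mathcal{L}_{v^{*}}\beta)(\bar{e}),\varepsilon\big)+r\big(\xi,(\mathcal{L}_{v^{*}}\gamma)(\bar{e})\big).
\]
The crucial identity is $(\mathcal{L}_{v^{*}}\beta)(\bar{e})=\alpha^{\#}_{\bar{e}}\cdot F^{\beta}$: since the flow of $v^{*}$ is $\bar{g}\mapsto\overline{\exp(-tv)}\cdot\bar{g}$ and $F^{\beta}(g)=(T_{\bar{e}}\lambda_{g})^{*}\beta_{\bar{g}}$, the pullback $(\lambda^{*}_{\exp(-tv)}\beta)_{\bar{e}}$ equals $F^{\beta}(\exp(-tv))$, whose derivative at $t=0$ is exactly $\alpha^{\#}_{\bar{e}}\cdot F^{\beta}$ (this is the same computation underlying Lemma \ref{bracketlemma}). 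Substituting the formula $(D_{\alpha}\beta)(\bar{e})=\mathfrak{b}(\eta,\xi)+\alpha^{\#}_{\bar{e}}\cdot F^{\beta}$ of Theorem \ref{ILCC} into the remaining summands $-r\big((D_{\alpha}\beta)(\bar{e}),\varepsilon\big)$ and $-r\big(\xi,(D_{\alpha}\gamma)(\bar{e})\big)$, the four terms carrying $\alpha^{\#}_{\bar{e}}\cdot F^{\beta}$ and $\alpha^{\#}_{\bar{e}}\cdot F^{\gamma}$ cancel in pairs, leaving
\[
D\pi(\alpha,\beta,\gamma)(\bar{e})=-\Big(r\big(\mathfrak{b}(\eta,\xi),\varepsilon\big)+r\big(\xi,\mathfrak{b}(\eta,\varepsilon)\big)\Big).
\]

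Finally I would conclude that $D$ is Poisson, i.e. $D\pi\equiv0$, precisely when the right-hand side vanishes for all $\eta,\xi,\varepsilon\in\mathfrak{m}^{*}$, which is exactly (\ref{Poisson compatibility}). The only delicate point is the middle step, namely matching the Lie-derivative term $(\mathcal{L}_{v^{*}}\beta)(\bar{e})$ with the directional-derivative term $\alpha^{\#}_{\bar{e}}\cdot F^{\beta}$ appearing in the formula for $D$ so that the non-tensorial contributions cancel; once this identification is made (it is essentially recorded in the proof of Lemma \ref{bracketlemma}), both the cancellation and the final equivalence are immediate.
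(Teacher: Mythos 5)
Your argument is correct and follows essentially the same route as the paper: reduce to the point $\bar{e}$ by $G$-invariance, absorb the derivative term $\alpha^{\#}\cdot\pi(\beta,\gamma)$ via $\mathcal{L}_{v^{*}}\pi=0$ into $\pi(\mathcal{L}_{v^{*}}\beta,\gamma)+\pi(\beta,\mathcal{L}_{v^{*}}\gamma)$, identify $(\mathcal{L}_{v^{*}}\beta)(\bar{e})$ with $\alpha^{\#}_{\bar{e}}\cdot F^{\beta}$, and cancel against the corresponding term in the formula for $D$ from Theorem \ref{ILCC}. The paper compresses these steps into the single line $D\pi(\alpha,\beta,\gamma)(\bar{e})=-\pi(D_{\alpha}\beta-\mathcal{L}_{u^{*}}\beta,\gamma)(\bar{e})-\pi(\beta,D_{\alpha}\gamma-\mathcal{L}_{u^{*}}\gamma)(\bar{e})$, so your write-up only makes explicit (tensoriality of $D\pi$, the flow computation) what the paper leaves implicit.
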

\begin{proof} From the $G$-invariance of $D$ and $\pi$ it follows that the tensor $D\pi$ is $G$-invariant. So lets compute the value of $D\pi$ at $\bar{e}$. Let $\alpha,\beta,\gamma\in\Omega^1(G/H)$ and let $u\in\mathfrak{g}$ such that $u^{*}_{\bar{e}}=\alpha^{\#}$, then we have
	\begin{align*}
		D\pi(\alpha,\beta,\gamma)(\bar{e})&=-\pi(D_{\alpha}\beta-\mathcal{L}_{u^*}\beta,\gamma)(\bar{e})-\pi(\beta,D_{\alpha}\gamma-\mathcal{L}_{u^*}\gamma)(\bar{e})\\
		&=-\pi((D_{\alpha}\beta)_{\bar{e}}-\alpha^{\#}_{\bar{e}}\cdot F^{\beta},\gamma_{\bar{e}})-\pi(\beta_{\bar{e}},(D_{\alpha}\gamma)_{\bar{e}}-\alpha^{\#}_{\bar{e}}\cdot F^{\gamma})\\
		&=-r(\mathfrak{b}(\alpha_{\bar{e}},\beta_{\bar{e}}),\gamma_{\bar{e}})-r(\beta_{\bar{e}},\mathfrak{b}(\alpha_{\bar{e}},\gamma_{\bar{e}})).
	\end{align*}
\end{proof}

\begin{example}[Riemannian contravariant connection]
A \emph{Riemannian Poisson manifold} is a smooth manifold $(M, \pi, \langle\cdot,\cdot\rangle)$ equipped with a Poisson structure $\pi \in \Gamma(\wedge^2 TM)$ and a Riemannian metric $\langle\cdot,\cdot\rangle$ such that the pair $(\pi, \langle\cdot,\cdot\rangle)$ are compatible in the sense of \cite{Boucetta}. Specifically, this means $D\pi = 0$, where $D$ is the Levi-Civita contravariant connection given by
\begin{align*}
	2\langle D_{\alpha}\beta,\gamma \rangle^* &= \alpha^{\#}\cdot\langle \beta,\gamma \rangle^* 
	+ \beta^{\#}\cdot\langle \alpha,\gamma\rangle^* 
	- \gamma^{\#}\cdot\langle \alpha,\beta\rangle^*  \\
	&\quad + \langle [\alpha^{\#}, \beta^{\#}]_\pi, \gamma\rangle^* 
	+ \langle [\gamma^{\#}, \beta^{\#}]_\pi, \alpha\rangle^* 
	+ \langle [\gamma^{\#}, \alpha^{\#}]_\pi, \beta\rangle^*,
\end{align*}
for all \(\alpha, \beta, \gamma \in \Omega^1(M)\), where $\langle\cdot,\cdot\rangle^*$ is the co-metric associated with the Riemannian metric $\langle\cdot,\cdot\rangle$.

In the case where $M = G/H$ is a reductive homogeneous space, and $\pi$ and $\langle\cdot,\cdot\rangle$ are both $G$-invariant, the Levi-Civita contravariant connection $D$ will also be $G$-invariant. Consequently, $(\pi, \langle\cdot,\cdot\rangle)$ are compatible if and only if the associated bilinear map $\mathfrak{b}$ satisfies Equation (\ref{Poisson compatibility}). In particular, in the case of a Lie group, we recover Proposition 2.1 in \cite{Alioune}.
\end{example}
Recall that a Fedosov connection on a symplectic manifold $(M,\omega)$ is a torsionless connection $\nabla$ such that $\nabla\omega=0$. Analogously to the covariant case we have.
\begin{definition}
	A  Fedosov contravariant connection on a Poisson manifold $(M,\pi)$ is a torsionless Poisson connection.
\end{definition}
As natural example of Fedosov contravariant connection on $(G/H,\pi)$ we have.
\begin{proposition}
The bilinear map
	\[ \mathfrak{b}(\eta,\xi)=\frac{1}{3}\left([\eta,\xi]_r-\xi\circ l_{\eta^{\#}}\right),\]
	for $\eta,\xi\in\mathfrak{m}^*$, defines a $G$-invariant  Fedosov contravariant connection on $(G/H,\pi)$.
\end{proposition}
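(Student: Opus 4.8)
The plan is to check the two defining properties of a Fedosov connection---torsion-freeness and Poisson compatibility---for the connection $D$ that Theorem~\ref{ILCC} attaches to $\mathfrak{b}$, after first confirming that $\mathfrak{b}$ really is $\mathrm{Ad}(H)$-invariant. The invariance is immediate: the term $[\eta,\xi]_r$ is $\mathrm{Ad}(H)$-equivariant by Lemma~\ref{Canonical connections}(3) and the term $\xi\circ l_{\eta^{\#}}$ by Lemma~\ref{Canonical connections}(1), so their linear combination $\mathfrak{b}(\eta,\xi)=\tfrac13\bigl([\eta,\xi]_r-\xi\circ l_{\eta^{\#}}\bigr)$ is $\mathrm{Ad}(H)$-invariant, and Theorem~\ref{ILCC} then yields a genuine $G$-invariant contravariant connection $D$.

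For torsion-freeness I would substitute $\mathfrak{b}$ into the torsion formula~(\ref{Torsion}). Using the skew-symmetry of $[\cdot,\cdot]_r$ together with the relation between $\eta\circ l_{\xi^{\#}}-\xi\circ l_{\eta^{\#}}$ and $[\eta,\xi]_r$ furnished by Lemma~\ref{Canonical connections}(2), the antisymmetric part collapses to
\[
\mathfrak{b}(\eta,\xi)-\mathfrak{b}(\xi,\eta)=\tfrac13\bigl(2[\eta,\xi]_r+(\eta\circ l_{\xi^{\#}}-\xi\circ l_{\eta^{\#}})\bigr)=[\eta,\xi]_r,
\]
so that $T(\eta,\xi)=\mathfrak{b}(\eta,\xi)-\mathfrak{b}(\xi,\eta)-[\eta,\xi]_r=0$. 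This is exactly the cancellation that already makes the natural and the left-symmetric invariant connections torsion-free, so nothing new is needed here.

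The substantive step---and the main obstacle---is the Poisson property, which by the Poisson-compatibility criterion reduces to verifying~(\ref{Poisson compatibility}): $r(\mathfrak{b}(\eta,\xi),\varepsilon)+r(\xi,\mathfrak{b}(\eta,\varepsilon))=0$ for all $\eta,\xi,\varepsilon\in\mathfrak{m}^*$. I would expand $3\bigl(r(\mathfrak{b}(\eta,\xi),\varepsilon)+r(\xi,\mathfrak{b}(\eta,\varepsilon))\bigr)$ into four terms and rewrite each in the single normal form $\langle\,\cdot\,,[\eta^{\#},\cdot^{\#}]_{\mathfrak{m}}\rangle$, using the skew bilinear form $r(\alpha,\beta)=\langle\beta,\alpha^{\#}\rangle$, the defining property $\langle\zeta\circ l_{\eta^{\#}},u\rangle=\langle\zeta,[\eta^{\#},u]_{\mathfrak{m}}\rangle$ of $l_{\eta^{\#}}$, and---crucially---the $r$-matrix equation~(\ref{eq reduc}), $[\alpha,\beta]_r^{\#}=[\alpha^{\#},\beta^{\#}]_{\mathfrak{m}}$, guaranteed by Theorem~\ref{Poisson reductive}. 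The two bracket contributions then read $\langle\varepsilon,[\eta^{\#},\xi^{\#}]_{\mathfrak{m}}\rangle$ and $-\langle\xi,[\eta^{\#},\varepsilon^{\#}]_{\mathfrak{m}}\rangle$, while the two $l$-contributions read $\langle\xi,[\eta^{\#},\varepsilon^{\#}]_{\mathfrak{m}}\rangle$ and $-\langle\varepsilon,[\eta^{\#},\xi^{\#}]_{\mathfrak{m}}\rangle$, so the bracket term of each slot cancels the $l$-term of the other and the total vanishes. I expect the delicate point to be the sign bookkeeping, together with the structural observation that the two pieces must enter with equal weight for this cancellation to occur: the bracket piece alone (the natural connection) and the $l$-piece alone (the left-symmetric connection) are each torsion-free but \emph{not} Poisson, and it is exactly the equal weighting---with the common coefficient then pinned to $\tfrac13$ by torsion-freeness---that singles out the Fedosov connection. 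Once~(\ref{Poisson compatibility}) holds, $D$ is simultaneously torsion-free and Poisson, hence a $G$-invariant Fedosov contravariant connection.
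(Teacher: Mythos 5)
Your proof follows the paper's own argument essentially verbatim: $\mathrm{Ad}(H)$-invariance via Lemma \ref{Canonical connections}, the torsion computation via its second item, and the verification of (\ref{Poisson compatibility}) by rewriting all four terms as pairings against $[\eta^{\#},\cdot\,]_{\mathfrak{m}}$ using the $r$-matrix equation, with exactly the cross-cancellation you describe. The one point to flag is that your torsion step (like the paper's) uses $\eta\circ l_{\xi^{\#}}-\xi\circ l_{\eta^{\#}}=[\eta,\xi]_r$, whereas Lemma \ref{Canonical connections}(2) as \emph{stated} gives $[\xi,\eta]_r$; this sign discrepancy already exists between that lemma's statement and its own proof in the paper, and both your argument and the paper's implicitly adopt the proof's version, so you are consistent with the source rather than introducing a new gap.
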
 
\begin{proof}
	Let $\eta,\xi,\varepsilon\in\mathfrak{m}^*$,
	\begin{align*}
		r(\mathfrak{b}(\eta,\xi),\varepsilon)&=\frac{1}{3}\left(r([\eta,\xi]_r,\varepsilon)-r(\xi\circ l_{\eta^{\#}},\varepsilon)\right)\\
		&=\frac{1}{3}\left(\langle \varepsilon,[\eta^{\#},\xi^{\#}]_{\mathfrak{m}}\rangle+\langle \xi,[\eta^{\#},\varepsilon^{\#}]_{\mathfrak{m}}\rangle\right)\\
		&=\frac{1}{3}\left(\langle \ad_{\xi^{\#}}^*\widetilde{\varepsilon},\eta^{\#}\rangle+\langle \ad_{\varepsilon^{\#}}^*\widetilde{\xi},\eta^{\#}\rangle\right)
	\end{align*}
In the same way we have
\[ r(\xi,\mathfrak{b}(\eta,\varepsilon))=-\frac{1}{3}\left(\langle \ad_{\varepsilon^{\#}}^*\widetilde{\xi},\eta^{\#}\rangle+\langle \ad_{\xi^{\#}}^*\widetilde{\varepsilon},\eta^{\#}\rangle\right).\]
Hence
\[ 	r(\mathfrak{b}(\eta,\xi),\varepsilon)+	r(\xi,\mathfrak{b}(\eta,\varepsilon))=0.\]
From the second point of Lemma \ref{Canonical connections} we get
\begin{align*}
	\mathfrak{b}(\eta,\xi)-\mathfrak{b}(\xi,\eta)&=\frac{2}{3}[\eta,\xi]_r-\xi\circ l_{\eta^{\#}}+\eta\circ l_{\xi^{\#}}\\
	&=[\eta,\xi]_r.
\end{align*}
\end{proof}

In general, there is no way to construct a covariant connection on the symplectic foliation arising from a contravariant connection. However, in our context, we will construct a covariant connection on the regular symplectic foliation induced by a an invaraiant contravariant connection.\\
Suppose that the pair $(G, H)$ is a reductive pair with the decomposition $\mathfrak{g} = \mathfrak{h} \oplus \mathfrak{m}$. Let $D$ be a $G$-invariant contravariant connection on $(G/H, \pi)$, and denote by $\mathfrak{b} : \mathfrak{m}^* \times \mathfrak{m}^* \to \mathfrak{m}^*$ its associated bilinear map. 

For any $u \in \mathfrak{m}$, denote by $u_r \in \mathrm{Im}(r_{\#})$ the component of $u$ in $\mathrm{Im}(r_{\#})$. For any $v \in \mathrm{Im}(r_{\#})$, denote by $\eta_v \in \mathfrak{m}^*$ the linear form defined by
$\prec \eta_v, u \succ = \omega_r(v, u_r)$.
\begin{theorem}
	The bilinear map $\mathfrak{b}^r : \mathrm{Im}(r_{\#}) \times \mathrm{Im}(r_{\#}) \to \mathrm{Im}(r_{\#})$ defined by
	\[
	\mathfrak{b}^r(u, v) = \mathfrak{b}(\eta_u, \eta_v)^{\#}
	\]
	is well-defined and $\mathrm{Ad}(H)$-invariant. Hence, it defines a $A_r$-invariant covariant connection $\nabla^r$ on the reductive pair $(A_r, H)$. Moreover, 
	\begin{enumerate}
		\item If $D$ is torsionless, then $\nabla^r$ is torsionless.
		\item If $D$ is a Poisson connection, then $\nabla^r$ is a symplectic connection.
		\item If $D$ is a Fedosov contravariant connection, then $\nabla^r$ is a Fedosov connection.
		\item If the curvature of $D$ vanishes, then  the curvature of $\nabla^r$ vanishes if and only, for all $u,v,w\in\mathrm{Im}(r_{\#})$, 
		\[ [[u,v]_{\mathfrak{h}},w]=0.\]
	\end{enumerate}
\end{theorem}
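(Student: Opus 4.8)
\section*{Proof proposal}

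The plan is to run all four assertions through the algebraic dictionary between $D$ and $\mathfrak{b}$ from Theorems \ref{ILCC} and \ref{TorCur}, resting on two structural facts that do most of the work. First, the defining relation $\prec\eta_v,u\succ=\omega_r(v,u_r)$ forces $r_{\#}(\eta_v)=-v$ for every $v\in\mathrm{Im}(r_{\#})$: this is a one-line computation pairing against an arbitrary $\gamma$, using skew-symmetry of $r$ and the reductive identity $\omega_r(r_{\#}\alpha,r_{\#}\beta)=r(\alpha,\beta)$, so that $\Theta\colon v\mapsto\eta_v$ is a right inverse of $-r_{\#}$ landing in $W^{\circ}$, where $W$ is the chosen complement $\mathfrak{m}=\mathrm{Im}(r_{\#})\oplus W$. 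Second, $\Theta$ is $\mathrm{Ad}(H)$-equivariant, i.e. $\eta_{\mathrm{Ad}_a v}=\mathrm{Ad}_a^{*}\eta_v$, which follows from the $\mathrm{Ad}(H)$-invariance of $\omega_r$ (Proposition \ref{2-cocycle}) and of $\mathrm{Im}(r_{\#})$, once $W$ is taken $\mathrm{Ad}(H)$-invariant. Combining equivariance of $\Theta$ with the $\mathrm{Ad}(H)$-invariance of $\mathfrak{b}$ from Theorem \ref{ILCC} shows at once that $\mathfrak{b}^r(u,v)=\mathfrak{b}(\eta_u,\eta_v)^{\#}$ lands in $\mathrm{Im}(r_{\#})$ and is $\mathrm{Ad}(H)$-invariant; Nomizu's theorem applied to the reductive pair $(A_r,H)$ with $\mathfrak{a}_r=\mathfrak{h}\oplus\mathrm{Im}(r_{\#})$ (Proposition \ref{leaf spaces}) then produces the $A_r$-invariant connection $\nabla^r$.

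For the torsion I would feed $\mathfrak{b}^r$ into Nomizu's formula $T^{\nabla^r}(u,v)=\mathfrak{b}^r(u,v)-\mathfrak{b}^r(v,u)-[u,v]_{\mathrm{Im}(r_{\#})}$, note that $[u,v]_{\mathrm{Im}(r_{\#})}=[u,v]_{\mathfrak{m}}$ since $\mathrm{Im}(r_{\#})\subset\mathfrak{m}$ shares the $\mathfrak{h}$-complement direction, and combine $[\eta_u,\eta_v]_r^{\#}=[\eta_u^{\#},\eta_v^{\#}]_{\mathfrak{m}}=[u,v]_{\mathfrak{m}}$ from the $r$-matrix equation (\ref{eq reduc}) with the torsion expression of Theorem \ref{TorCur}. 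This collapses to $T^{\nabla^r}(u,v)=T(\eta_u,\eta_v)^{\#}$, giving assertion 1. For assertion 2 I would write $\nabla^r\omega_r=0$ as $\omega_r(\mathfrak{b}^r(u,v),w)+\omega_r(v,\mathfrak{b}^r(u,w))=0$ and, using $\omega_r(\alpha^{\#},\beta^{\#})=r(\alpha,\beta)$ together with $r_{\#}\eta_v=-v$, recognize the left side as $-\big(r(\mathfrak{b}(\eta_u,\eta_v),\eta_w)+r(\eta_v,\mathfrak{b}(\eta_u,\eta_w))\big)$, which vanishes for all $u,v,w$ as soon as $D$ is Poisson by (\ref{Poisson compatibility}). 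Assertion 3 is then immediate: a Fedosov $D$ is torsionless and Poisson, so by assertions 1 and 2 the connection $\nabla^r$ is torsionless and symplectic, i.e. Fedosov.

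Assertion 4 is where the real work lies. Feeding $\mathfrak{b}^r$ into Nomizu's curvature formula $R^{\nabla^r}(u,v)w=\mathfrak{b}^r_u\mathfrak{b}^r_v w-\mathfrak{b}^r_v\mathfrak{b}^r_u w-\mathfrak{b}^r_{[u,v]_{\mathrm{Im}(r_{\#})}}w-[[u,v]_{\mathfrak{h}},w]$ and expanding each composite through the definition, I would use the identity $[\mathfrak{b}_{\eta},\mathfrak{b}_{\xi}]=\mathfrak{b}_{[\eta,\xi]_r}$ (this is $R=0$ in Theorem \ref{TorCur}) to cancel the principal parts of the first three terms against $r_{\#}\big(R(\eta_u,\eta_v)\eta_w\big)=0$. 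The delicate point, and the step I expect to be the main obstacle, is that $\mathfrak{b}^r$ is built from the lift $\Theta$, which is only a right inverse of $-r_{\#}$: one has $\Theta\circ r_{\#}=-\mathrm{pr}_{W^{\circ}}$ rather than $-\mathrm{id}$, so that re-lifting the intermediate vector $\mathfrak{b}^r(v,w)\in\mathrm{Im}(r_{\#})$ through $\Theta$ produces correction terms of the shape $r_{\#}\mathfrak{b}(\eta_u,k)$ with $k\in\ker r_{\#}=\mathrm{Im}(r_{\#})^{\circ}$. I would devote the bulk of the argument to showing that these $\ker r_{\#}$-supported corrections assemble to zero once $R=0$ is imposed on all of $\mathfrak{m}^{*}$ (not merely on $W^{\circ}$), reorganizing them through (\ref{eq reduc}) and the representation identity. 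Once this cancellation is secured one is left with $R^{\nabla^r}(u,v)w=-[[u,v]_{\mathfrak{h}},w]$, whence $R^{\nabla^r}\equiv0$ if and only if $[[u,v]_{\mathfrak{h}},w]=0$ for all $u,v,w\in\mathrm{Im}(r_{\#})$. Controlling these correction terms cleanly, rather than the essentially mechanical translation used for the torsion, symplectic, and Fedosov statements, is the part that needs the most care.
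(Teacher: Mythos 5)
Your treatment of the well-definedness, the $\Ad(H)$-invariance, and assertions 1--3 is correct, and it is considerably more careful than the paper's own proof, which only verifies the equivariance $\eta_{\Ad_a(v)}=\Ad_a^*\eta_v$ and declares the four numbered assertions ``obvious.'' Your key lemma $r_{\#}(\eta_v)=-v$ is right: for $u=r_{\#}\beta$ one has $\langle\eta_v,u\rangle=\omega_r(r_{\#}\alpha,r_{\#}\beta)=r(\alpha,\beta)=\langle\beta,v\rangle$, hence $\langle\beta,r_{\#}\eta_v\rangle=-\langle\eta_v,r_{\#}\beta\rangle=-\langle\beta,v\rangle$; the two minus signs cancel in $[u,v]=[\eta_u^{\#},\eta_v^{\#}]_{\mathfrak m}=[\eta_u,\eta_v]_r^{\#}$, and the torsion and $\nabla^r\omega_r$ computations then collapse onto (\ref{Torsion}) and (\ref{Poisson compatibility}) exactly as you describe. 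You are also right -- and the paper is silent on this -- that the complement $W$ of $\mathrm{Im}(r_{\#})$ in $\mathfrak m$ must be chosen $\Ad(H)$-invariant for the step $\omega_r(u,\Ad_{a^{-1}}(v_r))=\omega_r(u,(\Ad_{a^{-1}}v)_r)$ to be legitimate.

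The gap is assertion 4, which you have diagnosed accurately but not closed. Since $\Theta\colon v\mapsto\eta_v$ satisfies $\Theta\circ r_{\#}=-(\mathrm{id}-P)$ with $P$ the projection onto $\ker r_{\#}$ along $W^{\circ}$, expanding Nomizu's curvature formula gives
\begin{equation*}
R^{\nabla^r}(u,v)w=-r_{\#}\bigl(R(\eta_u,\eta_v)\eta_w\bigr)-[[u,v]_{\mathfrak{h}},w]+C,
\end{equation*}
where
\begin{equation*}
C=r_{\#}\mathfrak{b}\bigl(\eta_u,P\mathfrak{b}(\eta_v,\eta_w)\bigr)-r_{\#}\mathfrak{b}\bigl(\eta_v,P\mathfrak{b}(\eta_u,\eta_w)\bigr)-r_{\#}\mathfrak{b}\bigl(P[\eta_u,\eta_v]_r,\eta_w\bigr),
\end{equation*}
and the stated equivalence requires $C=0$. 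The hypothesis $R=0$, i.e.\ $[\mathfrak{b}_\eta,\mathfrak{b}_\xi]=\mathfrak{b}_{[\eta,\xi]_r}$, constrains the composites $\mathfrak{b}_\eta\mathfrak{b}_\xi$ but says nothing separately about the values $\mathfrak{b}(\eta,k)$ or $\mathfrak{b}(k,\eta)$ for $k\in\ker r_{\#}$, so no identity available in the paper forces the three terms of $C$ to cancel; your stated intention to ``reorganize them through (\ref{eq reduc})'' is a plan, not an argument, and you say as much yourself. Even for an $\mathcal{F}$-connection (where $\mathfrak{b}_k=0$ kills the third term) the first two terms survive, since $P(\cdot)$ sits in the second slot of $\mathfrak{b}$ there. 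Until you either prove $C=0$ or isolate an extra hypothesis under which it holds, your proof of assertion 4 -- like the paper's -- is incomplete.
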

\begin{proof}
	Let $u,v,w\in\mathrm{Im}(r_{\#})$ and $a\in H$,
	\begin{align*}
	\prec \eta_{\Ad_a(u)}, v \succ &= \omega_r(\Ad_a(u), v_r)\\
	&=\omega_r(u, \Ad_{a^{-1}}(v_r))\\
	&=\prec \eta_{u}, \Ad_{a^{-1}}(v) \succ\\
	&= \prec \Ad_{a}^{*}\eta_{u}, v \succ.
	\end{align*} 
	Hence, 
	\begin{align*}
	\mathfrak{b}^r(\Ad_a(u), \Ad_a(v)) &= \mathfrak{b}(\eta_{\Ad_a(u)}, \eta_{\Ad_a(v)})^{\#}\\
	&= \mathfrak{b}(\Ad_{a}^{*}\eta_{u}, \Ad_{a}^{*}\eta_{v})^{\#}\\
	&= (\Ad_{a}^{*}\mathfrak{b}(\eta_{u},\eta_{v}))^{\#}\\
	&=\Ad_{a}(\mathfrak{b}(\eta_{u},\eta_{v})^{\#}) \\
	&= \Ad_{a}(\mathfrak{b}^r(\eta_{u},\eta_{v})).
	\end{align*}
	Thus $\mathfrak{b}^r$ is $A_r$-invariant.
	
	The others assertions are obvious.
\end{proof}

\bibliographystyle{amsplain}

\end{document}